\documentclass[12pt]{article}
\usepackage[margin=0.75in]{geometry}
\usepackage{amsmath,amssymb,amsthm}
\usepackage{mathtools}
\usepackage{booktabs}
\usepackage{graphicx}
\usepackage{paralist}
\usepackage{placeins}
\usepackage[colorlinks=true]{hyperref}
\hypersetup{urlcolor=blue, citecolor=red}
\allowdisplaybreaks
\newtheorem{theorem}{Theorem}[section]
\newtheorem{corollary}[theorem]{Corollary}

\newtheorem{proposition}[theorem]{Proposition}
\newtheorem{conjecture}[theorem]{Conjecture}

\theoremstyle{definition}
\newtheorem{definition}[theorem]{Definition}
\newtheorem{remark}[theorem]{Remark}

\title{What remains when integrability fails? Mathematical structure in the K$(m,m)$ compacton equations}
\author{James M.\ Hyman\thanks{Department of Mathematics, Tulane University, New Orleans, LA 70118, USA.
Email: \href{mailto:mhyman@tulane.edu}{\texttt{mhyman@tulane.edu}}.}}
\date{Published in
\emph{Discrete and Continuous Dynamical Systems, Series A},\\
Peter D.\ Lax Centennial Memorial Volume,
\href{https://doi.org/10.3934/dcds.2026.105536}{doi:10.3934/dcds.2026.105536}.\\[4pt]
{\small Author's version, with the corrections made at proof.}}

\begin{document}

\maketitle
\begin{center}
\textit{Dedicated to the memory of Peter D.\ Lax (1926--2025),\\
my teacher and friend, who introduced me to soliton theory\\
and whose search for organizing principles shaped a generation of mathematics}
\end{center}

\bigskip

\begin{abstract}
Peter Lax's 1968 discovery that certain nonlinear wave equations
possess operator pairs (Lax pairs) opened the theory of
completely integrable systems, yielding exact solutions, infinite
conservation hierarchies, and perfectly elastic soliton collisions.
When a nonlinear dispersive equation is provably
non-integrable (admitting no Lax pair and only finitely many
conservation laws), what mathematical structure governs its dynamics?

The K(m,m) compacton equations, $u_t + (u^m)_x + (u^m)_{xxx} = 0$
for $m \geq 2$, provide a testing ground: they support traveling
waves with compact support, yet fail every known
integrability test.  We prove universal algebraic formulas
determining the shape, width, and speed of every K(m,m)
compacton.  We develop a constraint-deficit mechanism explaining how
the five independent conservation laws constrain emergent
amplitudes and suppress the fundamental Fourier mode of the collision
debris, while higher radiation modes remain unconstrained.
The mechanism's conclusions rest on explicit hypotheses
supported by numerical evidence.
A phase-plane analysis reveals the compacton as a singular limit
of smooth solitary waves through simultaneous regularity loss
and support compactification.
These results delineate intermediate territory between integrable
order and generic nonlinear dynamics, organized by scaling
universality and by the constraint deficit of a finite
conservation structure.
\end{abstract}

\medskip
\noindent\textbf{Keywords:} Compactons, nonlinear dispersive equations, K(m,n) equations, conservation laws, non-integrability, Hamiltonian structure, quasi-elastic collisions, constraint deficit, singular limits, periodic waves.

\smallskip
\noindent\textbf{Mathematics Subject Classification (2020):} Primary 35Q53; Secondary 37K10, 35C07, 35B40, 35B65, 37J35.

\section{Introduction}
\label{sec:introduction}

When does a nonlinear dispersive PDE possess systematic mathematical
structure? For three decades following Lax's 1968 discovery of
operator-pair compatibility conditions for the Korteweg--de Vries
equation~\cite{lax1968integrals,korteweg1895change}, the answer was
clear: when the equation is completely integrable. The inverse
scattering transform (IST) gave integrable systems a unified theory
producing exact multi-soliton solutions, infinite conservation
hierarchies, and algebraically determined collision
outcomes~\cite{gardner1967method,ablowitz1981solitons}. Equations
outside the integrable class were treated as generic nonlinear
problems requiring ad hoc numerical investigation.
Often implicitly, the field operated with a dichotomy: integrable
equations possess mathematical structure, and everything else does
not. This paper presents evidence that the dichotomy is incomplete.

Our evidence comes from the K(m,n) family of nonlinear dispersive
equations,
\begin{equation}\label{eq:kmn}
u_t + (u^m)_x + (u^n)_{xxx} = 0,
\end{equation}
introduced in 1993 \cite{rosenau1993compactons}.
Rosenau was the first to conceive the K(m,n) equations and to unfold
their special properties, and he went on to make major advances in
understanding them~\cite{olver1996tri,rosenau1994nonlinear,rosenau1997nonanalytic,%
rosenau1998class,rosenau2000compact,rosenau2018compactons}.
The case K(2,1) recovers KdV, and K(3,1) corresponds to the modified
KdV equation after scaling; these are the only integrable members
of the family with a standard polynomial
nonlinearity~\cite{heredero2020nonexistence}.
(Exotic integrable cases with negative exponents, such as
K$(-2,-2)$ and K$(-1/2,-1/2)$, arise in the tri-Hamiltonian duality
of Olver and Rosenau~\cite{olver1996tri} but lie outside the scope
of this paper.)
Equal convective and dispersive exponents ($m = n$) with $m > 1$ are
sufficient for equation~\eqref{eq:kmn} to admit traveling
wave solutions with genuinely compact support: \emph{compactons},
which vanish identically beyond a finite boundary rather than decaying
exponentially.  The mechanism enabling compact support is the
amplitude-dependent dispersion in the term~$(u^m)_{xxx}$, which
degenerates where $u \to 0$ and cannot sustain exponential tails.
The exponent~$m$ need not be an integer for compacton existence;
for non-integer~$m$, the absolute-value formulation
$|u|^{m-1}u$ extends the equation to solutions that may change
sign~\cite{garralon2023robustness}.
All K(m,m) equations with real $m > 1$ are provably
non-integrable; on the diagonal $m = n$, compactification and
non-integrability are coextensive.

What is surprising is what these non-integrable equations possess.
K(m,m) compactons satisfy universal algebraic formulas connecting
the three compacton parameters (the edge regularity
exponent~$\alpha$, the width parameter~$\ell$, and the
speed~$c$) to the single
nonlinearity exponent~$m$.  Specifically, K(m,m) compactons
take the form
\[
u(x,t) = A\cos^{\alpha}\!\!\left(\frac{x - ct}{\ell}\right),
\qquad |x - ct| \leq \tfrac{\pi\ell}{2},
\]
vanishing identically outside this interval, where $\alpha$
governs the rate of approach to zero at the support
boundary.  We prove (Theorem~\ref{thm:universal})
that for all $m > 1$:
\begin{equation}\label{eq:formulas-preview}
\alpha = \frac{2}{m-1}, \qquad
\ell = \frac{2m}{m-1}, \qquad
c = \frac{m+1}{2m}\,A^{m-1}.
\end{equation}
These formulas were identified through pattern recognition across
individual cases~$m = 2,\ldots,7$~\cite{rosenau1993compactons,%
rosenau1994nonlinear,cooper2001compacton,rosenau2018compactons},
but their status had remained that of empirical regularities.
The proof establishes them as algebraic consequences of the
cosine-power ansatz and the traveling wave ODE, valid for all
real~$m > 1$.

These universal formulas are one component of a richer mathematical
picture. The K(m,m) equations possess five known independent
conservation laws (exactly four local~\cite{vodova2013complete}
and one nonlocal~\cite{rosenau2018compactons}), a count lying strictly between
the two conservation laws (mass and energy) that generic Hamiltonian
PDEs possess and the infinite hierarchy of integrable systems.
The equations admit single Hamiltonian structure but not the
bi-Hamiltonian pair required for Magri's recursion
operator~\cite{magri1978simple}. Compacton collisions are
quasi-elastic~\cite{rosenau1993compactons,rosenau2018compactons}:
in the collisions computed here the emerging waves keep their
amplitudes to within $0.5\%$, and a small-amplitude debris field
carries the remainder (\S\ref{subsec:quasi-elastic}). This is
not residual integrability. It is a different kind of mathematical
order, governed by a finite conservation structure rather than an
infinite one.

This paper makes three contributions.

\emph{First,} we synthesize the universal formulas, conservation
structure, Hamiltonian geometry, and collision mechanics of K(m,m)
compactons into a coherent mathematical framework. The individual
components have been developed
separately~\cite{rosenau1993compactons,rosenau1994nonlinear,%
vodova2013complete,cooper2001compacton,%
rosenau2018compactons}; bringing them together reveals
their interconnections and the pattern they form.

\emph{Second,} we prove results that sharpen the
framework. Theorem~\ref{thm:universal} provides a unified derivation
of the universal formulas for all~$m > 1$; the individual formulas
appear in the literature~\cite{rosenau1993compactons,rosenau2018compactons},
but their simultaneous emergence from a single traveling-wave
analysis appears to be new.  This derivation yields consequences
that had not been previously identified: a
trigonometric orthogonality property, monotonicity of the mass and
of the translation generator along the compacton family
(Corollary~\ref{cor:VK}), and the exclusivity of the cosine-power
ansatz to the K(m,m) subfamily.
Theorem~\ref{thm:constraint-deficit},
under hypotheses (H1)--(H3) supported by numerical evidence,
identifies a mechanism behind imperfect collision elasticity
in the K(m,m) family:
the five conservation laws constrain emergent amplitudes and
force the fundamental Fourier mode of collision debris to
vanish, but leave all higher radiation modes unconstrained.
Numerical spectral analysis across the family confirms this
prediction: post-collision debris exhibits a characteristic
notch at wavenumber $k = 1$ for all tested values of~$m$.
This constraint-deficit framework is introduced here;
it explains both the
algebraic order (why universal formulas exist) and the
dynamical imperfection (why K(m,m) collisions are near-elastic
but not exactly elastic).

\emph{Third,} we propose a conservation--elasticity framework
(Conjecture~\ref{conj:monotonicity}) relating collision
elasticity to both the conservation law count and the
interaction geometry of solitary waves.  Quantitative comparison
of collisions in K(m,m) and in the Benjamin--Bona--Mahony (BBM)
equation~\cite{benjamin1972model}, using identical methodology, shows
that BBM solitary waves ($N = 3$ known conservation laws, smooth
exponential tails) collide about as elastically as K(m,m) compactons
($N = 5$ conservation laws, compact support) by the amplitude measure,
while shedding more debris energy.  Whether interaction geometry
modulates the role of the conservation law count therefore remains
open.
We also pose eight open problems
defining a research program for investigating mathematical
structure in K(m,m) and in other non-integrable
systems (Appendix~\ref{app:open-problems}).

An arc runs through this paper, from Lax's integrable vision through
its failure for K(m,m) to what replaces it.  The author had the privilege
of being Peter Lax's student, colleague, and friend, and this
contribution continues a conversation about structure in nonlinear
waves that began in his office at the Courant Institute.  Lax's contributions to
nonlinear wave theory extended well beyond the operator-pair
construction~\cite{lax2006hyperbolic}: his broader program sought
organizing principles for nonlinear PDEs wherever such principles
might exist.
For KdV and its integrable relatives, the IST remains the organizing
principle. For K(m,m), a different organizing principle operates:
scaling universality and finite conservation structure produce
systematic, predictable behavior without recourse to
spectral theory.

Figure~\ref{fig:roadmap} provides a conceptual roadmap for the
paper's argument.

\begin{figure}[t]
\centering
\includegraphics[width=\textwidth]{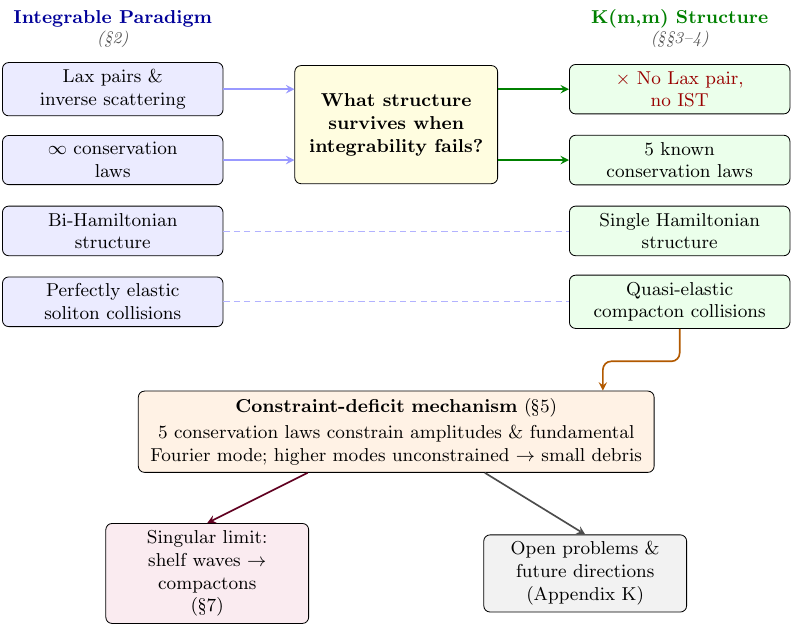}
\caption{Conceptual roadmap.  The integrable paradigm
(left column) provides four structural features that serve as
reference points.  K(m,m) equations lack the first two
(\S\ref{sec:nonintegrability}) but retain analogues of the
remaining two (\S\ref{sec:structure-beyond}).
The constraint-deficit mechanism
(\S\ref{sec:collision-dynamics}) links the finite conservation
count to the quasi-elastic collision dynamics.
The singular limit (\S\ref{sec:shelf-waves}) reveals the
compacton as a degeneration of smooth shelf waves, traveling waves
on a constant background $u = a > 0$.}
\label{fig:roadmap}
\end{figure}

Section~\ref{sec:integrable-paradigm} reviews the integrable
paradigm, presenting Lax pairs, the IST, and the four structural
features of integrable systems that will serve as reference
points throughout.  Section~\ref{sec:nonintegrability} introduces
the K(m,m) compacton family and establishes non-integrability
through four independent lines of evidence.
Section~\ref{sec:structure-beyond} presents the universal formulas
with their proof, the regularity classification, the finite
conservation hierarchy, and Hamiltonian geometry.
Section~\ref{sec:collision-dynamics} develops the collision theory,
constraint-deficit mechanism, and BBM comparison, concluding with
broader implications of the constraint-deficit framework for
non-integrable dispersive PDEs.
Section~\ref{sec:emergence} examines compacton emergence from
localized initial data, demonstrating that the universal speed
formula is recovered to within one percent in dynamics that begin
far from any compacton profile, and identifying a resolution limit
controlled by the boundary regularity exponent $\alpha = 2/(m-1)$.
Section~\ref{sec:shelf-waves} reveals a phase-plane bifurcation
governed by a critical speed and identifies the compacton as a
singular limit of smooth waves, providing a precise connection to Lax's 1975
periodic KdV theory~\cite{lax1975periodic}.
Eight concrete open problems are posed in
Appendix~\ref{app:open-problems}, with extended versions in the
Supplement.
Detailed derivations and extended numerical results appear in
Appendices~A--K below; computational data, simulation code,
and reproducibility materials are collected in the accompanying
computational supplement.
Readers familiar with the integrable paradigm may proceed from
Section~\ref{sec:introduction} directly to
Section~\ref{sec:nonintegrability}; the contributions of this paper
begin in Section~\ref{sec:structure-beyond}.

\paragraph{Notation.}
Throughout, $m$ and $n$ denote the nonlinearity exponents in the
K(m,n) family~\eqref{eq:kmn}.  The compacton existence results
(Theorem~\ref{thm:universal}) and non-integrability
(\S\ref{sec:nonintegrability}) hold for all real $m > 1$; the
conservation law, Hamiltonian, and collision analyses
(\S\S\ref{subsec:conservation-laws}--\ref{subsec:quasi-elastic})
are established for integer $m \geq 2$.
For traveling waves, $\xi = x - ct$ is the co-moving
coordinate, $c > 0$ the propagation speed, and $A > 0$ the
amplitude.  The compacton profile parameters are the edge
exponent~$\alpha$, the width parameter~$\ell$ (so that the support
is $|\xi| \leq \pi\ell/2$), and the speed
coefficient $k(m) = c/A^{m-1}$.
Conservation integrals are labeled $I_0, \ldots, I_4$ following
the ordering in~\S\ref{subsec:conservation-laws}.

\section{The integrable paradigm}
\label{sec:integrable-paradigm}

Few developments in twentieth-century mathematics have been as
surprising as the discovery that certain nonlinear PDEs possess
hidden linear structure.  The story begins with a numerical
accident: Fermi, Pasta, Ulam, and Tsingou observed unexpected
recurrence in a nonlinear lattice~\cite{fermi1955studies}.
Zabusky and Kruskal traced the recurrence to particle-like
waves that survive mutual collisions
intact~\cite{zabusky1965interaction}.  Gardner, Greene, Kruskal,
and Miura then showed that the Korteweg--de Vries equation could
be solved exactly by an inverse scattering
method~\cite{gardner1967method}, and Lax distilled the mechanism
to its algebraic essence: an operator pair whose compatibility
condition is equivalent to the
PDE~\cite{lax1968integrals}.  This section reviews the
integrable framework that emerged from these discoveries,
emphasizing the specific features that will serve as reference
points when we examine the K(m,m) family in
Sections~\ref{sec:nonintegrability}
and~\ref{sec:structure-beyond}.

\subsection{Lax pairs and the KdV archetype}
\label{subsec:lax-pairs}

The KdV equation, written in K(2,1) normalization as
\begin{equation}
  u_t + (u^2)_x + u_{xxx} = 0,
  \label{eq:kdv}
\end{equation}
balances nonlinear convection $(u^2)_x = 2uu_x$ against linear
dispersion~$u_{xxx}$.  Its soliton solutions
\begin{equation}
  u(x,t) = 6\kappa^2 \operatorname{sech}^2\bigl(\kappa(x - 4\kappa^2 t)\bigr)
  \label{eq:kdv-soliton}
\end{equation}
decay exponentially as $|x| \to \infty$, propagate at speed $c = 4\kappa^2$
proportional to the amplitude $A = 6\kappa^2$, and interact elastically:
two solitons emerge from a collision with their shapes and speeds intact,
displaced only by a phase shift~\cite{zabusky1965interaction}.

These properties are not coincidental. They follow from a deep
structural fact discovered by Lax~\cite{lax1968integrals}: KdV
admits a \emph{Lax pair} representation.  Associated with
equation~\eqref{eq:kdv} are two linear operators,
\begin{equation}
  L = -\partial_x^2 + \tfrac{1}{3}u, \qquad
  M = -4\partial_x^3 + 2u\,\partial_x + u_x,
  \label{eq:lax-pair}
\end{equation}
whose compatibility condition $L_t = [M, L]$ is equivalent to the
nonlinear PDE.  The operator $L$ is a Schr\"odinger eigenvalue problem
(with potential $u/3$);
$M$ governs the time evolution of the eigenfunctions.  Together they
convert a nonlinear initial-value problem into a sequence of linear
operations: decompose initial data into spectral components of~$L$,
evolve these components linearly through~$M$, and reconstruct the
solution at any later time through the inverse scattering
transform~\cite{gardner1967method,ablowitz1981solitons}.

The geometric content of this construction deserves emphasis.
The Lax equation $L_t = [M,L]$ asserts that the spectrum
of~$L$ is preserved under the time evolution defined by~$M$.
KdV dynamics are \emph{isospectral deformations}: the potential
in~$L$ changes, but its eigenvalues remain fixed.  Each eigenvalue
corresponds to a soliton, and spectral invariance explains why
solitons retain their identities through interactions.  The phase
shift experienced during collision is encoded in the scattering
matrix, which evolves linearly in the IST
framework~\cite{ablowitz1981solitons}.

This is not merely a technique for solving KdV.  It reveals that
the equation possesses an organizing principle, namely spectral
theory, governing its entire solution space.  Multi-soliton
solutions follow from $N$-fold Darboux transformations or
Hirota's bilinear method~\cite{hirota1971exact},
both deriving from the same Lax pair structure.  The organizing
principle is \emph{algebraic}: the spectral data form a complete
set of coordinates in which KdV becomes a linear flow.

\subsection{What integrability buys}
\label{subsec:integrability-features}

The Lax pair structure ramifies into four concrete mathematical
consequences, each of which plays a distinct role in the theory of
integrable dispersive equations.

\paragraph{An infinite conservation hierarchy.}
Lax~\cite{lax1968integrals} showed that the isospectral property
generates an infinite sequence of conserved quantities
$I_0, I_1, I_2, \ldots$ for KdV (in this section only; from
\S\ref{subsec:conservation-laws} on, $I_0, \ldots, I_4$ denote the
K(m,m) laws). The first three have direct
physical interpretations: mass $I_0 = \int u\,dx$, momentum
$I_1 = \int u^2\,dx$, and energy
$I_2 = \int (\tfrac{1}{3}u^3 - \tfrac{1}{2}u_x^2)\,dx$.  Higher
conservation laws involve progressively complex differential
expressions, but the sequence continues without termination.

These infinitely many constraints overdetermine the dynamics in a
precise sense.  For a system with $N$ solitons, $N$ spectral
parameters characterize the solution completely.  The infinite
conservation laws provide far more than $N$ constraints, encoding
not only the discrete spectrum (soliton parameters) but also the
continuous spectrum (radiation).  Complete integrability means that
the conservation laws form a maximal involutive set under the
Poisson bracket, in the sense of Liouville--Arnold
theory~\cite{arnold1989mathematical}.

\paragraph{Perfectly elastic collisions.}
When two KdV solitons collide, both emerge with exactly their
pre-collision amplitudes, speeds, and shapes.  The only trace of
the interaction is a phase shift: each soliton is displaced from
where it would have been without the encounter.  No radiation is
generated, no energy is lost to dispersive tails, and no amplitude
change occurs.  This perfect elasticity holds for any number of
simultaneously interacting solitons and is a direct consequence of
the IST: spectral data are preserved under the nonlinear evolution,
so the asymptotic soliton parameters cannot
change~\cite{ablowitz1981solitons}.

\paragraph{Bi-Hamiltonian structure.}
Magri~\cite{magri1978simple} and Fokas and
Fuchssteiner~\cite{fokas1981symplectic} discovered that KdV admits
not one but two compatible Hamiltonian formulations.  Two Poisson
operators
\begin{equation}
  \mathcal{J}_1 = -\partial_x, \qquad
  \mathcal{J}_2 = -\partial_x^3
    - \tfrac{2}{3}(u\,\partial_x + \partial_x\, u)
  \label{eq:bi-ham}
\end{equation}
generate the same dynamics through different Hamiltonians:
$u_t = \mathcal{J}_1\,\delta H_1/\delta u
     = \mathcal{J}_2\,\delta H_2/\delta u$,
where $H_1 = \int(\tfrac{1}{3}u^3 - \tfrac{1}{2}u_x^2)\,dx$ and
$H_2 = \tfrac{1}{2}\!\int u^2\,dx$.
The recursion operator
$\mathcal{R} = \mathcal{J}_2\,\mathcal{J}_1^{-1}$
produces the infinite conservation hierarchy automatically through
the Lenard scheme~\cite{magri1978simple,fokas1981symplectic}, connecting conservation laws at successive
differential orders.  Bi-Hamiltonian structure is not merely a
reformulation; it provides the \emph{mechanism} generating the
infinite hierarchy.

\paragraph{Exact multi-soliton formulas.}
The IST yields closed-form $N$-soliton solutions through
Hirota's bilinear method~\cite{hirota1971exact} or the
Gel'fand--Levitan--Marchenko integral
equation~\cite{ablowitz1981solitons}, expressing the complete
$N$-body interaction algebraically in terms of spectral parameters.

Together, these four features constitute the \emph{integrable
paradigm}: spectral theory organizes the entire solution space
through conservation laws, elastic collisions, bi-Hamiltonian
geometry, and exact formulas.

\subsection{The prevailing dichotomy}
\label{subsec:dichotomy}

The integrable paradigm has shaped how the nonlinear dispersive wave
community organizes its subject.  Textbooks tend to divide their material
into integrable equations (treated analytically) and non-integrable
equations (treated
numerically)~\cite{ablowitz1981solitons,drazin1989solitons}.
An implicit assumption pervades the field: equations either possess
the full integrable apparatus or they lack systematic analytical
theory altogether.  Equations admitting Lax pairs receive extensive
analytical attention; those failing the Painlev\'e test or the
bi-Hamiltonian criterion are often relegated to numerical simulation.

The K(m,m) compacton family challenges this dichotomy
fundamentally.  The challenge is sharpened by recent work of
Anco and Gandarias~\cite{anco2024weak}, who developed a rigorous
weak (distributional) formulation for compacton solutions of
nonlinearly dispersive KdV-type equations.  Their framework
establishes that compactons (which may fail to be classical
solutions at the support boundary) satisfy the PDE in a
precise distributional sense.  This places the analytical theory
of non-integrable compacton equations on firmer ground:
the structures developed in the following sections rest on a
well-defined solution concept rather than purely formal
manipulations.  As the next section demonstrates, these equations
fail every standard test of integrability, yet the mathematical
structure they exhibit cannot be dismissed as coincidence.

\section{The K(m,m) family: non-integrability established}
\label{sec:nonintegrability}

This section surveys established results on K(m,m) non-integrability:
the compacton construction~\cite{rosenau1993compactons}, the
integrability classification of Hern\'andez Heredero et
al.~\cite{heredero2020nonexistence}, and Vodov\'a's conservation law
analysis~\cite{vodova2013complete}.
The contribution here is not new theorems but a unified presentation
connecting these results to the paper's main theme: what mathematical
structure persists when integrability fails.

The generalized KdV family
\begin{equation}
  u_t + (u^m)_x + (u^n)_{xxx} = 0, \qquad m, n > 1,
  \label{eq:kmn-general}
\end{equation}
introduced in~\cite{rosenau1993compactons},
provides an ideal test case.  This two-parameter extension of KdV
allows both the convective and
dispersive nonlinearities to be independently specified.  When
$m = 2$ and $n = 1$, equation~\eqref{eq:kmn-general} reduces to
the classical KdV.  When $m = n$, a qualitatively new phenomenon
appears: solitary waves with
genuinely compact support.

\subsection{Compactons: a new kind of localization}
\label{subsec:compactons}

Consider the K(m,m) equation
\begin{equation}
  u_t + (u^m)_x + (u^m)_{xxx} = 0, \qquad m > 1.
  \label{eq:kmm}
\end{equation}
The critical distinction from KdV lies in the dispersive term
$(u^m)_{xxx}$.  In KdV, the linear dispersion $u_{xxx}$ operates
at all amplitudes: even where $u$ is negligibly small, dispersive
spreading persists.  In K(m,m), the dispersive strength scales as
$u^{m-1}$, vanishing wherever the solution vanishes.  When
dispersion shuts off at $u = 0$, nothing prevents the solution
from remaining identically zero beyond a finite boundary.  Compact
support arises not from artificial truncation or imposed boundary
conditions, but from the intrinsic structure of the equation.

This mechanism produces \emph{compactons}: solitary waves that
vanish identically outside a finite
interval~\cite{rosenau1993compactons}.  The term, coined by
Rosenau, reflects the compact
support that distinguishes these waves from classical solitons.
Amplitude-dependent dispersion and the resulting compact support
also arises in the dynamics of granular
chains~\cite{nesterenko1983propagation,nesterenko2001dynamics},
giving the K(m,n) family physical as well as mathematical interest.
Subsequent investigations by
Rosenau~\cite{rosenau1997nonanalytic,rosenau1998class,rosenau2000compact}
explored the analytical properties of these nonanalytic solutions
and their role within broader families of nonlinear dispersive
interactions.

To make the compact support mechanism concrete, consider the
traveling wave reduction.  Substituting $u(x,t) = U(x - ct)$ into
equation~\eqref{eq:kmm} and integrating once yields the ODE
\begin{equation}
  -cU + U^m + (U^m)'' = 0,
  \label{eq:tw-ode}
\end{equation}
where primes denote derivatives with respect to
$\xi = x - ct$.  At any point where $U = 0$, the nonlinear
terms vanish: the equation becomes $0 + 0 + 0 = 0$, satisfied
trivially.  A solution that reaches $U = 0$ at some finite
$\xi_0$ can continue as $U \equiv 0$ for all
$|\xi| > \xi_0$ without violating the equation.  No exponential
tail is forced, because no linear mechanism drives the solution
away from zero.  The general ansatz
$U(\xi) = A\cos^\alpha(\xi/\ell)$ for $|\xi| \leq \pi\ell/2$
captures this structure, with the three parameters $\alpha$,
$\ell$, and $A$ determined by the ODE.

\paragraph{K(2,2).}
The simplest case, $u_t + (u^2)_x + (u^2)_{xxx} = 0$, admits
traveling wave solutions of the form
\begin{equation}
  u(x,t) = \frac{4c}{3}\cos^2\!\biggl(\frac{x - ct}{4}\biggr)
  \quad\text{for } |x - ct| \leq 2\pi,
  \label{eq:k22-solution}
\end{equation}
and $u \equiv 0$ elsewhere.  The amplitude is $A = 4c/3$ and the
speed--amplitude relation is $c = (3/4)A$, linear in amplitude.
The solution vanishes quadratically at its edges: near the boundary
$\xi_0 = 2\pi$, the profile behaves as
$u \sim B(\xi_0 - \xi)^2$, maintaining continuous first
derivatives at the junction with the exterior zero state.
The support has fixed width $4\pi$, independent of
amplitude~\cite{rosenau1993compactons}.

\paragraph{K(3,3).}
Cubic nonlinearity changes the edge character.  The equation
$u_t + (u^3)_x + (u^3)_{xxx} = 0$ admits
\begin{equation}
  u(x,t) = A\cos\!\biggl(\frac{x - ct}{3}\biggr)
  \quad\text{for } |x - ct| \leq \frac{3\pi}{2},
  \label{eq:k33-solution}
\end{equation}
with $c = (2/3)A^2$.  The edge exponent drops to
$\alpha = 1$: the solution vanishes linearly, producing cusp-like
boundaries where the first derivative jumps
discontinuously~\cite{rosenau1994nonlinear}.  Despite this
reduced regularity, the compacton propagates without change of
form and retains its compact support indefinitely.  The support
width is $3\pi$, narrower than $4\pi$ for K(2,2), again
independent of amplitude.  The quadratic speed--amplitude
relation $c = (2/3)A^2$ means that K(3,3) compactons of
different amplitudes separate more rapidly than their K(2,2)
counterparts, where the relation is linear.

\paragraph{K(4,4) and beyond.}
The pattern continues with increasing nonlinearity.
K(4,4) admits solutions $u = A\cos^{2/3}(\xi/\ell)$ with
fractional edge exponent $\alpha = 2/3$ and support width
$(8/3)\pi$.  Derivatives diverge at the boundary, requiring
weak solution formulations for mathematical
rigor~\cite{rosenau2018compactons}.  Subsequent cases K(5,5)
through K(7,7) exhibit progressively sharper edges (with
$\alpha = 1/2, 2/5, 1/3$ respectively) while maintaining
the same compact support structure.  Cooper, Shepard, and
Sodano~\cite{cooper1993solitary} studied these generalized solitary
wave solutions within a broader class of KdV-type equations,
confirming the systematic parameter relationships across the family.

Figure~\ref{fig:compacton-profiles} displays representative
compacton profiles for K(2,2), K(3,3), and K(4,4), each with unit
amplitude.  The regularity transition is visible at the support
boundary: K(2,2) joins the zero exterior smoothly with vanishing
slope, K(3,3) meets it at a cusp with nonzero slope, and K(4,4)
arrives with diverging derivative.  All three profiles have the
cosine-power form $U(\xi) = \cos^{\alpha}(\xi/\ell)$ from the
traveling wave ansatz, with the amplitude-independent widths
$W = \pi\ell$ decreasing as $m$ increases.

\begin{figure}[ht]
\centering
\includegraphics{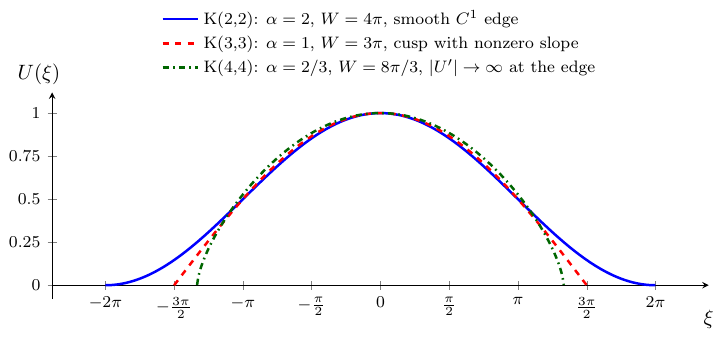}
\caption{Compacton profiles $U(\xi) = \cos^{\alpha}(\xi/\ell)$
for K(2,2), K(3,3), and K(4,4), each with unit amplitude $A = 1$.
All three vanish identically outside their support intervals.
Boundary regularity degrades with increasing~$m$:
K(2,2) joins the exterior smoothly ($\alpha = 2$, $U' = 0$ at the edge),
K(3,3) forms a cusp ($\alpha = 1$, $U' \neq 0$),
and K(4,4) has divergent slope ($\alpha = 2/3$, $|U'| \to \infty$).
The support width $W = \pi\ell = 2\pi m/(m-1)$
decreases monotonically with~$m$.}
\label{fig:compacton-profiles}
\end{figure}

Three features distinguish compactons from KdV solitons.
First, compactons vanish identically beyond a finite boundary
determined by the equation structure, rather than decaying
exponentially over infinite extent.  Second, their spatial width
is amplitude-independent: all K(m,m) compactons of a given~$m$
occupy the same interval regardless of how tall they are.  Third,
edge regularity depends on~$m$, transitioning from smooth
($m = 2$) through cusped ($m = 3$) to singular-derivative
($m \geq 4$).

\subsection{The Hern\'andez Heredero--Euler--Reyes classification}
\label{subsec:her-classification}

Given these striking traveling wave solutions, a natural question
arises: might K(m,m) equations be integrable despite their
departure from KdV?  Could some hidden Lax pair or recursion
operator exist, restoring the full integrable apparatus?

The definitive answer is no.  Hern\'andez Heredero, Euler, Euler,
and Reyes~\cite{heredero2020nonexistence} resolved the
integrability question for the entire K(m,n) family through a
systematic analysis of formal symmetry structure.

Their classification is exhaustive: integrable K(m,n) equations
exist only when the dispersive exponent takes one of three values,
$n \in \{1,\; -2,\; -\tfrac{1}{2}\}$, with a finite list of
admissible convective exponents~$m$ in each case.  These include
the familiar K(2,1)~=~KdV and K(3,1)~$\sim$~mKdV
(related through a Miura-type
transformation~\cite{miura1968korteweg}) on the $n = 1$
line, together with exotic negative-exponent cases such as
K($-2$,$-2$), K($-\tfrac{1}{2}$,$-\tfrac{1}{2}$), and
K($-1$,$-2$) that are connected to KdV through
nonlocal differential substitutions.  We extract the consequence
relevant to the present work.

\begin{theorem}[Non-integrability of compacton equations
  {\cite{heredero2020nonexistence}}]
\label{thm:her-classification}
No K(m,n) equation~\eqref{eq:kmn-general} with $m > 1$ and
$n > 1$ possesses a recursion operator.  In particular, no such
equation is completely integrable.
\end{theorem}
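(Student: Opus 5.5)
The plan is to derive Theorem~\ref{thm:her-classification} as a corollary of the exhaustive classification of Hern\'andez Heredero et al.~\cite{heredero2020nonexistence}, and to sketch why that classification excludes the region $m>1$, $n>1$. The argument uses the formal symmetry (formal recursion operator) approach of Mikhailov--Shabat--Sokolov. One first puts \eqref{eq:kmn-general} in evolutionary form $u_t = F[u] = -n u^{n-1}u_{xxx} - 3n(n-1)u^{n-2}u_xu_{xx} - \dots - m u^{m-1}u_x$. This is a third-order quasilinear equation with leading coefficient $f(u) = -n u^{n-1}$. A recursion operator, if it exists, is in particular a formal symmetry: a pseudodifferential series $\Lambda = \sum_{j\le 1} \lambda_j D_x^j$ satisfying $\Lambda_t - [F_*,\Lambda]=0$ to all orders, where $F_*$ is the Fr\'echet derivative. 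So it suffices to show that no formal symmetry exists.

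\textbf{Step 1 (canonical densities).} The existence of a formal symmetry forces an infinite sequence of conservation laws with explicit densities, $\rho_{-1} = f^{-1/3}$, $\rho_0 = f_{2}/f$-type expressions, and $\rho_1,\rho_2,\dots$ computed recursively from the coefficients of $F_*$. Each must be conserved, i.e.\ $D_t\rho_k \in \operatorname{Im} D_x$. The check uses the Euler operator: $E(D_t\rho_k)=0$.

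\textbf{Step 2 (first obstructions).} Take $\rho_{-1} = u^{-(n-1)/3}$ up to a constant. Compute $E\bigl(D_t u^{-(n-1)/3}\bigr)$. The dispersive part contributes terms of the form $u^{p}u_xu_{xx}$ and $u^{p}u_x^3$, whose coefficients are polynomials in $n$. The convective part contributes $u^{m-1-(n-1)/3-1}u_x$-type terms, which are total derivatives and hence harmless. Imposing vanishing gives polynomial conditions on $n$. One then does the same for $\rho_0$ and $\rho_1$. The latter mixes $m$ and $n$ through terms proportional to $m(m-1)u^{m-2}$ multiplied by powers of $u^{n-1}$. The resulting system of polynomial equations in $(m,n)$ should have only the finitely many solutions listed in~\cite{heredero2020nonexistence}: $n\in\{1,-2,-\tfrac12\}$ with specific $m$.

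\textbf{Step 3 (conclusion).} No admissible pair has $n>1$. Hence for $m,n>1$ some canonical density fails to be conserved, so there is no formal symmetry, and a fortiori no recursion operator. Complete integrability, in the symmetry sense of infinitely many higher symmetries or a recursion operator generating them, is thereby excluded.

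The main obstacle is Step 2 at the level of $\rho_1$ (and possibly $\rho_2$). The densities become lengthy rational expressions in $u$ and its derivatives with non-integer powers. Separating the variational derivative into independent monomials for general real exponents requires care: for special $m,n$ coincidences of powers of $u$, distinct monomials may merge, and these cases must be treated separately. I would first treat generic $(m,n)$ and then list the exceptional resonant exponents, checking each directly. For the present paper, it is legitimate to cite the completed classification of~\cite{heredero2020nonexistence} and to note that its list contains no pair with $n>1$.
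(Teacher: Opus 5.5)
Your proposal is correct and follows the paper's route: the paper's proof is exactly the citation of the Hern\'andez Heredero--Euler--Euler--Reyes result that a recursion operator forces $n\in\{1,-2,-\tfrac12\}$, and none of these values satisfies $n>1$. Your sketch of the mechanism is also sound, and the obstacle you expect at $\rho_1$ never arises for this statement: under $u_t=-(u^m)_x-(u^n)_{xxx}$ a density $g(u)$ is conserved iff $g''$ is a constant multiple of $u^{n-1}$ (the convective term always gives a total derivative), so the first density $\rho_{-1}=u^{-(n-1)/3}$ alone already forces $n\in\{1,-2,-\tfrac12\}$, whatever $m$ is.
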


\begin{proof}
The Hern\'andez Heredero--Euler--Reyes classification proves that
K(m,n) admits a recursion operator only when
$n \in \{1, -2, -\frac{1}{2}\}$.  Since $n > 1$ excludes all three
values, the result follows immediately.
\end{proof}

The proof of the full classification proceeds through compatibility
conditions for formal recursion operators acting on symmetry spaces.
For integrable equations, these conditions hold at every
differential order.  For non-integrable cases, contradictions
emerge at finite order, terminating the symmetry hierarchy.

The critical consequence for the present work is this: \emph{every
K(m,m) equation with real $m > 1$ is non-integrable.}  On the
diagonal $m = n$ the only integrable members are $m = -2$ and
$m = -\frac{1}{2}$, both with negative exponents and no compacton
solutions, together with the degenerate linear case $m = 1$.
Because the classification treats $m$ and $n$ as real parameters
(note the non-integer integrable case $m = -\frac{1}{2}$), the
conclusion holds for all real $m > 1$, not merely for integers.
The matched nonlinearity, with dispersion power equal to convection
power, yields compactons and also destroys integrability: no K(m,m) equation supports both
compact solitary waves and the algebraic machinery of complete
integrability.

All integrable K(m,n) equations are connected to KdV through
differential substitutions, establishing K(2,1) as the archetypal
progenitor of the integrable subfamily.  K(m,m) equations with
$m > 1$, by contrast,
stand isolated: no Miura-type transformation connects them to any
integrable system~\cite{heredero2020nonexistence}.  This isolation
is not merely the absence of a known transformation.  The
Hern\'andez Heredero--Euler--Reyes analysis proves that no such transformation
can exist, because the underlying symmetry structure is
incompatible with infinite-dimensional recursion.

\subsection{Four proofs of non-integrability}
\label{subsec:four-proofs}

The classification theorem settles the question through formal
symmetry methods.  But K(m,m) non-integrability is overdetermined:
four independent lines of evidence converge on the same conclusion.

\paragraph{(a) Finite conservation hierarchy.}
Vodov\'a~\cite{vodova2013complete} proved structurally that K(m,m)
equations with $m \neq -2, -1/2, 0, 1$ admit exactly four local
conservation laws: mass, energy ($\int u^{m+1}/(m+1)\,dx$), and
two trigonometric functionals.  Her proof establishes that any
conserved density depends at most on $x$, $t$, and~$u$ (no
derivatives), reducing the classification to a finite problem.
A fifth conserved quantity (the nonlocal momentum $I_4$ of Rosenau and
Zilburg~\cite{rosenau2018compactons}) exists
but falls outside the scope of local classification.  Explicit
forms of all five appear in Section~\ref{subsec:conservation-laws}.
Vodov\'a's structural proof applies to all real $m \neq -2, -1/2, 0, 1$,
not only integer exponents.

Integrable equations possess infinite conservation hierarchies
generated by recursion operators.  The abrupt termination at five
laws, with no structural pattern suggesting
continuation, provides direct evidence of non-integrability
through conservation law counting.

\paragraph{(b) Absence of Lax pairs.}
Systematic searches for differential operators $L$ and $M$
satisfying $L_t = [M, L]$ with K(m,m) dynamics find no valid
candidates.  The Hern\'andez Heredero--Euler--Reyes
classification~\cite{heredero2020nonexistence} establishes through
formal symmetry analysis that no recursion operator exists for
K(m,m) with $m > 1$, which precludes the existence of Lax pairs.
Applied to the K(m,m) family, the analysis shows that
compatibility conditions for recursion operators fail at finite
differential order.  Each candidate either fails the
compatibility condition or produces dynamics inconsistent with K(m,m).
Without a Lax pair, the inverse scattering transform cannot be
constructed, and the spectral theory that organizes integrable
dynamics is unavailable.

\paragraph{(c) Painlev\'e test failure.}
The Painlev\'e property requires that all movable singularities of
solutions in the complex plane be poles.  Dominant balance analysis
of the K(m,m) traveling wave ODE reveals movable branch points for
every $m > 1$: the leading singularity exponent $-2/(m-1)$ is
non-integer except at $m = 2$ and $m = 3$, and even those cases
fail at higher-order resonance conditions.  The detailed
expansion is given in the Supplement, \S S6.

\paragraph{(d) No bi-Hamiltonian structure.}
K(m,m) equations admit a single Hamiltonian formulation through
the third-order Poisson operator $\mathfrak{D} = -(D_x^3 + D_x)$, with $D_x = \partial/\partial x$
(Section~\ref{subsec:hamiltonian-geometry}).  The bi-Hamiltonian
criterion asks whether a compatible second Poisson operator exists,
such that a recursion operator can be formed from the pair.
Compatibility would generate the infinite
conservation hierarchy through the Lenard recursion
scheme~\cite{magri1978simple,fokas1981symplectic}.

No such second operator exists.  Vodov\'a's
proof~\cite{vodova2013complete} that K(m,m) possesses no
generalized symmetries of order greater than~$3$ implies that no
recursion operator exists, which in turn precludes a compatible
bi-Hamiltonian pair.  The single Hamiltonian structure persists,
but its extension to a compatible pair is impossible.

These four independent failures reinforce each other and the
Hern\'andez Heredero--Euler--Reyes classification.  Among them, the
classification theorem and the Painlev\'e analysis cover all
real $m > 1$, while Vodov\'a's structural proof covers all
real $m \neq -2, -1/2, 0, 1$
(Theorem~\ref{thm:finite-hierarchy}).  For integer exponents, all four arguments
converge; for non-integer $m \in (1,2) \cup (2,\infty)$, the
classification and Painlev\'e test suffice to establish
non-integrability independently.

The point is
not that a Lax pair has merely \emph{not been found}, or that
no one has yet succeeded in extending the conservation hierarchy
beyond five laws.  
The Hern\'andez Heredero--Euler--Reyes classification
proves, through finite-order obstruction arguments, that the
required algebraic structures \emph{cannot exist} for any
real $m > 1$.  K(m,m) non-integrability is a theorem, not a
conjecture, and it holds throughout the compacton-bearing
parameter range.

\medskip

A reader familiar with the integrable paradigm of
Section~\ref{sec:integrable-paradigm} now faces a clear
question.  K(m,m) compactons are elegant mathematical objects with
explicit traveling wave solutions, systematic parameter
relationships, and a finite but nontrivial conservation structure.
Yet they are provably non-integrable: no Lax pairs, no inverse
scattering, no infinite conservation hierarchy, no bi-Hamiltonian
geometry.  The standard theoretical apparatus is unavailable.

What mathematical structure, if any, remains?

\section{Structure beyond integrability}
\label{sec:structure-beyond}

Having established what K(m,m) compactons lack
(\S\ref{sec:nonintegrability}), we turn to what they possess.
The non-integrability results might suggest that little remains
to say about these equations from an analytical perspective.
The opposite is the case.
Four subsections develop the evidence for systematic mathematical
organization in the absence of complete integrability: universal
algebraic formulas (\S\ref{subsec:universal-formulas}),
regularity classification (\S\ref{subsec:regularity}),
finite conservation hierarchy (\S\ref{subsec:conservation-laws}),
and Hamiltonian geometry (\S\ref{subsec:hamiltonian-geometry}).
Detailed derivations supporting these topics are collected in
Appendices~\ref{app:weak-solutions}--\ref{app:hamiltonian-detail}.
The collision theory that converts these structural elements into
dynamical predictions follows in \S\ref{subsec:quasi-elastic}.

\subsection{Three universal formulas}
\label{subsec:universal-formulas}

Despite the non-integrability established in
\S\ref{sec:nonintegrability}, the K(m,m) family harbors a
striking regularity: three algebraic relationships connect every
compacton's shape, width, and speed to the single parameter~$m$.
The $K(m,m)$ compacton was first derived
in~\cite{rosenau1993compactons}; subsequent work verified individual
cases and Rosenau and Zilburg~\cite{rosenau2018compactons}
presented the general K(m,m) formula within their comprehensive
summary of the $\mathcal{C}_N(m,a,b)$ equation family, whose
dispersion term $\frac{1}{b}[u^a(u^b)_{xx}]_x$ reduces to that of
K(m,n) at $a = 0$, $b = n$ (equation~(2.35a) of that survey; see
Appendix~\ref{app:extensions}).  However, the existing treatments derive the formulas
for specific values of~$m$ or state the general result without a
self-contained proof.  The theorem below provides a unified derivation
for all $m > 1$ simultaneously, extracts the structural mechanism
behind each formula, and establishes consequences in
Corollaries~\ref{cor:width}--\ref{cor:speed-bounds} and
Proposition~\ref{prop:trig-vanishing}
(\S\ref{subsec:quasi-elastic}).

The compacton solutions take the form
\begin{equation}\label{eq:compacton-ansatz}
U(\xi) = A\cos^{\alpha}\!\left(\frac{\xi}{\ell}\right),
\qquad |\xi| \leq \frac{\pi\ell}{2},
\end{equation}
with $U(\xi) = 0$ outside this interval. Here $A > 0$ is the amplitude,
$\alpha$ governs the rate of approach to zero at the support boundary,
and $\ell$ controls the spatial width. The K(2,2) compacton was identified
in~\cite{rosenau1993compactons}, which also noted that expansion of the
discrete chain with monomial average leads to K(m,m) compactons ``of
the form $\cos^{2/(m-1)}$.'' Rosenau~\cite{rosenau1994nonlinear}
derived the $\mathcal{C}_1(2,1,1)$ family explicitly.  The 2018
survey~\cite{rosenau2018compactons} lists the general K(n,n) traveling
compacton in equation~(2.35a), expressed in the notation of the
$\mathcal{C}_N(m,a,b)$ framework.

The three relationships connecting the compacton
parameters $(\alpha, \ell, c)$ to the nonlinearity exponent~$m$,
namely the edge exponent $\alpha = 2/(m-1)$, the width parameter $\ell = 2m/(m-1)$,
and the speed--amplitude relation $c = [(m+1)/(2m)]\,A^{m-1}$, are
implicit in the existing formula.  The following theorem extracts all
three simultaneously for arbitrary $m > 1$ and identifies the
distinct mechanism behind each.

\begin{theorem}[Universal Formulas for K(m,m) Compactons]\label{thm:universal}
For $m > 1$, the K(m,m) equation
\begin{equation}\tag{\ref{eq:kmm}}
u_t + (u^m)_x + (u^m)_{xxx} = 0
\end{equation}
admits traveling wave solutions $u(x,t) = U(x - ct)$ of the
form~\eqref{eq:compacton-ansatz}, where $A > 0$ is an arbitrary
amplitude parameter and the three characteristic parameters are
determined uniquely by~$m$:
\begin{alignat}{2}
\alpha &= \frac{2}{m-1} &\qquad& \text{(edge exponent),}
  \label{eq:alpha}\\[4pt]
\ell &= \frac{2m}{m-1} &\qquad& \text{(width parameter),}
  \label{eq:ell}\\[4pt]
c &= \frac{m+1}{2m}\,A^{m-1} &\qquad& \text{(speed--amplitude relation).}
  \label{eq:speed}
\end{alignat}
\end{theorem}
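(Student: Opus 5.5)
The plan is to substitute the ansatz~\eqref{eq:compacton-ansatz} directly into the once-integrated traveling wave ODE~\eqref{eq:tw-ode}, $-cU + U^m + (U^m)'' = 0$, and to match powers of $\cos\theta$, where $\theta = \xi/\ell$. The useful observation is that $U^m = A^m\cos^{\beta}\theta$ with $\beta = m\alpha$ is again a pure cosine power. Using $\sin^2\theta = 1-\cos^2\theta$, one gets
\[
\frac{d^2}{d\xi^2}\cos^{\beta}\theta = \frac{1}{\ell^2}\Bigl[\beta(\beta-1)\cos^{\beta-2}\theta - \beta^2\cos^{\beta}\theta\Bigr].
\]
The ODE therefore becomes, on $|\theta|<\pi/2$,
\[
-cA\cos^{\alpha}\theta + A^m\Bigl(1 - \frac{\beta^2}{\ell^2}\Bigr)\cos^{\beta}\theta + \frac{A^m\beta(\beta-1)}{\ell^2}\cos^{\beta-2}\theta = 0 .
\]

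Next I would use the linear independence of distinct real powers of $\cos\theta$ on $(-\pi/2,\pi/2)$; equivalently, distinct powers of $s=\cos\theta\in(0,1]$ are independent as functions of $s$. Since $m>1$, we have $\beta\neq\alpha$, and trivially $\beta-2\neq\beta$. So the only way the $\cos^{\alpha}$ term, which carries $c>0$, can be balanced is $\beta-2=\alpha$. If instead $\beta-2\neq\alpha$, then every coefficient must vanish separately, which forces $c=0$. That is excluded, so this branch contributes nothing.

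With $\beta - 2 = \alpha$ the three parameters follow in turn:
\begin{itemize}
\item $m\alpha-2=\alpha$ gives $\alpha = 2/(m-1)$, which is~\eqref{eq:alpha}.
\item The $\cos^{\beta}$ coefficient must vanish, so $\ell=\beta=m\alpha$, giving $\ell = 2m/(m-1)$, which is~\eqref{eq:ell}.
\item The remaining balance is $cA = A^m(\beta-1)/\beta$. Since $\beta-1=(m+1)/(m-1)$, this yields $c=\frac{m+1}{2m}A^{m-1}$, which is~\eqref{eq:speed}.
\end{itemize}
The amplitude $A$ never gets constrained, which shows that it is free. Uniqueness of $(\alpha,\ell,c)$ within the ansatz comes from the same independence argument.

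The step I expect to need the most care is justifying that the piecewise function (the cosine power on the support, zero outside) is a genuine solution of~\eqref{eq:kmm} across $\xi=\pm\pi\ell/2$. For this I would check regularity of $V=U^m=A^m\cos^{\beta}\theta$ with $\beta = 2m/(m-1)>2$:
\begin{itemize}
\item $V$ and $V'$ vanish at the edges.
\item $V''$ is a combination of $\cos^{\beta-2}\theta=\cos^{\alpha}\theta$ and $\cos^{\beta}\theta$, both of which tend to $0$ at the edges.
\end{itemize}
Hence $V\in C^2(\mathbb{R})$ after extension by zero, and the integrated ODE holds pointwise on all of $\mathbb{R}$, including the exterior where $0+0+0=0$. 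The integration constant zero is consistent with the zero state.

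Differentiating this relation in the distributional sense gives $-cU' + V' + V''' = 0$ in $\mathcal{D}'(\mathbb{R})$. This uses only that $U$ is continuous and $V\in C^2$, even though $U$ itself may have infinite slope at the edge when $m\ge 4$. It is therefore a weak traveling wave solution of~\eqref{eq:kmm} in the sense of the distributional framework cited in \S\ref{subsec:dichotomy}.
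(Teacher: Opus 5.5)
Your proposal is correct and follows the paper's proof: you make the same substitution of the cosine-power ansatz into the once-integrated ODE, use power matching $m\alpha-2=\alpha$ to get the edge exponent, and match coefficients at $\cos^{m\alpha}\theta$ and $\cos^{m\alpha-2}\theta$ to get $\ell$ and $c$. Your check that $U^m$ extends by zero to a $C^2(\mathbb{R})$ function is a slightly more complete version of the paper's flux-matching remark at the support edge, since it makes the integrated ODE hold on all of $\mathbb{R}$ and so gives a distributional solution directly.
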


\begin{proof}
The argument extracts three independent constraints from the
traveling wave ODE: with $\theta = \xi/\ell$, matching the dominant
powers of $\cos\theta$
determines~$\alpha$, balancing the coefficients of the leading
power determines~$\ell$, and the remaining balance
determines~$c$.

Substituting $u(x,t) = U(\xi)$ with $\xi = x - ct$
into~\eqref{eq:kmm} and integrating once yields the
traveling wave ODE
\begin{equation}\tag{\ref{eq:tw-ode}}
-cU + U^m + (U^m)'' = 0.
\end{equation}
The integration constant vanishes by requiring that both $U$ and
the flux $U^m + (U^m)''$ tend to zero at the support boundary
$|\xi| \to \pi\ell/2$.  For $m \geq 4$, the derivative $U'$
diverges at the support boundary (since $\alpha < 1$), but the
product $(U^m)' = mU^{m-1}U'$ remains bounded because the
prefactor $U^{m-1}$ decays fast enough to compensate.  In
particular, $(U^m)' \sim (\pi/2 - \theta)^{m\alpha - 1} \to 0$ as
$\theta \to \pi/2$ whenever $m\alpha > 1$, which is satisfied
for all $m > 1$ since $m\alpha = 2m/(m-1) > 2$.  The ODE is
therefore analyzed on the open interval
$\theta \in (-\pi/2, \pi/2)$, where $U > 0$ and all terms are
smooth.

Set $\theta = \xi/\ell$ so that $d/d\xi = \ell^{-1}\,d/d\theta$,
and write $U = A\cos^{\alpha}\!\theta$ and
$U^m = A^m\cos^{\gamma}\!\theta$, where $\gamma = m\alpha$.
The second $\xi$-derivative of $U^m$ is
$(U^m)'' = \ell^{-2}(U^m)_{\theta\theta}$.
Computing $(A^m\cos^{\gamma}\!\theta)_{\theta\theta}$ by differentiating
twice and applying $\sin^2\!\theta = 1 - \cos^2\!\theta$ gives
\begin{equation}\label{eq:Um-second}
(U^m)'' = \frac{A^m}{\ell^2}\Bigl[
  \gamma(\gamma-1)\cos^{\gamma-2}\!\theta
  - \gamma^2\cos^{\gamma}\!\theta\Bigr].
\end{equation}

Substituting into~\eqref{eq:tw-ode}:
\begin{equation}\label{eq:full-sub}
-cA\cos^{\alpha}\!\theta + A^m\cos^{\gamma}\!\theta
+ \frac{A^m}{\ell^2}\Bigl[
  \gamma(\gamma-1)\cos^{\gamma-2}\!\theta
  - \gamma^2\cos^{\gamma}\!\theta\Bigr] = 0.
\end{equation}

This equation involves three powers of $\cos\theta$:
$\cos^{\alpha}\!\theta$, $\cos^{\gamma}\!\theta$,
and $\cos^{\gamma-2}\!\theta$. For the equation to hold
identically on $(-\pi/2,\,\pi/2)$, coefficients of linearly
independent powers must vanish separately. Since $m > 1$
implies $\gamma > \alpha$, the power $\cos^{\alpha}\!\theta$
is independent of $\cos^{\gamma}\!\theta$.
The term $-cU = -cA\cos^{\alpha}\!\theta$ can balance with the
$\cos^{\gamma-2}\!\theta$ term from the dispersion only if
$\alpha = \gamma - 2 = m\alpha - 2$. Solving:
\[
\alpha(1 - m) = -2 \qquad\Longrightarrow\qquad
\alpha = \frac{2}{m-1},
\]
establishing~\eqref{eq:alpha}. With this matching,
equation~\eqref{eq:full-sub} involves exactly two independent
powers. Setting the coefficient of $\cos^{\gamma}\!\theta$ to zero:
\[
A^m\Bigl[1 - \frac{\gamma^2}{\ell^2}\Bigr] = 0
\qquad\Longrightarrow\qquad
\ell = \gamma = \frac{2m}{m-1},
\]
establishing~\eqref{eq:ell}. Setting the coefficient of
$\cos^{\gamma-2}\!\theta$ to zero:
\[
-cA + \frac{A^m}{\ell^2}\,\gamma(\gamma-1) = 0
\qquad\Longrightarrow\qquad
c = A^{m-1}\,\frac{\gamma-1}{\gamma}
  = \frac{m+1}{2m}\,A^{m-1},
\]
establishing~\eqref{eq:speed}.
After these three conditions are imposed,
equation~\eqref{eq:full-sub} reduces to $0 = 0$ on
$(-\pi/2,\,\pi/2)$.
At the support boundary $\theta = \pi/2$, the solution
satisfies $U(\xi_0) = 0$ and the flux condition
$(U^m)'(\xi_0^-) = 0$, matching the exterior solution
$U \equiv 0$.
\end{proof}

\medskip

Each formula in Theorem~\ref{thm:universal} has a distinct structural
origin.  \emph{Power matching} determines the edge exponent:
$\alpha = 2/(m-1)$ is the unique value for which the translation
term $-cU$ and the dispersive term $(U^m)''$ contribute to the
same power of $\cos\theta$.  \emph{Coefficient cancellation}
determines the width: forcing the nonlinear term $U^m$ and the
leading part of the dispersion to cancel at order
$\cos^{\gamma}\!\theta$ requires $\ell = \gamma = 2m/(m-1)$.
Finally, \emph{amplitude--speed balance} determines the
propagation speed: the remaining terms at order
$\cos^{\gamma-2}\!\theta$ yield $c = [(m+1)/(2m)]\,A^{m-1}$.
Together, these constraints exhaust the information available
from the cosine-power ansatz.

\subsubsection*{Verification and immediate consequences}

Supplement Table~S1 confirms the theorem across
$m = 2$ through $m = 7$, including the limiting behavior as
$m \to \infty$.

Several consequences follow directly from the universal formulas.

\begin{corollary}[Width Independence]\label{cor:width}
The compacton support width $W = \pi\ell = 2\pi m/(m-1)$ depends
only on~$m$ and is independent of amplitude~$A$.
\end{corollary}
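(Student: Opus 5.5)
The plan is to read Corollary~\ref{cor:width} directly off Theorem~\ref{thm:universal}, with essentially no new analysis. First, recall from the ansatz~\eqref{eq:compacton-ansatz} that the compacton is supported exactly on $|\xi| \leq \pi\ell/2$. This holds because $\cos^{\alpha}(\xi/\ell) > 0$ for $|\xi/\ell| < \pi/2$ and vanishes at $\xi = \pm\pi\ell/2$, while $U \equiv 0$ outside by construction. The support is therefore a closed interval of length $W = \pi\ell$.

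Next, substitute the width formula~\eqref{eq:ell}, $\ell = 2m/(m-1)$, to get $W = 2\pi m/(m-1)$. The key observation is \emph{where} $\ell$ comes from in the proof of Theorem~\ref{thm:universal}. It is fixed by the coefficient condition at order $\cos^{\gamma}\theta$, namely $A^m(1 - \gamma^2/\ell^2) = 0$. Since $A > 0$, the factor $A^m$ is nonzero and can be divided out, leaving $\ell^2 = \gamma^2$. This is an equation involving only $m$, because $\gamma = m\alpha = 2m/(m-1)$ by~\eqref{eq:alpha}, and $\alpha$ itself came from the $A$-free power-matching condition $\alpha = m\alpha - 2$. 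Taking $\ell > 0$ gives $\ell = \gamma$, independent of $A$.

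By contrast, the amplitude enters only the speed relation~\eqref{eq:speed}, and it does so through the order-$\cos^{\gamma-2}\theta$ balance, which does not feed back into $\ell$. Hence $W$ depends only on $m$.

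The only point needing care is to justify that the support is exactly $[-\pi\ell/2,\pi\ell/2]$ and not larger or smaller, which is immediate from positivity of the cosine there. There is no genuine obstacle: the corollary is a bookkeeping consequence of the homogeneity of the balance at order $\cos^{\gamma}\theta$ in $A$.
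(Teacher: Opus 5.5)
Your proposal is correct and follows the paper's route. The corollary is read off directly from Theorem~\ref{thm:universal}: the support of the ansatz~\eqref{eq:compacton-ansatz} is $|\xi|\le\pi\ell/2$, and $\ell=2m/(m-1)$ is fixed by the order-$\cos^{\gamma}\theta$ balance $A^m(1-\gamma^2/\ell^2)=0$, from which the amplitude cancels. The paper gives no argument beyond this, so your check that the support is exactly $[-\pi\ell/2,\pi\ell/2]$ is harmless extra bookkeeping.
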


This property has no analog for KdV solitons, whose width scales
as~$A^{-1/2}$. A tall K(m,m) compacton and a short one occupy the
same spatial extent; they travel at different speeds but maintain
identical profiles up to vertical rescaling.

\begin{corollary}[Product Identity]\label{cor:product}
The edge exponent and width parameter satisfy
\begin{equation}\label{eq:product}
\alpha\cdot\ell = \frac{4m}{(m-1)^2},
\end{equation}
connecting boundary regularity to spatial extent through a single
algebraic relation.
\end{corollary}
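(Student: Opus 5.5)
The plan is to read the identity off Theorem~\ref{thm:universal} directly: the statement involves only $\alpha$ and $\ell$, and the theorem gives both in closed form, \eqref{eq:alpha} and \eqref{eq:ell}. I would multiply the two expressions to get
\[
\alpha\cdot\ell = \frac{2}{m-1}\cdot\frac{2m}{m-1} = \frac{4m}{(m-1)^2},
\]
which is valid for every real $m>1$, since both denominators are then nonzero.

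To make the ``single algebraic relation'' interpretation transparent, I would also use the structural identity from the proof of Theorem~\ref{thm:universal}. There the coefficient cancellation forced $\ell = \gamma = m\alpha$. Hence $\alpha\ell = m\alpha^2$, and substituting $\alpha = 2/(m-1)$ recovers \eqref{eq:product}.

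This second route shows the product is really $\gamma\alpha$: the exponent of $U^m$ times the edge exponent. It also identifies which two mechanisms the relation ties together, namely power matching and coefficient cancellation.

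There is no genuine obstacle here. The only care needed is to state the domain $m>1$, on which $\alpha,\ell>0$, so the product is positive and well defined. I might add the sanity checks $m=2$ giving $\alpha\ell = 8 = 2\cdot 4$ and $m=3$ giving $\alpha\ell = 3 = 1\cdot 3$, consistent with \eqref{eq:k22-solution} and \eqref{eq:k33-solution}.
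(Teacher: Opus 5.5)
Your proposal is correct and matches the paper, which gives no separate proof and treats the corollary as an immediate consequence of Theorem~\ref{thm:universal}, obtained by multiplying \eqref{eq:alpha} and \eqref{eq:ell}. Your second route, $\alpha\ell = m\alpha^2$ via $\ell = \gamma = m\alpha$, is a valid repackaging of the same computation.
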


This product decreases monotonically from $8$ at
$m = 2$ toward $0$ as $m \to \infty$. Since $\alpha$ governs boundary
regularity and $\ell$ governs width, sharper
edges necessarily accompany narrower supports within the K(m,m) family.

\begin{corollary}[Speed Coefficient Bounds]\label{cor:speed-bounds}
The speed coefficient $k(m) = (m+1)/(2m)$ decreases monotonically
from $k(2) = 3/4$ toward $k(\infty) = 1/2$ as $m \to \infty$.
\end{corollary}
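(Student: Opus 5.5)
The statement to prove is Corollary~\ref{cor:speed-bounds}: with $k(m) = c/A^{m-1}$, the speed--amplitude relation~\eqref{eq:speed} of Theorem~\ref{thm:universal} gives $k(m) = (m+1)/(2m)$ for every real $m > 1$. The plan is to rewrite this as
\[
k(m) = \frac{1}{2} + \frac{1}{2m}
\]
and read off the claimed properties from that form.

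\emph{Step 1 (monotonicity).} Since $m \mapsto 1/(2m)$ is strictly decreasing on $(0,\infty)$, $k$ is strictly decreasing on $(1,\infty)$. The derivative confirms this directly: $k'(m) = -1/(2m^2) < 0$.

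\emph{Step 2 (endpoint values).} At $m = 2$ we get $k(2) = 1/2 + 1/4 = 3/4$, matching the K(2,2) relation $c = (3/4)A$. As $m \to \infty$, $1/(2m) \to 0$, so $k(m) \to 1/2$. Because the term $1/(2m)$ is strictly positive, the infimum $1/2$ is approached but never attained. Hence $1/2 < k(m) \le 3/4$ for all $m \ge 2$.

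\emph{Step 3 (remark on the range $1 < m < 2$).} The phrase ``from $k(2)$'' describes the integer family starting at $m = 2$. For real $m \in (1,2)$ the same formula gives $k(m) \in (3/4, 1)$, and $k(m) \to 1$ as $m \to 1^+$. I would note this briefly so the statement is not misread as claiming $k \le 3/4$ on all of $m > 1$.

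There is no genuine obstacle here: the only inputs are~\eqref{eq:speed} and elementary calculus. The one point that needs care is stating precisely which range of $m$ the bound $3/4$ refers to, which Step~3 addresses.
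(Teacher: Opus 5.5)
Your proposal is correct and matches the paper, which gives no separate proof and treats the corollary as an immediate consequence of the speed relation~\eqref{eq:speed}; writing $k(m) = \tfrac12 + \tfrac{1}{2m}$ and noting $k'(m) = -1/(2m^2) < 0$ is exactly the elementary check intended. Your remark that $k(m) \in (3/4, 1)$ for $1 < m < 2$ is correct, and it usefully clarifies that the bound $3/4$ applies only for $m \ge 2$.
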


Higher nonlinearity reduces the speed gained per unit amplitude
increase. In the limit $m \to \infty$, the speed--amplitude
relation approaches $c = A^{m-1}/2$, where the growing exponent
compensates for the halved coefficient.

\begin{corollary}[Monotonicity along the compacton family]\label{cor:VK}
For all $m > 1$ and $A > 0$, the speed--amplitude relation satisfies
$dc/dA > 0$:
\begin{equation}\label{eq:VK}
\frac{dc}{dA} = \frac{(m+1)(m-1)}{2m}\,A^{m-2} > 0.
\end{equation}
Equivalently, the mass $I_0 = C_m A$, with
$C_m = \ell\,B\bigl(\frac{\alpha+1}{2}, \frac12\bigr)$
from~\eqref{eq:mass-evaluated}, and the translation generator
$P = \tfrac12 I_4 = (m+1)\,I_1/(2c)$, which is proportional to
$c^{2/(m-1)}$, are strictly increasing functions of~$c$.
\end{corollary}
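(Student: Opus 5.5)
The plan is to differentiate the speed relation~\eqref{eq:speed} of Theorem~\ref{thm:universal} directly, and then to express the two conserved quantities as explicit powers of~$c$ using the width independence of Corollary~\ref{cor:width}.

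\emph{Speed.} From $c = \frac{m+1}{2m}A^{m-1}$ we get $dc/dA = \frac{(m+1)(m-1)}{2m}A^{m-2}$. This gives~\eqref{eq:VK}, and it is positive because $m>1$ and $A>0$. Consequently $A \mapsto c$ is a strictly increasing bijection of $(0,\infty)$ onto itself, with inverse
\[
A = \Bigl(\tfrac{2m}{m+1}\,c\Bigr)^{1/(m-1)}.
\]
Any quantity that is a positive multiple of a positive power of~$A$ is therefore strictly increasing in~$c$.

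\emph{Mass.} Substitute $\xi=\ell\theta$ into $I_0=\int U\,d\xi$:
\[
I_0 = A\ell\int_{-\pi/2}^{\pi/2}\cos^\alpha\theta\,d\theta = A\,\ell\,B\bigl(\tfrac{\alpha+1}{2},\tfrac12\bigr),
\]
which is the standard Beta integral presumably recorded in~\eqref{eq:mass-evaluated}. The key point is that $\ell$ and $\alpha$ depend only on~$m$ (Corollary~\ref{cor:width}). Hence $C_m>0$ is independent of~$A$, and $I_0 = C_m A \propto c^{1/(m-1)}$ is strictly increasing.

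\emph{Translation generator.} The same substitution gives $I_1=\int U^{m+1}\,d\xi/(m+1) = A^{m+1}\ell\,B\bigl(\tfrac{(m+1)\alpha+1}{2},\tfrac12\bigr)/(m+1)$, where $I_1$ is the energy $\int u^{m+1}/(m+1)$. Granting the identity $P=\tfrac12 I_4=(m+1)I_1/(2c)$ as stated (it should come from the Hamiltonian/virial relation for traveling waves in \S\ref{subsec:hamiltonian-geometry}), we obtain $P \propto A^{m+1}/c$. Since $A^{m-1}\propto c$, we have $A^{m+1}\propto c^{(m+1)/(m-1)}$, and so
\[
P\propto c^{(m+1)/(m-1)-1}=c^{2/(m-1)}.
\]
The positive proportionality constant depends only on~$m$, so $P$ is strictly increasing.

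The main obstacle is justifying the relation $P=(m+1)I_1/(2c)$ if it has not already been established. I would derive it by pairing the traveling-wave ODE~\eqref{eq:tw-ode} with the nonlocal density of $I_4$. For that density, I would use the Hamiltonian structure with $\mathfrak D$, under which a traveling wave is a critical point of $H + cP$. I would then integrate by parts using the boundary decay $(U^m)'\to0$ from the proof of Theorem~\ref{thm:universal}. If that route is cumbersome, monotonicity of~$P$ still follows once $P$ is shown to be a positive multiple of a fixed integral of $\cos^{\beta}\theta$ times a positive power of~$A$. That form follows from the scaling $U=A\,\Phi_m(\xi)$ with a profile $\Phi_m$ independent of~$A$.
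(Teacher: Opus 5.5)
Your computation of $dc/dA$ and your treatment of the mass are the same as the paper's, and the final scaling $P\propto A^{m+1}/c\propto c^{2/(m-1)}$ is correct. The gap is that you assume $P=(m+1)I_1/(2c)$. That identity is part of what must be shown, and it is the only place where positivity, and hence monotonicity, of $P$ is not automatic.

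The paper closes this in one line. The once-integrated ODE~\eqref{eq:tw-ode} is $(D_x^2+1)U^m=cU$; the edge condition $(U^m)'\to0$ from the proof of Theorem~\ref{thm:universal} is what makes it hold on all of $\mathbb R$. On the Fourier side this reads $(1-k^2)\widehat{U^m}=c\,\hat U$. That forces $\hat U(\pm1)=0$, so $U$ lies in the domain of the pseudoinverse, and gives $(D_x^2+1)^{-1}U=U^m/c$. Hence $I_4=c^{-1}\int U^{m+1}\,dx=(m+1)I_1/c$. No further integration by parts is needed. Even if kernel terms $\cos x,\sin x$ appeared in the preimage, they would drop out on pairing with $U$ by Proposition~\ref{prop:trig-vanishing}.

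Your variational sketch would reach the same place, but you have the sign wrong. Since $\mathfrak D\,\delta P/\delta u=-u_x$, a traveling wave satisfies $\delta\mathcal H/\delta u=c\,\delta P/\delta u+\kappa$. It is therefore critical for $\mathcal H-cP$ modulo Casimirs, not $\mathcal H+cP$. Your sign would give $I_4=-(m+1)I_1/c<0$, so $P$ would decrease in $c$.

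Your fallback does not close the gap. The scaling $U=A\Phi_m$ only gives $I_4=K_mA^2$ with $K_m=\int\Phi_m(D_x^2+1)^{-1}\Phi_m\,d\xi$. The sign of $K_m$ is exactly what is at stake. The multiplier $1/(1-k^2)$ is negative for $|k|>1$, so $\int u\,(D_x^2+1)^{-1}u$ is an indefinite quadratic form, and homogeneity in $A$ says nothing about positivity. To write $K_m$ as a positive integral of a power of $\cos\theta$, you need precisely the identity $(D_x^2+1)^{-1}\Phi_m=\Phi_m^m/k(m)$ from the ODE. So the fallback depends on the same step you were trying to avoid.
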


The expression for $P$ follows from $(D_x^2+1)U^m = cU$, which gives
$(D_x^2+1)^{-1}U = U^m/c$.  The slope condition of Vakhitov and
Kolokolov~\cite{vakhitov1973stationary} and of the
Grillakis--Shatah--Strauss framework~\cite{grillakis1987stability}
concerns $dP/dc$, the derivative of the conserved quantity that
generates the symmetry, here translation; the mass $I_0$ is a
Casimir and does not enter it.  Even with $P$ the framework yields
no stability conclusion for K(m,m).  Because $\mathcal{H}$ has no
quadratic part, the second variation of $\mathcal{H} - cP$ at a
compacton reduces to $-c\,(D_x^2+1)^{-1}$ on perturbations supported
away from it, and is negative on slowly varying ones, so it has
infinitely many negative directions where the theory requires one.
Stability of compactons therefore remains open
(Problem~3 of Appendix~\ref{app:open-problems}).

\subsubsection*{Structural contrast with integrable systems}

In the absence of spectral theory, the universal formulas establish
an organizing principle different from, but comparable in kind to,
the inverse scattering transform for KdV.
For integrable equations, the IST yields exact multi-soliton formulas
from spectral data: soliton parameters are encoded in discrete
eigenvalues and norming constants of the Lax
operator~\cite{lax1968integrals,ablowitz1981solitons}.
For K(m,m), no spectral theory exists: there are no Lax pairs,
no recursion operators, no scattering data.
What remains is \emph{scaling universality}: three closed-form
relationships predicting all compacton properties from the single
parameter~$m$. The methods are different, the machinery is different,
but the output is the same kind of mathematical knowledge: algebraic
formulas that govern a family of nonlinear wave solutions.

A notable structural consequence is the
identity $\ell - \alpha = 2$, which holds for all $m > 1$.
This identity implies that the trigonometric conservation
integrals $\int U\sin(x)\,dx$ and $\int U\cos(x)\,dx$ vanish
identically for every K(m,m) compacton, a property noted
empirically for $K(2,2)$ in~\cite{rosenau1993compactons} and proved in
full generality in Proposition~\ref{prop:trig-vanishing}
and Appendix~\ref{app:jacobian}.  This vanishing has significant
consequences for the collision constraint analysis in
\S\ref{subsec:quasi-elastic}.

The proof makes no use of $m$ being an integer. For any real
$m > 1$, the formulas~\eqref{eq:alpha}--\eqref{eq:speed} define a
compacton profile that formally satisfies the traveling wave ODE.
We emphasize that Theorem~\ref{thm:universal} establishes the
\emph{existence} of cosine-power compactons; the question of whether other compact-support traveling
waves exist with different profiles remains open
(Problem~1 in Appendix~\ref{app:open-problems}).
The continuous dependence on~$m$ opens the possibility of studying
K(m,m) equations along continuous paths in parameter space, with
regularity transitions occurring at $m = 3$ (where $\alpha$ crosses
unity) serving as a codimension-one bifurcation. The regularity
classification that follows from Theorem~\ref{thm:universal} is
developed in the next subsection.

\subsection{Regularity classification}
\label{subsec:regularity}

The edge exponent $\alpha = 2/(m-1)$ governs more than the rate of
approach to zero.  It determines the entire regularity landscape of
K(m,m) compactons, classifying each member of the family into one of
three qualitatively distinct regimes.  This classification follows
directly from Theorem~\ref{thm:universal}: once $\alpha$ is known as a function of~$m$, the
regularity character at the support boundary is determined by
comparing~$\alpha$ with unity.

\subsubsection*{Three regularity regimes}

Near the support boundary $\xi_0 = \pi\ell/2$, the compacton
profile satisfies $U(\xi) \sim B(\xi_0 - \xi)^{\alpha}$ with
derivative $U'(\xi) \sim -\alpha B(\xi_0 - \xi)^{\alpha - 1}$,
so the classification reduces to the sign of~$\alpha - 1$.
For $m < 3$ ($\alpha > 1$), the derivative vanishes at the boundary
and the compacton is at least $C^1$; the traveling wave
ODE~\eqref{eq:tw-ode} holds pointwise.  At $m = 3$ ($\alpha = 1$),
the derivative reaches a nonzero limit, producing a cusp where
$U'$ jumps discontinuously; K(3,3) with $U(\xi) = A\cos(\xi/3)$
is the canonical example.  For $m > 3$ ($\alpha < 1$), the derivative
diverges:
\begin{equation}\label{eq:deriv-divergence}
U'(\xi) \sim -\alpha B (\xi_0 - \xi)^{\alpha - 1}
\to -\infty \quad \text{as } \xi \to \xi_0^-.
\end{equation}
Solutions in this regime belong to $C^{0,\alpha}$
(H\"older continuous with exponent~$\alpha$) but not to~$C^1$,
and a weak formulation becomes
necessary~\cite{rosenau2018compactons}.
Table~\ref{tab:regularity} catalogs the classification across the
family.

\subsubsection*{The cusp transition as a codimension-one event}

As~$m$ increases through~$3$, the edge exponent $\alpha = 2/(m-1)$
crosses unity and the qualitative character of boundary regularity
changes: the derivative transitions from vanishing ($m < 3$) through
a finite jump ($m = 3$) to divergence ($m > 3$).  This is a
codimension-one event in parameter space.  The transition has
analytical significance: in variational stability
theory~\cite{grillakis1987stability,grillakis1990stability},
regularity at solitary wave boundaries determines the function
spaces in which an analysis can be posed, and the cusp at $m = 3$
is exactly the point where this determination changes for K(m,m).

\begin{table}[ht]
\centering
\renewcommand{\arraystretch}{1.3}
\begin{tabular}{@{}cccll@{}}
\toprule
$m$ & $\alpha$ & Boundary behavior of $U'$
  & Regularity & Regime \\
\midrule
2   & 2    & $U'(\xi_0^-) = 0$, vanishes quadratically
  & $C^1$       & Smooth \\
3   & 1    & $U'(\xi_0^-) \neq 0$, jump discontinuity
  & $C^0$       & Cusp \\
4   & 2/3  & $|U'| \to \infty$ as $(\xi_0\!-\!\xi)^{-1/3}$
  & $C^{0,2/3}$ & Fractional \\
5   & 1/2  & $|U'| \to \infty$ as $(\xi_0\!-\!\xi)^{-1/2}$
  & $C^{0,1/2}$ & Fractional \\
6   & 2/5  & $|U'| \to \infty$ as $(\xi_0\!-\!\xi)^{-3/5}$
  & $C^{0,2/5}$ & Fractional \\
7   & 1/3  & $|U'| \to \infty$ as $(\xi_0\!-\!\xi)^{-2/3}$
  & $C^{0,1/3}$ & Fractional \\
\midrule
$\infty$ & $0^+$ & $|U'| \to \infty$, sharpening toward 
  & $C^{0,0^+}$ & square-wave limit \\
\bottomrule
\end{tabular}
\caption{Regularity classification of K(m,m) compactons at the
support boundary.  The edge exponent $\alpha = 2/(m-1)$ from
Theorem~\ref{thm:universal} determines each entry.}
\label{tab:regularity}
\end{table}

\subsubsection*{Weak solutions and distributional formulation}

For $m > 3$, the classical strong form of the traveling wave ODE
breaks down at the support boundary where $U'$ diverges.  A weak
(distributional) formulation replaces pointwise validity with
integral conditions: a bounded measurable function $u$ is a weak
solution if
\begin{equation}\label{eq:weak-formulation}
\int_0^T \!\!\int_{\mathbb{R}}
  \bigl[u\,\varphi_t + u^m\,\varphi_x + u^m\,\varphi_{xxx}\bigr]
  \,dx\,dt = 0
\end{equation}
for all test functions $\varphi \in C_c^{\infty}(\mathbb{R}
\times (0,T))$.
All five conservation laws extend to this weak setting, a fact
essential for the constraint analysis in~\S\ref{subsec:quasi-elastic};
the detailed argument appears in
Appendix~\ref{app:weak-solutions}.

\subsubsection*{Limiting behavior and connections}

As $m \to \infty$, the edge exponent $\alpha \to 0^+$ and the
support width $W = \pi\ell \to 2\pi$.  The compacton profile
approaches a limiting shape square-wave pulse.   
The K(m,m) family provides
a one-parameter path from smooth K(2,2) compactons through the
cusp at K(3,3) toward square-wave-like structures, with regularity
degrading monotonically along this path.  Whether the $m \to \infty$
limit yields a well-defined singular solution, which remains an open
question (Appendix~\ref{app:open-problems}).

\subsection{Finite conservation laws}
\label{subsec:conservation-laws}

Integrable equations carry infinite hierarchies of conservation laws,
generated through recursion operators from bi-Hamiltonian
structure~\cite{magri1978simple,fokas1981symplectic}.  These
hierarchies encode the algebraic backbone of
integrability: each new conservation law constrains the dynamics
further, and taken together they determine the solution completely.
What happens when that hierarchy terminates?

For K(m,m) equations, Vodov\'a~\cite{vodova2013complete} answered
this question through an exhaustive computational classification.

\begin{theorem}[Finite Conservation Hierarchy
  {\cite{vodova2013complete}}]
\label{thm:finite-hierarchy}
For $m \neq -2, -1/2, 0, 1$, the K(m,m) equation~\eqref{eq:kmm}
admits exactly four functionally independent local conservation laws.
In addition, one nonlocal conservation law
exists~\cite{rosenau2018compactons}, bringing the
known count of independent conserved quantities to five.  Whether
additional nonlocal laws exist remains open
(see the discussion following this theorem).
\end{theorem}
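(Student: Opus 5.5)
The theorem has two parts: an exact count of local conservation laws, which comes from Vodov\'a's classification, and the existence of one nonlocal law. I would prove the two parts separately. For the nonlocal part, the key is the factorization of the flux.

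\emph{Existence of the local laws.} Write~\eqref{eq:kmm} as $u_t + D_x(D_x^2+1)u^m = 0$. I would exhibit the four densities and check each conservation identity directly.
\begin{itemize}
\item[] Mass $u$ is conserved trivially, since the equation is already in conservation form.
\item[] For the energy density $u^{m+1}/(m+1)$, multiply the equation by $u^m$. Then $u^m D_x(D_x^2+1)u^m$ is an exact $x$-derivative, because $fD_x^3f + fD_xf = D_x\bigl(ff_{xx} - \tfrac12 f_x^2 + \tfrac12 f^2\bigr)$.
\item[] For the densities $u\cos x$ and $u\sin x$, use the fact that $\cos x$ and $\sin x$ lie in the kernel of the adjoint $-(D_x^3+D_x)$. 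Multiplying the equation by either one and integrating by parts then leaves a pure flux.
\end{itemize}
The nonlocal law is $I_4 = \int u\,(D_x^2+1)^{-1}u\,dx$ in the sense of Rosenau--Zilburg. To verify it, differentiate in time and use $u_t = -D_x(D_x^2+1)u^m$. The factor $(D_x^2+1)$ cancels against the inverse, which leaves $-2\int u^m D_x u\,dx = 0$. This is the same mechanism that gives $P=\tfrac12 I_4$ in Corollary~\ref{cor:VK}.

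\emph{Independence.} I would show functional independence by writing the variational derivatives $1$, $u^m$, $\cos x$, $\sin x$ and the nonlocal one $2(D_x^2+1)^{-1}u$. These are linearly independent over functions of $(x,t)$ whenever $m\neq 0,1$.

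\emph{Exhaustiveness.} This is the main obstacle: no other local law exists. Here I would invoke Vodov\'a's classification~\cite{vodova2013complete} and reduce in two steps.
\begin{enumerate}
\item Let the conserved density be $\rho(x,t,u,u_x,\dots,u_k)$. Apply the Euler operator to $D_t\rho$ and take the leading-order terms. Comparing the top derivative $u_{2k+3}$ forces $\partial^2\rho/\partial u_k^2 = 0$ modulo trivial densities. Descending induction on $k$ then shows that $\rho$ may be taken to depend only on $(x,t,u)$.
\item Write $\rho=\rho(x,t,u)$ and require $E(D_t\rho)=0$. This gives an overdetermined linear PDE system for $\rho$. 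Its solution space is spanned by $u$, $u^{m+1}$, $u\cos x$ and $u\sin x$, except when special resonances occur at $m=-2,-1/2,0,1$. These resonant values are exactly the excluded exponents, where extra laws or degeneracies appear.
\end{enumerate}
The delicate part is the bookkeeping of the exceptional exponents in the second step. I would treat it by expanding in powers of $u$ and checking the compatibility conditions coefficient by coefficient.

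Finally, the claim that the total count is five rests on "known" laws only. Whether further nonlocal laws exist is explicitly left open, so nothing beyond the verification above needs to be proved.
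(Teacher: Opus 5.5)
Your verification of the four local densities and of $I_4$ is fine; the $I_4$ computation is the paper's Appendix~F.2 argument. You should add that $(D_x^2+1)^{-1}u$ is defined only when $\hat u(\pm1,t)=0$, a condition that propagates in time because $\hat u(1,t)=I_3-iI_2$ is conserved. In the independence step, the characteristics $1,u^m,\cos x,\sin x$ are linearly independent over $\mathbb{R}$, not over functions of $(x,t)$, and only $m\neq0$ is needed there.

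The genuine gap is Step~1 of exhaustiveness, which cannot work as stated. Write the equation as $u_t=F[u]$ with $F=-(D_x^3+D_x)u^m$, and let $Q=E(\rho)$, which has order $2k$. Then $E(D_t\rho)=D_tQ+F_*^\dagger Q$ with $F_*^\dagger=mu^{m-1}(D_x^3+D_x)$, and the coefficient of $u_{2k+3}$ is
\[
-mu^{m-1}\,\frac{\partial Q}{\partial u_{2k}}+mu^{m-1}\,\frac{\partial Q}{\partial u_{2k}}=0
\]
identically. For an evolution equation of order $n$ the two top-order contributions carry the factor $1+(-1)^n$. Such a comparison therefore constrains $\rho$ only for even $n$; for the third-order K(m,m) equation it gives $0=0$ and cannot force $\partial^2\rho/\partial u_k^2=0$.

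Your Step~1 also never uses the value of $m$. It would equally "reduce" every density to order zero for the linear case $m=1$, which has the conserved densities $(\partial_x^j u)^2$ of every order, and for the integrable excluded cases $m=-2,-1/2$. The order reduction is the hard, $m$-dependent core of the theorem. A direct proof must extract obstructions from subleading orders, which amounts to an integrability test.

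The same misreading appears in Step~2. For $\rho=\rho(x,t,u)$, the coefficients of $u_xu_{xx}$ and $u_{xx}$, together with the derivative-free part of the determining equation, give $\rho_u\in\operatorname{span}\{1,u^m,\cos x,\sin x\}$ for every $m\neq0,1$. There is no resonance at $m=-2$ or $m=-1/2$, so these exponents can only enter through the reduction step. They do appear there. Let $f=-mu^{m-1}$ be the coefficient of $u_{xxx}$. The first condition of the symmetry approach is conservation of the canonical density $f^{-1/3}\propto u^{(1-m)/3}$. By your own Step~2 list this holds exactly when $(1-m)/3\in\{0,1,m+1\}$, i.e.\ $m\in\{1,-2,-1/2\}$.

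The paper, following Vodov\'a, bypasses the order-by-order analysis. Since $u_t=\mathfrak{D}\,\delta\mathcal{H}/\delta u$ with $\mathfrak{D}=-(D_x^3+D_x)$, a cosymmetry $\gamma=\delta\rho/\delta u$ of order $K$ is mapped to a symmetry $\mathfrak{D}\gamma$ of order $K+3$. This symmetry is nonzero for $K>0$, because $\ker\mathfrak{D}=\operatorname{span}\{1,\cos x,\sin x\}$. Vodov\'a's bound that no generalized symmetries of order greater than $3$ exist for these $m$ then forces $K=0$. Only after that does the finite enumeration of your Step~2 apply. To close the gap, either invoke that symmetry bound explicitly or actually carry out the subleading-order analysis.
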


Vodov\'a's proof~\cite{vodova2013complete} is structural rather than
computational.  The K(m,m) equation is Hamiltonian with respect to
the third-order Poisson operator
$\mathfrak{D} = -(D_x^3 + D_x)$
(see~\S\ref{subsec:hamiltonian-geometry}).  Any conserved density
$\rho$ yields a cosymmetry $\gamma = \delta\rho/\delta u$, and
$\mathfrak{D}$ maps cosymmetries to symmetries.  Vodov\'a first
establishes (Proposition~2 in~\cite{vodova2013complete}) that no
generalized symmetries of order greater than~$3$ exist.  If $\gamma$
had differential order $K > 0$, then $\mathfrak{D}(\gamma)$ would be
a symmetry of order $3 + K > 3$, contradicting the symmetry bound.
Therefore every cosymmetry depends at most on $x$, $t$, and~$u$, and
consequently every conserved density $\rho$ depends at most on
$x$, $t$, and~$u$.  Enumerating such densities is then a finite
problem, yielding exactly four solutions.

The fifth conserved quantity is the nonlocal functional $I_4$ defined
in~\eqref{eq:momentum} below, whose density involves the integral
operator $(D_x^2 + 1)^{-1}$ and therefore falls outside the scope of
Vodov\'a's local classification.  Independence from $I_0$--$I_3$ follows
because $I_4$ is a nonlocal bilinear functional of~$u$, whereas each
local law has a density depending on~$u$ alone (pointwise in~$x$).  Its conservation follows from the
factorization $\mathfrak{D} = -D_x(D_x^2 + 1)$; the proof is given
below.  Vodov\'a's argument establishes maximality
for local conservation laws through a rigorous symmetry-order bound,
but the question of whether additional nonlocal conserved quantities
exist beyond~$I_4$ remains open; a formal impossibility proof for
higher-order nonlocal densities has not been obtained.

\begin{remark}[Distributional framework]
\label{rem:distributional}
Anco and Gandarias~\cite{anco2024weak} subsequently developed a
rigorous distributional framework for these conservation laws, showing
that weak compacton solutions satisfy the associated continuity
equations in a well-defined distributional sense even at the support
boundary where classical derivatives may not exist.  This framework
places the formal conservation law identities on firm analytical
footing for the class of compacton solutions with finite regularity
at the support edges.
\end{remark}

\subsubsection*{The five conservation laws}

Four of the five laws are local; the fifth is nonlocal, coupling
distant spatial points through an integral operator.  Full
derivations appear in Appendix~\ref{app:conservation-detail}.

\paragraph{Mass.}
\begin{equation}\label{eq:mass}
I_0 = \int_{-\infty}^{\infty} u \, dx.
\end{equation}
Mass conservation follows directly from the divergence structure of
K(m,m); in the Hamiltonian formulation of
\S\ref{subsec:hamiltonian-geometry} it is a Casimir of $\mathfrak{D}$
(a functional whose variational derivative lies in the kernel of
$\mathfrak{D}$, so that it is conserved by every flow of the form
$u_t = \mathfrak{D}(f)$),
while spatial translation invariance corresponds through Noether's
theorem to the nonlocal momentum
$I_4$~\cite{noether1918invariante,olver1993applications,rosenau2018compactons}.

\paragraph{Energy.}
\begin{equation}\label{eq:energy}
I_1 = \int_{-\infty}^{\infty} \frac{u^{m+1}}{m+1} \, dx.
\end{equation}
Energy is the Hamiltonian for the K(m,m) evolution
(\S\ref{subsec:hamiltonian-geometry}), associated with
time-translation invariance through Noether's theorem.

\paragraph{Trigonometric laws.}
\begin{align}
I_2 &= \int_{-\infty}^{\infty} u \sin(x) \, dx,
  \label{eq:trig-sin} \\[3pt]
I_3 &= \int_{-\infty}^{\infty} u \cos(x) \, dx.
  \label{eq:trig-cos}
\end{align}
These project the solution onto its fundamental Fourier mode at
wavenumber $k = 1$: the complex combination $I_3 - iI_2 =
\hat{u}(1,t)$, where $\hat{u}(k,t) = \int u\,e^{-ikx}\,dx$.  Conservation follows from exact cancellation
between the convection and dispersion terms at this specific
wavenumber (Appendix~\ref{app:conservation-detail}).
No higher Fourier modes yield conservation laws.

\paragraph{Nonlocal momentum.}
\begin{equation}\label{eq:momentum}
I_4 = \int_{-\infty}^{\infty} u(x) \,
  (D_x^2 + 1)^{-1}[u](x) \, dx,
\end{equation}
where $(D_x^2 + 1)^{-1}$ denotes the pseudoinverse of
$D_x^2 + 1$: on the Fourier side, it acts as the multiplier
$1/(1-k^2)$, which is well-defined on functions satisfying
$\hat{u}(\pm 1) = 0$, precisely the condition guaranteed by
conservation of $I_2$ and $I_3$ for compacton
solutions.  Rosenau and Zilburg~\cite[Lemma~1]{rosenau2018compactons}
obtained $I_4$ from a Lagrangian formulation of K(n,n), as the
momentum associated with translation invariance.  It falls outside
Vodov\'a's local classification and instead arises from the factorization of the Hamiltonian
operator $\mathfrak{D} = -D_x(D_x^2 + 1)$.  Conservation
of~$I_4$ requires $m = n$ and thus reflects the specific
algebraic structure of K(m,m); the proof is given in
Appendix~\ref{app:conservation-detail}.

\subsubsection*{Structural contrast with integrable hierarchies}

A direct comparison highlights what the finite hierarchy means
(Table~\ref{tab:conservation-comparison}).

\begin{table}[ht]
\centering
\small
\setlength{\tabcolsep}{4pt}
\renewcommand{\arraystretch}{1.3}
\begin{tabular}{@{}lcc@{}}
\toprule
Feature & KdV & K(m,m) \\
\midrule
Conservation law count & $\infty$ & $5$ (known) \\
Generation mechanism & Lenard recursion via
  $\mathcal{R} = \mathcal{J}_2\mathcal{J}_1^{-1}$ & None \\
Highest-order density & Unbounded & $I_4$ (nonlocal) \\
Algebraic closure & $\{I_j, I_k\} = 0$ for all $j, k$
  & $\{I_j, I_k\} = 0$ (three Casimirs) \\
Collision implication & Perfect elasticity
  & Quasi-elastic with debris \\
\bottomrule
\end{tabular}
\caption{Conservation law structure: KdV versus K(m,m).  Here
$I_j$ and $I_k$ denote any two conserved functionals of the equation
in question (for K(m,m), $j, k \in \{0, \ldots, 4\}$), and
$\{\cdot, \cdot\}$ is its Poisson bracket.}
\label{tab:conservation-comparison}
\end{table}

In KdV, the recursion operator $\mathcal{R}
= \mathcal{J}_2\mathcal{J}_1^{-1}$ generates conservation laws
indefinitely.  Given the $n$-th conserved density, applying
$\mathcal{R}$ produces the $(n+1)$-st.  The resulting infinite
family is in involution: all Poisson brackets $\{I_j, I_k\}$
vanish, providing the complete integrability in the sense of
Liouville~\cite{arnold1989mathematical}.  The inverse scattering
transform~\cite{gardner1967method,lax1968integrals} translates
this algebraic structure into explicit multi-soliton solutions.

For K(m,m), no recursion operator exists
(Theorem~\ref{thm:her-classification}), and the hierarchy
terminates.  Theorem~\ref{thm:finite-hierarchy} quantifies the
termination.  The five known laws are
in involution under the Poisson bracket~\eqref{eq:poisson-bracket}
of \S\ref{subsec:hamiltonian-geometry}: three are Casimirs, and,
with $\mathcal{H} = I_1$ the Hamiltonian~\eqref{eq:hamiltonian},
$\{\mathcal{H}, I_4\} = -2\int u^m u_x\,dx = 0$.  The two
non-Casimir functionals generate the K(m,m) time flow,
$X_{\mathcal{H}} = \mathfrak{D}\,\delta\mathcal{H}/\delta u$, and
spatial translation, $X_P = -u_x$ for $P = \tfrac12 I_4$; these
commute, and a traveling wave is the relative equilibrium on which
$X_{\mathcal{H}} = c\,X_P$.  Beyond this pair the known laws furnish no further
independent commuting flows.  That shortage, not any failure of the
laws to commute, marks non-integrability.

\subsubsection*{Origin and structure of the five laws}

The Hamiltonian operator $\mathfrak{D} = -(D_x^3 + D_x)$
accounts for all five known conservation laws through its algebraic
structure.  Its formal kernel $\ker(\mathfrak{D})$,
spanned by $\{1, \cos x, \sin x\}$, generates three Casimir
functionals: mass $I_0$ and the trigonometric pair $I_2$, $I_3$.
(Although $\{1, \cos x, \sin x\} \notin L^2(\mathbb{R})$, their
action as Casimir densities is rigorously defined through the
$L^2$ duality pairing with compacton solutions in
$L^1_c(\mathbb{R})$, the integrable functions of compact support.)
These are conserved for \emph{any} evolution of the form
$u_t = \mathfrak{D}(f)$, regardless of the choice of~$f$.
The fourth local law $I_1$ (energy) is the Hamiltonian itself,
and the nonlocal $I_4$ arises from the factorization
$\mathfrak{D} = -D_x(D_x^2 + 1)$.  The detailed Casimir analysis
and evaluated conservation integrals for individual compactons
appear in Appendix~\ref{app:conservation-detail}.

These five quantities provide the known conservation constraints
governing K(m,m) dynamics.  Their role in controlling collision
outcomes is the subject of~\S\ref{subsec:quasi-elastic}.

\subsection{Hamiltonian geometry without bi-Hamiltonian structure}
\label{subsec:hamiltonian-geometry}

The K(m,m) equation arises naturally as a Hamiltonian system.
Rosenau and Zilburg~\cite[\S 7.1]{rosenau2018compactons} gave Hamiltonian
and Lagrangian formulations of the K(n,n) and K(n+1,n) equations; the
formulation below follows theirs for K(n,n).
Define the functional
\begin{equation}\label{eq:hamiltonian}
\mathcal{H}[u] = \int_{-\infty}^{\infty}
  \frac{u^{m+1}}{m+1} \, dx,
\end{equation}
and the third-order Poisson operator
\begin{equation}\label{eq:poisson-op}
\mathfrak{D} = -(D_x^3 + D_x).
\end{equation}
The K(m,m) equation~\eqref{eq:kmm} is then equivalent to
\begin{equation}\label{eq:ham-evolution}
u_t = \mathfrak{D} \, \frac{\delta \mathcal{H}}{\delta u},
\end{equation}
since $\delta\mathcal{H}/\delta u = u^m$ and
$\mathfrak{D}(u^m) = -(u^m)_{xxx} - (u^m)_x$.
The operator $\mathfrak{D}$ is skew-adjoint and
satisfies the Jacobi identity, making it a valid Poisson
operator~\cite{olver1993applications,vodova2013complete}.
The Hamiltonian $\mathcal{H}$ is itself conserved: the
antisymmetry of $\mathfrak{D}$ ensures
$d\mathcal{H}/dt
= \langle \delta\mathcal{H}/\delta u,\,
\mathfrak{D}\,\delta\mathcal{H}/\delta u \rangle = 0$.
This identifies $\mathcal{H}$ with the energy conservation
law $I_1$ of \S\ref{subsec:conservation-laws}.

This formulation induces a Poisson bracket on the space of
functionals:
\begin{equation}\label{eq:poisson-bracket}
\{F, G\} = \int \frac{\delta F}{\delta u}\,
  \mathfrak{D} \!\left[\frac{\delta G}{\delta u}\right] dx,
\end{equation}
satisfying antisymmetry, bilinearity, and the Jacobi identity.  Any
functional with $\{F, \mathcal{H}\} = 0$ is conserved.  The five
conservation laws from Theorem~\ref{thm:finite-hierarchy} all
satisfy this condition, and the bracket provides a unified algebraic
framework relating them.

A notable feature of the K(m,m) Hamiltonian structure is
the factorization $\mathfrak{D} = -D_x(D_x^2 + 1)$, whose
three-dimensional kernel generates the three Casimirs; the origin of
each conservation law is traced in \S\ref{subsec:conservation-laws}.

\subsubsection*{The bi-Hamiltonian obstruction}

KdV possesses two compatible Hamiltonian structures whose
recursion operator $\mathcal{R}
= \mathcal{J}_2\,\mathcal{J}_1^{-1}$ generates the infinite
conservation hierarchy through the Lenard
scheme~\cite{magri1978simple,fokas1981symplectic}.
Compact support does not by itself rule out such a pair: Olver and
Rosenau~\cite{olver1996tri} built integrable bi-Hamiltonian equations
with compacton solutions from compatible Hamiltonian triples.
K(m,m) admits no such extension.  Systematic searches for a
second Poisson operator compatible with $\mathfrak{D}$ fail for
all $m \geq 2$, a consequence of Vodov\'a's proof that no
generalized symmetries of order greater than~$3$
exist~\cite{vodova2013complete}: a compatible second operator
would induce such symmetries through recursion.  The nonlinear
dispersion $(u^m)_{xxx}$ is the root cause.

Without the compatible pair, three consequences follow: no
recursion operator operates (explaining the hierarchy termination),
the conserved functionals, though in involution, generate only two
commuting flows, time evolution and translation (three are
Casimirs), and the reduction to action-angle variables
is unavailable~\cite{arnold1989mathematical}.
The detailed obstruction analysis appears in
Appendix~\ref{app:hamiltonian-detail}.

\subsubsection*{What persists from the integrable world}

Despite the failure of bi-Hamiltonian structure, K(m,m) retains
substantial geometric machinery from the Hamiltonian framework.

\emph{Variational principles.}  The traveling wave solutions of
Theorem~\ref{thm:universal} arise from the Hamiltonian structure.
Substituting the traveling-wave ansatz $u(x,t) = U(x - ct)$ into
$u_t = \mathfrak{D}(\delta\mathcal{H}/\delta u)$ and integrating
once yields $(D_x^2 + 1)(U^m) = cU$, the traveling wave ODE
whose phase-plane analysis is developed in
\S\ref{sec:shelf-waves}.

\emph{Stability theory.}  The standard route to orbital stability
uses $\mathcal{H} - cP$ as a Lyapunov functional and examines its
second variation at the traveling
wave~\cite{benjamin1972stability,grillakis1987stability}.  For
K(m,m) this route is blocked for every $m > 1$: the second variation
has infinitely many negative directions (see the remark after
Corollary~\ref{cor:VK}).

\emph{Conservation law origins.}  Each of the five laws has a
geometric origin in $\mathfrak{D}$ (\S\ref{subsec:conservation-laws}),
and the local hierarchy is finite, reflecting the finite-dimensional
symmetry algebra established by
Vodov\'a~\cite{vodova2013complete}.

The situation can be summarized concisely.  K(m,m) is Hamiltonian
but not bi-Hamiltonian.  It possesses a Poisson structure
(sufficient for constructing structure-preserving numerical
integrators), variational principles, and a Noether
correspondence, but lacks the recursion mechanism that converts
these ingredients into complete integrability.  The five
conservation laws constrain the dynamics strongly but not
completely.  The gap between strong constraint and complete
determination manifests as quasi-elastic collision behavior:
emerging compactons retain most of their pre-collision identities,
while small but nonzero debris fields absorb the remaining
degrees of freedom (\S\ref{subsec:quasi-elastic}).

\paragraph{Variational characterization of traveling waves.}
Formally, for perturbations in the domain of $(D_x^2+1)^{-1}$
(\S\ref{subsec:conservation-laws}), set $P := \tfrac12 I_4$, so that
$\delta P/\delta u = (D_x^2+1)^{-1}u$
and $\mathfrak{D}\,\delta P/\delta u = -u_x$: $P$ generates
translations.  For a traveling wave $u = U(x - ct)$ one has
$u_t = -cU' = \mathfrak{D}\,\delta\mathcal{H}/\delta u
= c\,\mathfrak{D}\,\delta P/\delta u$, hence
\[
\frac{\delta\mathcal{H}}{\delta u} = c\,\frac{\delta P}{\delta u} + \kappa,
\qquad \kappa \in \ker\mathfrak{D} = \operatorname{span}\{1, \cos x, \sin x\}.
\]
Applying $(D_x^2+1)$ recovers the once-integrated traveling-wave
ODE $-cU + U^m + (U^m)'' = C_0$, with the integration constant $C_0$
equal to the constant Casimir component of~$\kappa$;
\eqref{eq:tw-ode} is the case $C_0 = 0$, and~\eqref{eq:shelf-tw-ode}
is its form on a positive background $u = a$
(\S\ref{sec:shelf-waves}), for which $\mathcal{H}$ and $P$ are
renormalized by subtracting their values at $u \equiv a$ and varied
with respect to $f = U - a$.

\section{Collision dynamics and the constraint-deficit mechanism}
\label{sec:collision-dynamics}

The universal formulas, conservation hierarchy, and Hamiltonian
geometry of the preceding section describe the static structure of
K(m,m) compactons.  This section turns to dynamics: what happens
when compactons collide?  The constraint-deficit mechanism
(Theorem~\ref{thm:constraint-deficit}) provides a precise answer,
and comparison with solitary waves of the Benjamin--Bona--Mahony
(BBM) equation~\cite{benjamin1972model} (\S\ref{subsec:landscape})
reveals that interaction geometry plays an independent role alongside conservation law count.
A concluding subsection positions the constraint-deficit framework
as a potential tool for analyzing near-integrability in other
dispersive PDEs.

\subsection{Quasi-elastic collisions and the constraint-deficit mechanism}
\label{subsec:quasi-elastic}

\begin{figure}[htbp]
\centering
\includegraphics[width=\textwidth]{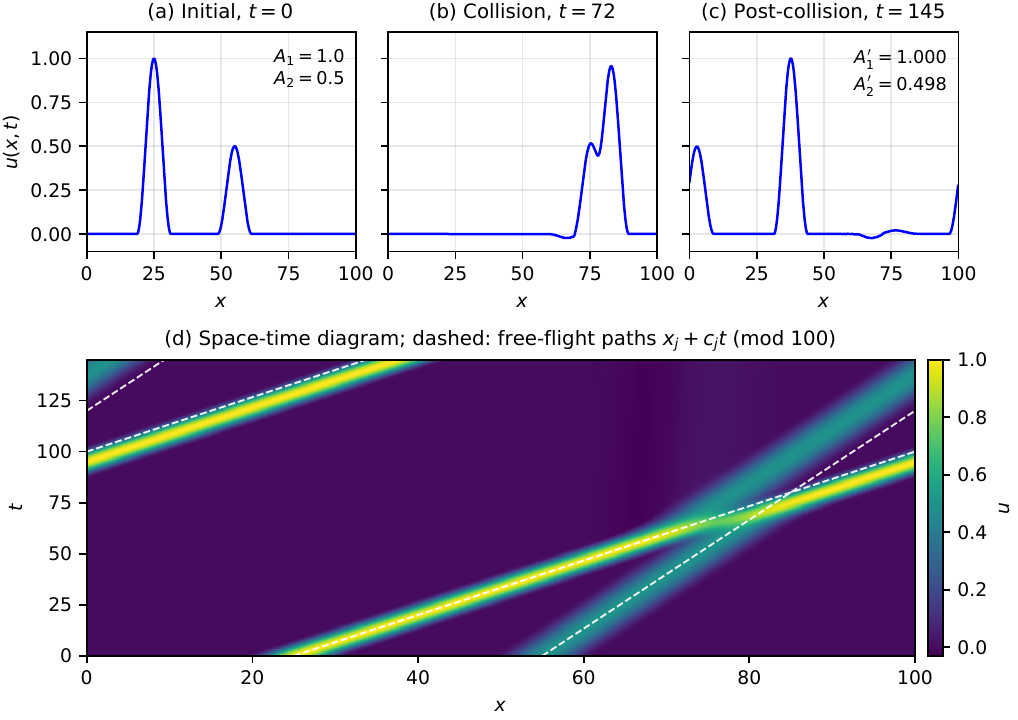}
\caption{Quasi-elastic collision of two K(2,2) compactons with
amplitude ratio $A_2/A_1 = 0.5$ on the periodic domain $[0, 100)$.
(a)~Initial configuration at $t=0$ with $A_1 = 1.0$, $A_2 = 0.5$.
(b)~During collision at $t = 72$.
(c)~Post-collision at $t = 145$.  Both compactons have left through
the right boundary and re-entered at the left, and the smaller one
straddles the boundary.  Primes denote post-collision amplitudes,
$A_1' = 1.000$ and $A_2' = 0.498$; the changes
$|\Delta A_j|/A_j < 0.5\%$ demonstrate quasi-elastic behavior.
(d)~Space-time diagram of $u(x,t)$.  Dashed lines are the
free-flight paths $x_j + c_j t$ (mod 100) with
$c_j = \tfrac34 A_j$.  After the collision the faster compacton runs
ahead of its path by $\delta_1 \approx 3.9$ and the slower one falls
behind by $|\delta_2| \approx 6.6$.
Numerical parameters: pseudospectral method (PSM) with $N_x = 512$ grid points,
joint low-pass (LP) filtering of the flux (Variant~C, cutoff
$k_{\max}/q$ with $q = 2$), biharmonic ramp dissipation with
coefficient $\delta = 0.15$ (distinct from the phase shifts
$\delta_j$), fixed-step fourth-order Runge--Kutta (RK4) time
stepping; see Appendix~\ref{app:numerical-method}.}
\label{fig:collision-k22}
\end{figure}

Two K(m,m) compactons with amplitudes $A_1 > A_2 > 0$ and speeds
$c_1 > c_2$ will eventually collide when the faster wave overtakes
the slower (Figure~\ref{fig:collision-k22}). The post-collision state consists of two principal
compactons with amplitudes $A'_1, A'_2$ (primes mark post-collision
values), asymptotic phase shifts $\delta_1, \delta_2$
(Figure~\ref{fig:collision-k22}(d)), and a residual debris field~$w(x,t)$
of small amplitude spreading dispersively behind the emergent
waves~\cite{rosenau1993compactons,rosenau2018compactons}.
The numerically resolved debris $w = u - u_{\mathrm{compactons}}$
is continuous on the periodic computational domain, hence bounded
and square-integrable, ensuring that its Fourier analysis is well-defined.

Numerical simulations reveal a consistent pattern across the K(m,m)
family (Figure~\ref{fig:collision-k22}): the principal compactons
emerge with amplitudes close to their pre-collision values, while
the debris field carries a small fraction of the total
energy~\cite{rosenau1993compactons,ismail1998numerical}. To quantify
this, define the elasticity coefficient
\begin{equation}\label{eq:elasticity}
\mathcal{E} = \frac{A'_1 + A'_2}{A_1 + A_2},
\end{equation}
measuring the fraction of total amplitude retained by the principal
waves. For K(2,2) with $A_1 = 1.0$, $A_2 = 0.8$, our simulations
(pseudospectral method (PSM), $N_x = 512$ grid points, domain
$[0, 200]$, dissipation coefficient $\delta = 0.15$)
with the principal waves recovered by least-squares fit of the exact
profile, give $A'_1 \approx 1.0008$, $A'_2 \approx 0.7987$, yielding
$\mathcal{E} \approx 0.9997$; these values are unchanged at
$N_x = 1024$.  The debris field $w$ carries approximately $0.03\%$
of the squared $L^2$ norm $\int u^2\,dx$; the principal waves retain all but
$6\times10^{-5}$ of their Hamiltonian energy according to the
fitted amplitudes; at the $10^{-4}$ precision of the fit this is an
upper bound on the true deficit.

\paragraph{Alternative elasticity measures.}
Several natural alternatives to~\eqref{eq:elasticity} exist,
including energy-based and $L^2$-based measures.  For K(m,m)
compactons, all standard measures are related by monotone
transformations of the amplitude ratio, so the within-family
ordering is invariant under the choice of measure; see
Appendix~\ref{app:collision-detail} for the detailed comparison.

These values are consistent with
earlier simulations~\cite{rosenau1993compactons,ismail1998numerical}.
For K(3,3) with $A_1 = 1.5$, $A_2 = 1.0$, the same method gives
$A'_1 \approx 1.5001$, $A'_2 \approx 0.9990$ and
$\mathcal{E} \approx 0.9996$, with debris again near $0.03\%$ of the
squared $L^2$ norm and a fitted-amplitude Hamiltonian deficit of at most
$5\times10^{-4}$, again limited by the fit precision.  Recent high-resolution simulations by
Garral\'on-L\'opez, Rus, and
Villatoro~\cite{garralon2023compacton} have extended these
observations to compacton-anticompacton collisions, where
the interaction geometry is more violent.  Their results
reveal a striking odd-even asymmetry in the nonlinearity
exponent, which they denote~$p$ (our~$m$): for odd $p$ (= 3, 5, 7), collisions remain
robust even with minimal hyperviscosity regularization,
while even $p$ (= 2, 4, 6) produces blow-up unless the
amplitude ratio lies below a threshold.  The same group
established that the absolute-value variant $|K|(p,p)$ admits
compactons for non-integer $p$, extending the parameter space
beyond the integer cases treated
here~\cite{garralon2023robustness}, and developed an adiabatic
perturbation theory quantifying how hyperviscosity affects
compacton evolution~\cite{garralon2024numerical}.  These
numerical findings complement and extend the analytical
framework below.

\paragraph{Numerical method.}
The compact support of K$(m,n)$ compactons poses challenges for standard 
spectral methods, which suffer from Gibbs oscillations at the support 
boundary.  We employ a spectrally-filtered pseudospectral method that
combines joint low-pass filtering of both nonlinear flux terms
(Variant~C, with cutoff wavenumber $k_{\max}/q$ and $q = 2$) with a biharmonic high-pass dissipation operator
concentrated near the compacton boundary by a smooth spatial ramp.
The dissipation coefficient $\delta = 0.15$, scaled by the
instantaneous maximum wave speed $c_{\max} = \tfrac{m+1}{2m}\max|u|^{m-1}$,
is universal across all values of~$m$ tested.
Full details, including the boundary-layer analysis motivating
the $c_{\max}$ scaling, are given in 
Appendix~\ref{app:numerical-method} and in~\cite{hyman2026filters}.
Code implementing these simulations
is available at \texttt{https://github.com/machyman/kmm-compactons}.

These collisions
are neither perfectly elastic nor strongly inelastic. The question
is why.

\paragraph{A shift of perspective.}
Peter Lax spent much of his career analyzing approaches to
eliminate numerical oscillations behind steep fronts, a
recurring challenge in the numerical approximation of shock
waves.  Late in his career, he shifted focus to
\emph{understanding} these oscillations and, with Dave
Levermore, developed the theory of the zero-dispersion limit of
the KdV equation~\cite{lax1983small}, revealing that the
oscillations encode deep information about the underlying
dispersive structure.  In the same spirit, after spending much
of my career focused on elastic soliton collisions, I now turn
to analyzing the small but persistent inelasticity of compacton
collisions.  The constraint-deficit mechanism
(Theorem~\ref{thm:constraint-deficit}) provides a concrete,
quantitative explanation for why these collisions are
near-elastic but not perfectly so, and what the residual debris
reveals about the gap between finite and infinite conservation
hierarchies.

\subsubsection*{Why collisions cannot be perfectly elastic}

Three hypotheses underlie the analysis below.  All are supported by
available numerical evidence but have not yet been proved rigorously.
We state them explicitly so that the logical dependence of the
subsequent results is transparent.

\begin{enumerate}
\item[\textbf{(H1)}] \emph{Two-compacton emergence.}
After a binary collision, the post-collision state consists of
exactly two principal compactons plus a dispersive debris field of
small amplitude.  This behavior has been observed in extensive numerical
studies spanning three decades, including
Rosenau--Hyman~\cite{rosenau1993compactons},
Ismail--Taha~\cite{ismail1998numerical},
Cooper--Mihaila--Saxena~\cite{cooper2001compacton,mihaila2010stability},
and Garral\'on--Rus--Villatoro~\cite{garralon2023compacton,garralon2024numerical}.
No simulation has produced a third compacton or a
non-compacton localized structure; the recent high-resolution
studies~\cite{garralon2023compacton,garralon2024numerical} tracked
collision dynamics over thousands of time units with no
deviation from this pattern.

\item[\textbf{(H2)}] \emph{Profile preservation.}
The emergent compactons take the cosine-power form established in
Theorem~\ref{thm:universal}, with possibly modified amplitudes and
phase shifts.  This hypothesis follows from
Proposition~\ref{prop:uniqueness}: within the class of positive,
symmetric, single-hump traveling waves with compact support, the
cosine-power profile is the unique solution of the traveling wave
ODE\@.  The only additional assumption is that emergent localized
structures are positive and symmetric, a property observed in all
reported numerical simulations.

\item[\textbf{(H3)}] \emph{Conservation law completeness.}
The four local conservation laws classified by
Vodov\'a~\cite{vodova2013complete} together with the nonlocal
functional $I_4$ exhaust the independent conserved quantities.
Vodov\'a's structural proof establishes local completeness
rigorously for all real $m \neq -2, -1/2, 0, 1$; the assumption
here is that no additional nonlocal conserved quantities exist
beyond~$I_4$.
\end{enumerate}

\noindent
Under these hypotheses, the answer to why collisions are not
perfectly elastic lies in the constraint structure of the five
conservation laws.  The following theorem, conditional on hypotheses
(H1)--(H3) above, shows that perfect
elasticity imposes incompatible demands.

\begin{theorem}[Constraint deficit (assuming H1--H3)]\label{thm:constraint-deficit}
For a two-compacton collision in K(m,m), $m \geq 2$:
\begin{enumerate}
\item[\emph{(a)}] The post-collision debris field $w(x,t)$ has
  infinitely many degrees of freedom, parameterized by its Fourier
  spectrum $\{\hat{w}(k)\}_{k \in \mathbb{R}}$.
\item[\emph{(b)}] The five conservation laws impose five scalar
  constraints on the post-collision state. Two of these (mass $I_0$
  and energy $I_1$) constrain the emergent compacton amplitudes:
  given the debris mass $M_w$ and energy $E_w$, they locally
  determine $(A'_1, A'_2)$ near $(A_1, A_2)$ by the implicit
  function theorem.  Two (the trigonometric laws $I_2$, $I_3$)
  constrain the
  fundamental Fourier mode of the debris field; and one ($I_4$)
  couples the debris to the compacton configuration.
\item[\emph{(c)}] In an integrable system, the infinite conservation
  hierarchy constrains every radiation mode, forcing zero debris.
  In K(m,m), the five laws leave all debris Fourier modes
  $\hat{w}(k)$ with $k \neq 1$ unconstrained.  This gap between
  finitely many radiation constraints and infinitely many radiation
  degrees of freedom is the mechanism by which K(m,m)
  collisions generate debris.  Whether this mechanism extends
  to other non-integrable dispersive systems remains to be established.
\end{enumerate}
\end{theorem}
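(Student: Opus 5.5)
The plan is to write the post-collision state, using (H1) and (H2), as
\[
u(x,t) = U_{A_1'}(x - c_1' t - x_1') + U_{A_2'}(x - c_2' t - x_2') + w(x,t),
\]
where $U_A$ is the cosine-power profile of Theorem~\ref{thm:universal} with $c_j' = \tfrac{m+1}{2m}(A_j')^{m-1}$. Each conserved functional $I_0,\dots,I_4$ is then evaluated on the pre-collision state and on this decomposition, and the two values are equated. Part~(a) is essentially definitional. The debris is bounded and square-integrable, as noted before the hypotheses. The map $w\mapsto\hat w$ is an isometry onto $L^2(\mathbb{R})$ (Plancherel), and this space has infinite dimension. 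So the only thing to record is that nothing in (H1)--(H2) restricts $w$ beyond smallness.

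For (b), I treat the five laws in groups.

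\emph{Mass and energy.} $I_0$ is linear, so it splits exactly as $C_m(A_1'+A_2') + M_w$. The cross terms in $I_1=\int u^{m+1}/(m+1)$ vanish between the two compactons once their supports are disjoint, which holds asymptotically because the widths are fixed (Corollary~\ref{cor:width}) and $c_1'\neq c_2'$. Writing $E_w$ for the debris contribution together with the compacton--debris cross terms, and using $\int U_A^{m+1}\propto A^{m+1}$, the two constraints read
\[
C_m(A_1'+A_2') = I_0 - M_w, \qquad D_m\bigl[(A_1')^{m+1}+(A_2')^{m+1}\bigr] = I_1 - E_w .
\]
The Jacobian of this map with respect to $(A_1',A_2')$ is proportional to $(A_2')^m-(A_1')^m$. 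It is nonzero at $(A_1,A_2)$ because $A_1>A_2$, and the implicit function theorem then gives local determination.

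\emph{Trigonometric laws.} By Proposition~\ref{prop:trig-vanishing}, every compacton satisfies $\int U\,e^{-ix}\,dx=0$, which is the consequence of $\ell-\alpha=2$. Conservation of $I_3-iI_2=\hat u(1,t)$ together with the vanishing for the incoming compactons gives $\hat w(1)=0$. Since $u$ is real, this also gives $\hat w(-1)=0$. These are two real constraints, confined to the fundamental mode.

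\emph{Nonlocal momentum.} $I_4$ is bilinear, so it expands into diagonal compacton terms, compacton--debris cross terms, and a $w$--$w$ term. On the Fourier side the $w$--$w$ term is $\int |\hat w|^2/(1-k^2)\,dk$. The last step is to show that this one scalar equation relates the debris to the compacton data: the diagonal terms are proportional to $(A_j')^{m+1}/c_j'$ by Corollary~\ref{cor:VK}, and the positions $x_j'$ enter only through the cross terms.

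For (c), I first note that (H3) is exactly what makes the list of constraints exhaustive. Then I count. Two real constraints are $\hat w(\pm1)=0$. The remaining three are single integral functionals, and each is satisfied on a codimension-one set of $w$. Hence the admissible debris set has finite codimension at most five in an infinite-dimensional space. In particular, for any $k_0\neq1$, perturbing $w$ by a small bump in Fourier space near $k_0$ can be compensated through $M_w$, $E_w$ and the $I_4$ balance by adjusting $(A_1',A_2')$ via the IFT, and the phases. So the value of $\hat w(k_0)$ is not forced. The contrast with the integrable case is the standard IST statement, which the paper quotes and is not reproved.

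I expect the main obstacle to be the $I_4$ term, since $(D_x^2+1)^{-1}$ is defined only when $\hat u(\pm1)=0$ and the multiplier $1/(1-k^2)$ is singular at $k=\pm1$. I would first verify that the $I_2,I_3$ constraint already puts $w$ in the domain. Then I would need $\hat w$ to vanish to sufficient order at $\pm1$ (or impose a mild smoothness assumption) so that $\int|\hat w|^2/(1-k^2)\,dk$ converges, treating it as a principal value if necessary. The second delicate point is the asymptotic decoupling of the cross terms. The compacton--compacton terms vanish once supports separate, but the debris overlaps the compactons, so I would absorb those cross terms into $E_w$ and into the $I_4$ coupling rather than try to show they decay.
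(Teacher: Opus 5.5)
Parts (a) and (b) follow the paper's own argument: the same two-compacton-plus-debris decomposition, the same grouping of $I_0,\dots,I_4$, Proposition~\ref{prop:trig-vanishing} to get $\hat w(\pm1)=0$, and $I_4$ as the coupling term. Your Jacobian $C_m(m+1)D_m\bigl[(A_2')^m-(A_1')^m\bigr]\neq0$ usefully replaces the paper's ``generically''. Your worry about $I_4[w]$ is milder than you think. If $w$ is, say, compactly supported, then $\hat w$ is entire, and $\hat w(\pm1)=0$ alone gives $|\hat w|^2/(1-k^2)=O(|k\mp1|)$ near $\pm1$. The integral therefore converges absolutely, with no principal value or higher-order vanishing needed.

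The step that fails as written is the compensation argument in (c).

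\emph{The phases drop out once the waves separate.} This is the regime of Theorem~\ref{thm:debris-bound}. There $(D_x^2+1)^{-1}U_j=U_j^m/c_j$ is supported in $\operatorname{supp}U_j$. Hence every cross term in $I_4$ vanishes, as in $I_1$, and the phases enter none of the five laws.

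\emph{$I_4$ then constrains $w$ itself.} $I_0,I_1,I_4$ are three equations for only the two compacton unknowns $(A_1',A_2')$. Corollary~\ref{cor:VK} gives $I_4[U_A]=2mD_mA^2$. After the implicit function theorem has used $I_0$ and $I_1$, the balance $2mD_m(A_1'^2+A_2'^2)+I_4[w]=I_4^{\mathrm{pre}}$ is a genuine scalar constraint on $w$. This is the paper's remark in Appendix~\ref{app:jacobian} that the constraint ``passes to the debris''.

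\emph{So your compensation does not work.} Moving $\hat w(k_0)$ changes $E_w$ and $I_4[w]$, so it generally breaks that balance, and neither the amplitudes nor the phases can restore it. In the overlapping picture you adopt, the phases enter only through compacton--debris cross terms of size $O(\|w\|)$. Relying on them needs a nondegeneracy you have not checked, and it is lost as the compactons outrun the debris. Finite codimension alone does not rescue the claim either: a finite-codimension set can pin a coordinate, as $\{\hat w(1)=0\}$ does.

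\emph{The repair} is to spend a second debris direction, for instance $M_w=\hat w(0)$, via a perturbation with no content at $k=\pm1$. Shifting $M_w$ by $\mu$ at fixed $E_w$ changes $A_1'^2+A_2'^2$, to first order, by $-2\mu A_1A_2(A_1^{m-1}-A_2^{m-1})/\bigl[C_m(A_1^m-A_2^m)\bigr]\neq0$. So for small debris the implicit function theorem in $(A_1',A_2',M_w)$ closes, and $\hat w(k_0)$ is indeed not forced. Also take $k_0\neq\pm1$, not just $k_0\neq1$, since you already derived $\hat w(-1)=0$.
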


\begin{proof}[Argument assuming H1--H3]
Part~(a): By Theorem~\ref{thm:universal}, each post-collision
compacton is determined by a single amplitude parameter $A'_i$ and
a phase shift $\delta_i$.  The debris field $w(x,t)$ adds infinitely
many degrees of freedom through its Fourier spectrum.

Part~(b): The five conservation
laws~\eqref{eq:mass}--\eqref{eq:momentum} provide five independent
scalar equations relating pre- and post-collision quantities.  We
partition these by what they constrain.

\emph{Amplitude constraints.}  Mass conservation gives
$C_m(A'_1 + A'_2) + M_w = C_m(A_1 + A_2)$ and energy conservation
gives $D_m(A'^{\,m+1}_1 + A'^{\,m+1}_2) + E_w = D_m(A_1^{m+1}
+ A_2^{m+1})$, where $C_m, D_m$ are the mass and energy integral
coefficients of~\eqref{eq:mass-evaluated}--\eqref{eq:energy-evaluated}.  In the small-debris regime ($M_w, E_w \ll 1$),
these two equations in two unknowns $(A'_1, A'_2)$ generically
have a unique solution close to $(A_1, A_2)$.

\emph{Radiation constraints.}  A structural property of K(m,m)
compactons is that the trigonometric integrals $I_2 = \int u\sin
x\,dx$ and $I_3 = \int u\cos x\,dx$ vanish identically for every
compacton, regardless of amplitude or position
(Proposition~\ref{prop:trig-vanishing} below).  Conservation of
$I_2$ and $I_3$ therefore constrains the debris alone:
$I_2[w] = 0$ and $I_3[w] = 0$, forcing the fundamental Fourier
mode ($k = 1$) of the debris to vanish.

\emph{Nonlocal constraint.}  The nonlocal momentum $I_4$ couples
the compacton configuration to the debris through the
pseudoinverse $(D_x^2 + 1)^{-1}$.

Part~(c): In an integrable system possessing infinitely many
conservation laws of successively higher order, each law
constraining a progressively higher moment or Fourier component
of the solution (a heuristic correspondence, not a proved
structural statement for general integrable systems).
The infinite collection leaves no free radiation modes, which is
the mechanism underlying the zero-debris
theorem~\cite{ablowitz1981solitons}
(Proposition~\ref{prop:integrable-endpoint}).  In K(m,m), the
hierarchy
terminates at five.  The debris Fourier modes at wavenumbers
$k \neq 1$ are unconstrained by any conservation law.  The
nonlinear collision dynamics naturally excites these modes, and
nothing in the conservation structure forces the excitation to
vanish.  This is the constraint deficit: a finite number of
radiation constraints acting on an infinite-dimensional radiation
field.
\end{proof}

\begin{remark}[Conditional status and paths to proof]
\label{rem:conditional-status}
Theorem~\ref{thm:constraint-deficit} rests on hypotheses (H1)--(H3),
which have different levels of support.
Hypothesis~(H3) rests on Vodov\'a's structural
classification~\cite{vodova2013complete}, which rigorously establishes
that exactly four local conservation laws exist.  The assumption that
no additional nonlocal conserved quantities exist beyond $I_4$ has
strong support from the absence of a recursion operator but lacks a
formal proof.
Hypothesis~(H2) follows from Proposition~\ref{prop:uniqueness}: the
cosine-power compacton is the unique positive, symmetric, single-hump
compact traveling wave, so any emergent traveling wave with these
properties must take this form.
Hypothesis~(H1) has overwhelming numerical support (three decades of
simulations with no counterexamples) but no analytical proof.  The
main open question is whether collision dynamics necessarily produce
traveling-wave structures; orbital stability of K(m,m) compactons
(Problem~3 in Appendix~\ref{app:open-problems}) would resolve this.
Despite the conditional status, the theorem provides the first coherent
explanation of why K(m,m) collisions produce small but nonzero
debris.
\end{remark}

\begin{proposition}[Trigonometric orthogonality]\label{prop:trig-vanishing}
For every K(m,m) compacton $U(\xi) = A\cos^\alpha\!(\xi/\ell)$
with $\alpha = 2/(m-1)$ and $\ell = 2m/(m-1)$, the trigonometric
conservation integrals vanish:
\[
\int_{-\infty}^{\infty} U(x - x_0)\sin(x)\,dx \;=\; 0, \qquad
\int_{-\infty}^{\infty} U(x - x_0)\cos(x)\,dx \;=\; 0
\]
for all $x_0 \in \mathbb{R}$ and all $A > 0$.
\end{proposition}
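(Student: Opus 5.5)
The plan is to avoid evaluating the integrals directly and instead use the traveling-wave ODE~\eqref{eq:tw-ode}, written as $(D_\xi^2+1)U^m = cU$, together with the fact that $\cos\xi$ and $\sin\xi$ span the kernel of $D_\xi^2+1$. First I reduce to a single integral. Substituting $\xi = x - x_0$ gives
\[
\int U(x-x_0)\,e^{ix}\,dx = e^{ix_0}\int U(\xi)\,e^{i\xi}\,d\xi .
\]
Both stated integrals are the imaginary and real parts of this quantity, so it suffices to show $\int U(\xi)e^{i\xi}\,d\xi = 0$. Since $U$ is even, $\int U(\xi)\sin\xi\,d\xi = 0$ automatically. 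Everything therefore reduces to showing $\int_{-\pi\ell/2}^{\pi\ell/2} U(\xi)\cos\xi\,d\xi = 0$.

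Next I check that the ODE holds classically on all of $\mathbb{R}$, so that integration by parts is legitimate. By Theorem~\ref{thm:universal}, $U^m = A^m\cos^\gamma(\xi/\ell)$ with $\gamma = m\alpha = \ell = 2m/(m-1) > 2$. Formula~\eqref{eq:Um-second} gives $(U^m)''$ as a combination of $\cos^{\gamma-2}\theta = \cos^{\alpha}\theta$ and $\cos^\gamma\theta$, and both tend to $0$ at $\theta = \pm\pi/2$. Hence $U^m$, $(U^m)'$ and $(U^m)''$ all extend continuously by zero, and $U^m \in C^2(\mathbb{R})$. Consequently $(U^m)'' + U^m = cU$ holds pointwise on $\mathbb{R}$, including at the support edges, where both sides vanish. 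This step covers every $m>1$, including the fractional regime $m>3$ in which $U'$ itself blows up. I expect it to be the only point needing care.

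Then I multiply by $\cos\xi$ and integrate over the support $[-\xi_0,\xi_0]$, $\xi_0 = \pi\ell/2$, integrating by parts twice:
\[
c\int U\cos\xi\,d\xi = \Bigl[(U^m)'\cos\xi + U^m\sin\xi\Bigr]_{-\xi_0}^{\xi_0} + \int U^m(-\cos\xi + \cos\xi)\,d\xi .
\]
The boundary terms vanish because $U^m$ and $(U^m)'$ vanish at $\pm\xi_0$, which is the flux condition already used in the proof of Theorem~\ref{thm:universal}. The remaining integrand is identically zero. Since $c = \tfrac{m+1}{2m}A^{m-1} > 0$, we get $\int U\cos\xi\,d\xi = 0$, and the proposition follows for every $x_0$ and $A$.

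As an independent cross-check, and to display the identity $\ell - \alpha = 2$ mentioned in the text, I would also evaluate the integral directly. With $\xi = \ell\theta$ it becomes $2A\ell\int_0^{\pi/2}\cos^\alpha\theta\,\cos(\ell\theta)\,d\theta$. The classical formula
\[
\int_0^{\pi/2}\cos^{\nu}\theta\cos(a\theta)\,d\theta = \frac{\pi\,\Gamma(\nu+1)}{2^{\nu+1}\,\Gamma\!\left(\frac{\nu+a}{2}+1\right)\Gamma\!\left(\frac{\nu-a}{2}+1\right)}
\]
applies with $\nu = \alpha$ and $a = \ell = \alpha+2$. Then $\frac{\nu-a}{2}+1 = 0$, which is a pole of $\Gamma$, so the integral is zero. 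This confirms the ODE argument and makes clear that the vanishing is exactly the width condition $\ell = \alpha + 2$.
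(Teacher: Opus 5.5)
Your proof is correct, but your main argument differs from the paper's. The paper argues exactly as your cross-check does. It expands $\sin(x_0+\ell\theta)$, kills the odd part by parity, and evaluates $B_\alpha=\int_{-\pi/2}^{\pi/2}\cos^\alpha\theta\cos(\ell\theta)\,d\theta$ with the tabulated Gamma-function formula, where $\ell=\alpha+2$ produces $1/\Gamma(0)=0$.

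Your primary route instead tests the once-integrated ODE $(D_\xi^2+1)U^m=cU$ against $\cos\xi$ and integrates by parts. This is the Fourier-side identity $c\,\hat U(k)=(1-k^2)\,\widehat{U^m}(k)$ evaluated at $k=\pm1$. Its only analytic input is that $U^m=A^m\cos^{\ell}(\xi/\ell)$ extends as a $C^2$ function across the support edges, because $\gamma-2=\alpha>0$. You verify this correctly for all $m>1$, including the regime where $U'$ blows up.

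Your route uses neither the explicit profile shape nor a special-function identity. It needs only three things: the ODE with zero integration constant, the flux conditions $U^m=(U^m)'=0$ at the edges, and $c\neq0$. It therefore applies verbatim to any compactly supported traveling wave meeting those conditions, which matters for the open uniqueness question and for hypothesis (H2). It also shows that the "resonance" $\ell-\alpha=2$ in the paper's structural remark is forced by $\cos x,\sin x\in\ker(D_x^2+1)$ rather than being a coincidence of the Gamma function. Equivalently, $I_3-iI_2=\hat u(1,t)=e^{-ict}\hat U(1)$ must stay constant along a traveling wave, and for $c\neq0$ that forces $\hat U(1)=0$.

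The paper's route gives something yours does not: a closed form for $\hat U(k)$ at every wavenumber. This shows exactly which modes of a single compacton vanish, namely $k\ell=\alpha+2+2j$ for $j\ge0$, and ties the $k=1$ zero visibly to the width identity.
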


\begin{proof}
Substituting $\theta = (x - x_0)/\ell$ and expanding the
trigonometric factor gives
$\int U\sin(x)\,dx
= A\ell\sin(x_0)\,B_\alpha + A\ell\cos(x_0)\,S_\alpha$,
where
$B_\alpha = \int_{-\pi/2}^{\pi/2}\cos^\alpha(\theta)
\cos(\ell\theta)\,d\theta$ and $S_\alpha$ is the corresponding
sine integral, which vanishes by parity.  The coefficient
$B_\alpha$ is evaluated using the standard Fourier cosine formula
\[
\int_0^{\pi/2} \cos^a(t)\cos(nt)\,dt
\;=\; \frac{\pi\,\Gamma(a+1)}{2^{a+1}\,
\Gamma\!\bigl(\tfrac{a+n+2}{2}\bigr)\,
\Gamma\!\bigl(\tfrac{a-n+2}{2}\bigr)}.
\]
For K(m,m) compactons, $\ell = \alpha + 2$, so the second Gamma
argument becomes $\Gamma\bigl(\frac{\alpha - \alpha - 2 + 2}{2}
\bigr) = \Gamma(0)$, which has a pole.  Since $1/\Gamma(0) = 0$,
the integral $B_\alpha = 0$ for every $\alpha > 0$.
The identity $\ell - \alpha = 2$ is a direct consequence of
the K(m,m) parameter relations and holds for all $m > 1$.  The
identical argument applies to $\int U\cos(x)\,dx$.  Full details
appear in Appendix~\ref{app:jacobian}.
\end{proof}

\begin{remark}[Structural interpretation of
  Proposition~\ref{prop:trig-vanishing}]
\label{rem:trig-structure}
A resonance between the compacton geometry and the trigonometric
test functions underlies the vanishing $B_\alpha = 0$.  The
compacton support width $\pi\ell$ and the Fourier projection at
wavenumber $k = 1$ are related through the identity
$\ell = \alpha + 2$, which ensures exact cancellation of the
position-dependent contributions.  This orthogonality transforms
the role of $I_2$ and $I_3$ in the collision problem: rather than
constraining phase shifts, they constrain the debris field directly,
requiring its $k = 1$ Fourier component to vanish.  The constraint
deficit therefore operates entirely on the radiation spectrum,
where finitely many constraints (two, from $I_2$ and $I_3$) act on
infinitely many debris modes.
\end{remark}

\paragraph{Constraint partition.}
Table~\ref{tab:constraint-partition} summarizes how the five
conservation laws partition between compacton parameters and debris
degrees of freedom.  The two amplitude-determining constraints
($I_0$, $I_1$) act on a two-dimensional space, generically forcing
amplitude preservation.  The two radiation constraints ($I_2$, $I_3$)
act on a single Fourier mode of the debris.  The nonlocal law $I_4$
provides a coupling constraint.  In total, five scalar conditions
govern an infinite-dimensional post-collision state.

\begin{table}[!ht]
\centering
\small
\setlength{\tabcolsep}{4pt}
\renewcommand{\arraystretch}{1.2}
\begin{tabular}{@{}llll@{}}
\toprule
Law & What it constrains & Target space & Degrees of freedom \\
\midrule
$I_0$ (mass) & $A'_1 + A'_2$ & Compacton amplitudes & $2$ \\
$I_1$ (energy) & $A'^{\,m+1}_1 + A'^{\,m+1}_2$ &
  Compacton amplitudes & (same $2$) \\
$I_2$ (trig-sin) & $\hat{w}(1)\sin$-component & Debris $k\!=\!1$ mode & $2$ \\
$I_3$ (trig-cos) & $\hat{w}(1)\cos$-component &
  Debris $k\!=\!1$ mode & (same $2$) \\
$I_4$ (nonlocal) & Compacton--debris coupling & Mixed & $\infty$ \\
\midrule
\multicolumn{2}{@{}l}{Unconstrained debris modes ($k \neq 1$)}
  & & $\infty$ \\
\bottomrule
\end{tabular}
\caption{Partition of conservation law constraints in a
two-compacton collision.  The five laws constrain the emergent
amplitudes and the fundamental Fourier mode of the debris, leaving
all higher radiation modes unconstrained.  In an integrable system,
every radiation mode would be constrained by a corresponding
conservation law.}
\label{tab:constraint-partition}
\end{table}

Part~(c) is the crux of the constraint deficit argument. In integrable
systems, the infinite conservation hierarchy constrains every
Fourier mode of the solution: the $n$-th conservation law, which
involves spatial derivatives of order~$n$, restricts the $n$-th
spectral component.  The IST translates this algebraic control into
perfect elasticity~\cite{ablowitz1981solitons}. K(m,m) has no such
mechanism.  The debris field acquires energy through nonlinear mode
coupling during the collision, and only two of its infinitely many
spectral components are constrained to vanish afterward.

\FloatBarrier
\paragraph{Numerical confirmation.}
Two concrete predictions emerge from the constraint analysis, and
both are confirmed by high-resolution spectral analysis of the
post-collision debris field (Figure~\ref{fig:debris-spectrum}).
First, the debris Fourier spectrum exhibits a notch at $k = 1$:
the fundamental mode is forced to vanish by conservation of $I_2$
and $I_3$, and the measured $|\hat{w}(1)|$ is suppressed by nearly
two orders of magnitude below the spectral peak for all tested
values of~$m$.  Second, the debris energy resides predominantly in
modes $k \geq 2$, with the spectral peak shifting to higher
wavenumbers as $m$ increases (from $k_{\mathrm{peak}} \approx 0.38$
for $m = 2$ to $k_{\mathrm{peak}} \approx 3.6$ for $m = 5$), consistent
with the narrowing compacton support exciting shorter-wavelength modes
during the collision.  Grid convergence tests from $N_x = 256$
through $N_x = 1024$ confirm that the notch deepens with refinement
while the debris spectrum stabilizes, establishing the notch as
a physical feature rather than a numerical artifact.
Table~\ref{tab:multi-m-results} summarizes the quantitative results.

\begin{figure}[!htbp]
\centering
\includegraphics[width=\textwidth]{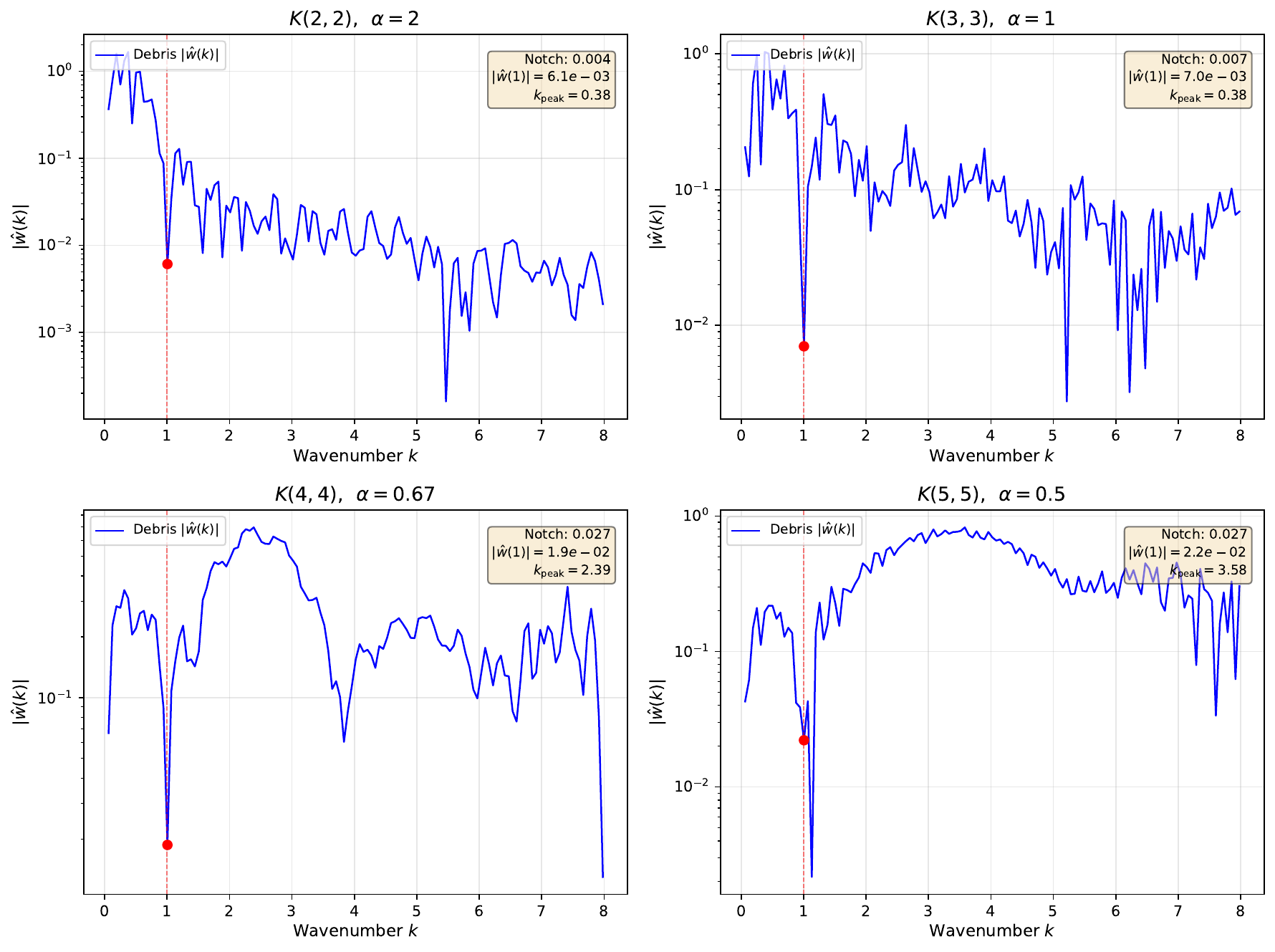}
\caption{Below-cutoff Fourier spectra of post-collision debris fields
for K($m$,$m$) with $m = 2, 3, 4, 5$ and amplitude ratio $A_2/A_1 = 0.5$.
Each panel shows the debris spectrum $|\hat{w}(k)|$ (solid) restricted to
wavenumbers below the low-pass filter cutoff
$\kappa_{\mathrm{cut}} = 2\pi\lceil N_x/(2q)\rceil/L$, above which the
filter removes the advection term
(Appendix~\ref{app:collision-detail}); the dashed vertical line marks
$k = 1$.
A spectral notch near $k = 1$ is visible in all four cases, confirming
the prediction of Theorem~\ref{thm:debris-bound}(c) across the family.
Notch ratios $|\hat{w}(1)|/\max|\hat{w}|$:
0.004 ($m\!=\!2$), 0.007 ($m\!=\!3$), 0.027 ($m\!=\!4$), 0.027 ($m\!=\!5$).
The spectral peak shifts to higher wavenumbers with increasing~$m$,
consistent with the narrowing compacton support exciting
shorter-wavelength debris modes.
Numerical parameters: pseudospectral method (PSM) with $N_x = 512$ grid points on a periodic domain of length $L = 100$,
exponential low-pass (LP) filter with cutoff parameter $q = 2$, biharmonic ramp dissipation with $\delta = 0.15$.
For $m \geq 3$ the debris signal-to-noise ratio exceeds
$15{:}1$, providing unambiguous debris detection
(see Appendix~\ref{app:numerical-method} and~\cite{hyman2026filters}).}
\label{fig:debris-spectrum}
\end{figure}

\begin{table}[!ht]
\centering
\renewcommand{\arraystretch}{1.15}
\begin{tabular}{@{}cccccc@{}}
\toprule
$m$ & $\alpha$ & Notch ratio & $|\hat{w}(1)|$ & $k_{\mathrm{peak}}$
  & $E_{\mathrm{debris}}/E_{\mathrm{tot}}$ \\
\midrule
$2$ & $2$    & $0.004$ & $6.1 \times 10^{-3}$ & $0.38$ & $3.7 \times 10^{-5}$ \\
$3$ & $1$    & $0.007$ & $7.0 \times 10^{-3}$ & $0.38$ & $5.2 \times 10^{-7}$ \\
$4$ & $2/3$  & $0.027$ & $1.9 \times 10^{-2}$ & $2.39$ & $7.4 \times 10^{-7}$ \\
$5$ & $1/2$  & $0.027$ & $2.2 \times 10^{-2}$ & $3.58$ & $1.5 \times 10^{-6}$ \\
\bottomrule
\end{tabular}
\caption{Quantitative debris spectral analysis for K($m$,$m$) collisions
with $A_1 = 1$, $A_2 = 0.5$, $N_x = 512$ (PSM).  The notch ratio
$|\hat{w}(1)|/\max|\hat{w}|$ measures the depth of the spectral
suppression at $k = 1$ predicted by Theorem~\ref{thm:debris-bound}(c);
values below $0.03$ for all cases confirm significant suppression.
The spectral peak wavenumber $k_{\mathrm{peak}}$ increases with~$m$,
reflecting shorter-wavelength debris from narrower compacton
profiles.  The debris energy fraction 
$E_{\mathrm{debris}}/E_{\mathrm{tot}}$ remains below $4 \times 10^{-5}$
for all~$m$, confirming that the post-collision radiation is a
minor perturbation.}
\label{tab:multi-m-results}
\end{table}

The multi-parameter validation of Figure~\ref{fig:debris-spectrum}
confirms that the $k = 1$ notch is a universal feature of K($m$,$m$)
collisions: it persists across all tested values of~$m$, deepening
with grid refinement.  For $m \ge 3$ the debris energy fraction grows with~$m$
(the $m = 2$ case, with its $C^1$ boundary, sits well over an order
of magnitude above), while the notch depth remains consistently deep, and the spectral
peak shifts to higher wavenumbers as compacton support narrows.

\clearpage
\subsubsection*{What the conservation laws do constrain}

While the five laws cannot enforce perfect elasticity, they impose
strong constraints on what the debris field looks like.

\begin{theorem}[Conservation Law Constraints on Debris]\label{thm:debris-bound}
For a two-compacton collision in K(m,m) with pre-collision amplitudes
$A_1, A_2$ and post-collision amplitudes $A'_1, A'_2$, and with the
post-collision compactons and the debris field $w$ of disjoint
support (so that the nonlinear energy is additive),
the debris field $w$ satisfies:
\begin{enumerate}
\item[\emph{(a)}] \textbf{Debris energy.}
$E_w = D_m[(A_1^{m+1} + A_2^{m+1}) - (A'^{\,m+1}_1 + A'^{\,m+1}_2)]$,
where $D_m$ is the energy integral coefficient.
\item[\emph{(b)}] \textbf{Debris mass.}
$M_w = C_m[(A_1 + A_2) - (A'_1 + A'_2)]$,
where $C_m$ is the mass integral coefficient.
\item[\emph{(c)}] \textbf{Fourier mode constraint.}
The Fourier transform of the debris at wavenumber $k = 1$ vanishes:
$\hat{w}(1) = \hat{w}(-1) = 0$.
All other wavenumbers are unconstrained by conservation laws.
\end{enumerate}
\end{theorem}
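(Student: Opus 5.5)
The plan is to read each part off one conservation law, evaluated once before the collision and once after it, using the structure already established. Before the collision the two compactons are well separated and there is no debris. After the collision, by hypothesis, the state is $u = U_{A'_1}(\cdot - x_1') + U_{A'_2}(\cdot - x_2') + w$, where the three pieces have pairwise disjoint supports. Every step needs the integrals of compacton profiles, so I would first record them. Put $\theta = \xi/\ell$ in~\eqref{eq:compacton-ansatz}. This gives
\[
\int U\,d\xi = A\,\ell\int_{-\pi/2}^{\pi/2}\cos^{\alpha}\theta\,d\theta = C_m A,
\qquad
\int \frac{U^{m+1}}{m+1}\,d\xi = \frac{\ell A^{m+1}}{m+1}\int_{-\pi/2}^{\pi/2}\cos^{(m+1)\alpha}\theta\,d\theta = D_m A^{m+1},
\]
with $C_m$ and $D_m$ Beta-function constants that depend only on $m$. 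Both integrals are translation invariant, so the phase shifts $\delta_j$ drop out. The weak-solution remark of \S\ref{subsec:regularity} (Appendix~\ref{app:weak-solutions}) lets us use conservation of $I_0$, $I_1$, $I_2$, $I_3$ even for $m>3$, where the profiles are only H\"older continuous.

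For (b), mass is linear, so no disjointness is needed. Conservation of $I_0$ gives $C_m(A_1+A_2) = C_m(A'_1+A'_2) + \int w$, and solving for $M_w=\int w$ gives the formula. For (a), $I_1$ is nonlinear, and here disjoint supports are essential. They give $(u)^{m+1} = \sum_j U_j^{m+1} + w^{m+1}$ pointwise, so $I_1$ splits additively. Conservation of $I_1$ then yields $E_w := \int w^{m+1}/(m+1)\,dx = D_m[(A_1^{m+1}+A_2^{m+1}) - (A'^{\,m+1}_1+A'^{\,m+1}_2)]$. For non-integer $m$ I would use the $|u|^{m-1}u$ convention, since $w$ may change sign; only additivity over disjoint supports is used. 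One caveat should be stated explicitly: before the collision the supports are also disjoint, so the pre-collision energy splits the same way.

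For (c), I would use $I_3 - iI_2 = \hat u(1,t)$ and linearity. By Proposition~\ref{prop:trig-vanishing}, every translated compacton has $\int U(x-x_0)e^{-ix}\,dx = 0$. So $\hat u(1) = 0$ before the collision, and after it $\hat u(1) = \hat w(1)$. Conservation therefore forces $\hat w(1)=0$. Because $w$ is real, $\hat w(-1) = \overline{\hat w(1)} = 0$ as well. This part needs neither disjointness nor any amplitude information.

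The second half of (c), that no other wavenumber is constrained, I would not try to prove as an absolute statement. Instead I would argue relative to the known conservation laws: $I_0$ fixes only $\hat w(0) = M_w$, which is already recorded in (b); $I_1$ is a nonlinear scalar; and $I_4$ is a single scalar coupling the compactons and debris, not a condition pinning any individual mode $\hat w(k)$. Combined with (H3), this gives the claim in the sense of Theorem~\ref{thm:constraint-deficit}(c).

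The main obstacle is deciding exactly what this negative claim asserts. I expect to phrase it as: for each $k_0 \neq \pm1$ there are admissible debris fields with $\hat w(k_0)\neq0$ that are consistent with all five laws. The proof would add a small compactly supported bump whose Fourier transform is nonzero at $k_0$ but vanishes at $k=\pm1$, then adjust amplitudes using (a) and (b) and the implicit function theorem. The $I_4$ condition is handled by a one-parameter rescaling of the bump.
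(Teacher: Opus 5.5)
Your proofs of (a), (b) and of $\hat w(\pm1)=0$ are the paper's. You evaluate $I_0$ and $I_1$ before and after the collision with $M[U]=C_mA$ and $E[U]=D_mA^{m+1}$. You use Proposition~\ref{prop:trig-vanishing} to remove the compacton contributions to $I_2$ and $I_3$. The paper combines $\int w\sin x\,dx=0$ and $\int w\cos x\,dx=0$ into $\hat w(1)\mp\hat w(-1)=0$; your use of $\hat u(1)=I_3-iI_2$ plus reality of $w$ is equivalent. For the clause that other wavenumbers are unconstrained, the paper gives only the assertion, so your justification relative to the five known laws is already at the paper's level.

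The constructive sharpening in your last paragraph goes beyond the paper, and its final step fails as written. Under your disjointness hypothesis $I_4$ is also additive. This is because $(D_x^2+1)^{-1}U_j=U_j^m/c_j$ and the multiplier $1/(1-k^2)$ is symmetric on functions vanishing at $k=\pm1$, so all compacton--compacton and compacton--debris cross terms vanish. With $w=\epsilon\phi$, the three non-automatic laws read
\begin{gather*}
C_m(A_1'+A_2')+\epsilon\int\phi\,dx=C_m(A_1+A_2),\\
D_m\bigl((A_1')^{m+1}+(A_2')^{m+1}\bigr)+\frac{\epsilon^{m+1}}{m+1}\int\phi^{m+1}\,dx=D_m\bigl(A_1^{m+1}+A_2^{m+1}\bigr),\\
\beta_m\bigl((A_1')^{2}+(A_2')^{2}\bigr)+\epsilon^{2}I_4[\phi]=\beta_m\bigl(A_1^{2}+A_2^{2}\bigr),
\end{gather*}
with $\beta_m=I_4[U]/A^2=2mD_m>0$. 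If the scale $\epsilon$ is what you adjust to meet $I_4$, this is a square system in $(A_1',A_2',\epsilon)$. Its Jacobian determinant at the trivial solution $(A_1,A_2,0)$ is $2(m+1)\beta_mD_m\,A_1A_2\bigl(A_1^{m-1}-A_2^{m-1}\bigr)\int\phi\,dx$, which is nonzero when $\int\phi\neq0$. The inverse function theorem then makes the trivial solution isolated, so the rescaling forces $\epsilon=0$ and no debris at all.

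If instead $\int\phi=0$, the second equation forces $A_j'-A_j=O(\epsilon^{m+1})$. The third forces it to be of exact order $\epsilon^2$ whenever $I_4[\phi]\neq0$, which is incompatible for small $\epsilon\neq0$ since $m>1$. Eliminating $A_1',A_2'$, the three laws leave one scalar relation among $M_w$, $E_w$ and $I_4[w]$. You need a parameter other than the size of the mode-carrying bump to absorb it.

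The fix is to keep $\epsilon$ as an external parameter and add a second bump $s\psi$, away from the compactons, with $\hat\psi(\pm1)=0$ and $\int\psi\neq0$; for example $\psi=(D_x^2+1)\chi$ with $\int\chi\neq0$. Then solve for $(A_1',A_2',s)$. The Jacobian determinant is the same expression with $\int\psi$ in place of $\int\phi$, hence nonzero. Choosing $\phi=\mathfrak{D}\chi$ with $\chi\in C_c^\infty$, so that $\hat\phi(0)=\hat\phi(\pm1)=0$ and $\hat\phi(k_0)\neq0$, gives $s$ and $A_j'-A_j$ of order $\epsilon^2$. Hence $\hat w(k_0)=\epsilon\hat\phi(k_0)+O(\epsilon^2)\neq0$ for every $k_0\neq0,\pm1$.
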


\begin{proof}
Part~(a) follows from energy conservation $E_{\mathrm{tot}}
= E'_1 + E'_2 + E_w$ with $E[U] = D_m A^{m+1}$. Part~(b) is
analogous using $M[U] = C_m A$. For~(c), conservation of $I_2$
requires $\int w \sin(x)\,dx = I_2^{\mathrm{pre}}
 - I_2[\text{comp.}\ 1'] - I_2[\text{comp.}\ 2']$.
By Proposition~\ref{prop:trig-vanishing}, the compacton contributions
vanish, and $I_2^{\mathrm{pre}} = 0$ as well (since the pre-collision
state consists of compactons only).  Hence $\int w\sin(x)\,dx = 0$.
Since $\int w(x)\sin(x)\,dx = -\tfrac{1}{2i}[\hat{w}(1) -
\hat{w}(-1)]$ (where $\hat{w}$ is the Fourier transform on
$\mathbb{R}$), and symmetrically $\int w\cos(x)\,dx = 0$ from $I_3$
conservation gives $\hat{w}(1) + \hat{w}(-1) = 0$, together these
yield $\hat{w}(\pm 1) = 0$.  The conservation laws impose no
constraint on $\hat{w}(k)$ for $k \neq \pm 1$.
\end{proof}

Theorems~\ref{thm:constraint-deficit}
and~\ref{thm:debris-bound} explain the observed collision
phenomenology from complementary perspectives.
Theorem~\ref{thm:constraint-deficit} shows \emph{why} debris
exists: the conservation laws constrain only the fundamental
Fourier mode of the radiation, leaving all higher modes free.
Theorem~\ref{thm:debris-bound} shows \emph{what} the conservation
laws determine: the debris energy, mass, and the vanishing of its
dominant spatial frequency. The fifth law (nonlocal momentum)
provides an additional coupling constraint.
What remains undetermined (the higher Fourier modes and detailed
spatial profile of the debris) is governed by the full nonlinear
dynamics, not by conservation considerations alone.

\paragraph{Computational methods.}
All collision simulations use the spectrally-filtered pseudospectral
method described in Appendix~\ref{app:numerical-method}: $N_x = 512$
grid points, joint LP filtering (Variant~C, $q = 2$), biharmonic ramp
dissipation with $\delta = 0.15$, and fixed-step RK4 integration.
Post-collision amplitudes are extracted by Nelder--Mead
least-squares fitting of compacton profiles.  Spectral analysis
is restricted to wavenumbers below the filter cutoff; see
Appendix~\ref{app:collision-detail} for additional detail on
debris extraction and signal-to-noise methodology.

\FloatBarrier
\subsection{The conservation-elasticity landscape}
\label{subsec:landscape}

Contrasting K(m,m) with integrable systems sharpens the collision results
of the preceding subsection.

\begin{proposition}[Integrable Endpoint]\label{prop:integrable-endpoint}
For completely integrable dispersive equations with an infinite
hierarchy of independent conservation laws, multi-soliton collisions
are perfectly elastic: $A'_i = A_i$ and $E_w = 0$.
\end{proposition}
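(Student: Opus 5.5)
The plan is to deduce the statement from the inverse scattering transform, working with the Lax pair of \S\ref{subsec:lax-pairs} as the model case and treating other integrable equations by the same template. ``Completely integrable'' will be read operationally: the equation is the compatibility condition $L_t=[M,L]$ of a Lax pair whose spectral problem has a well-posed direct and inverse scattering theory on the class of decaying initial data. The infinitude of the conservation hierarchy then follows from the isospectral property, as recalled in \S\ref{subsec:integrability-features}. Building this identification into the hypothesis is necessary. An infinite list of conserved functionals alone does not obviously force elasticity, and the heuristic ``one law per Fourier mode'' correspondence used in Theorem~\ref{thm:constraint-deficit}(c) is explicitly not a proved statement.

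The first step is the direct transform. For decaying data $u(\cdot,0)$, the operator $L$ has finitely many discrete eigenvalues $\lambda_j=-\kappa_j^2$, with norming constants $c_j$, and a reflection coefficient $r(k)$ on the continuous spectrum. Isospectrality gives $\kappa_j(t)=\kappa_j(0)$. The $M$-equation gives linear evolution of the remaining data, $c_j(t)=c_j(0)e^{8\kappa_j^3t}$ and $r(k,t)=r(k,0)e^{8ik^3t}$, in the KdV normalization; the exponents change with the equation but the structure does not.

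The second step is to apply this to a pre-collision state of well-separated solitons with no radiation. Such a state is, up to exponentially small errors in the separations, the exact reflectionless $N$-soliton solution, so $r\equiv0$. Because $r(k,t)$ evolves by a unimodular factor, it stays identically zero for all time. The inverse transform (Gel'fand--Levitan--Marchenko) therefore reconstructs an exact $N$-soliton solution at every $t$. Its $t\to\pm\infty$ asymptotics, obtained from the Hirota or Marchenko formula, are sums of single solitons with parameters $\kappa_j$. By \eqref{eq:kdv-soliton} these carry amplitudes $A_j=6\kappa_j^2$ on both sides of the interaction, so $A'_i=A_i$, shifted only in phase. The radiation part of the inverse transform is generated by $r$, so the debris vanishes, $w\equiv0$, and in particular $E_w=0$. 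Equivalently, $E_w=0$ follows from energy conservation together with $A'_i=A_i$.

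The main obstacle is rigor in the second step. Initial data made of separated solitons is only approximately reflectionless, so a careful statement must either take the exact multi-soliton as the pre-collision state, or show that the small $r$ yields radiation whose energy vanishes in the limit of infinite initial separation. The second route is a dispersive decay estimate on the radiative component, obtainable for example by nonlinear steepest descent. I would take the first route and define the collision directly by the exact $N$-soliton solution, citing \cite{ablowitz1981solitons} for its asymptotics. This makes the proposition essentially a restatement of the zero-reflection invariance under the IST.
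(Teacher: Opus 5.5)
Your proposal is correct and follows the paper's own argument. The paper justifies the proposition in one sentence: it follows from the inverse scattering transform, because discrete spectral data are preserved under the isospectral flow and a purely discrete spectrum excludes radiation. That is exactly your constancy of the $\kappa_j$ together with the persistence of $r\equiv 0$.

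Your version adds precision the paper omits, namely reading ``infinite hierarchy'' as IST-solvability and defining the collision by the exact reflectionless $N$-soliton. The one thing to tighten is that $w\equiv 0$ should be read asymptotically. At finite times the exact $N$-soliton differs from a sum of separated solitons by exponentially small interaction terms, which vanish only as $t\to\pm\infty$.
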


This classical result~\cite{gardner1967method,ablowitz1981solitons,lax1968integrals}
follows from the inverse scattering transform: discrete spectral data
is preserved under isospectral flow, and pure discrete spectrum
excludes radiation.

A second reference point comes from the Benjamin--Bona--Mahony
(BBM) equation $u_t + u_x + uu_x - u_{xxt} = 0$, which possesses
exactly three conservation
laws~\cite{benjamin1972model,olver1979euler} and fails every
test of integrability.  Bona, Pritchard, and
Scott~\cite{bona1980solitary} demonstrated numerically that BBM
solitary wave collisions produce permanent amplitude changes and
a trailing dispersive wavetrain.  Martel, Merle, and
Mizumachi~\cite{martel2010description} subsequently proved this
inelasticity rigorously: for nearly equal BBM solitary waves,
the collision generates a dispersive tail whose $H^1$ norm is
bounded below by a positive quantity depending on the amplitude
ratio.  The Martel--Merle--Mizumachi result establishes that
BBM collisions are not perfectly elastic, but it does not
specify how the elasticity compares quantitatively with other
non-integrable systems.

\paragraph{Why compare BBM and K(m,m)?}
The comparison of structurally dissimilar equations is
essential for testing whether conservation law count governs
collision elasticity as a general principle, or only within
particular equation classes.  BBM solitary waves have smooth,
exponentially decaying tails ($\operatorname{sech}^2$ profiles),
while K(m,m) compactons have sharp boundaries with the solution
vanishing identically beyond finite support.  If the
conservation-elasticity relationship were controlled by~$N$
alone, the collision character should depend primarily on the
number of conservation laws regardless of interaction geometry.
If, on the other hand, tail behavior or support structure
modulates the relationship, comparing equations with different
interaction geometries would reveal this additional dependence.
We therefore computed BBM collision elasticity using the same
amplitude-based measure~\eqref{eq:elasticity} applied to K(m,m)
in~\S\ref{subsec:quasi-elastic}.

\paragraph{BBM collision simulations.}
We solve the BBM equation using a pseudospectral method
($N_x = 4096$ grid points, adaptive Runge--Kutta (RK45) time
stepping) and measure collision
elasticity via~\eqref{eq:elasticity} across a range of amplitudes
and amplitude ratios.
Full simulation parameters, validation details, and individual
collision measurements appear in
Appendix~\ref{app:bbm-detail}.

\paragraph{Result.}
Across all tested parameter combinations, the BBM collision
elasticity satisfies
\begin{equation}\label{eq:bbm-elasticity}
\mathcal{E}_{\mathrm{BBM}} \approx 0.9993\text{--}1.0001,
\end{equation}
with debris energy fractions of order $10^{-4}$ to~$10^{-5}$.
This is comparable to the K(m,m) compacton
collisions measured in~\S\ref{subsec:quasi-elastic}, where
$\mathcal{E}_{\mathrm{K}(m,m)} \approx 0.999$--$1.000$
despite K(m,m) possessing five conservation laws to BBM's three.
By debris energy the K(m,m) collisions are the cleaner: at
$A_1 = 1$, $A_2 = 0.5$ the BBM debris fraction is
$1.4 \times 10^{-4}$ (Supplement, Table~S3), larger than every
K(m,m) value in Table~\ref{tab:multi-m-results}.

The result is consistent with the rigorous analysis of
Martel, Merle, and Mizumachi, who proved the existence of a
nonzero dispersive residual without estimating its magnitude.
The near-elasticity of BBM collisions reflects the perturbative
nature of interactions between smooth, exponentially localized
waves: the overlap region evolves slowly, transferring very
little energy to radiation.  K(m,m) compactons interact through abrupt contact at their support
boundaries, yet they shed less debris energy, consistent with their
larger conservation law count.

This single comparison does not separate the effects of
conservation law count and interaction geometry: by the amplitude
measure the two equations are indistinguishable at the resolution of
these runs, and by debris energy the K(m,m) collisions are the
cleaner.  Other structural differences, such as BBM's
uni-directional phase space, which limits scattering channels, may
also contribute.  The landscape relating conservation structure to
collision outcomes likely depends on several factors.

Taken together, Theorem~\ref{thm:constraint-deficit},
Theorem~\ref{thm:debris-bound},
Proposition~\ref{prop:integrable-endpoint}, and the BBM
comparison place compacton collisions within a broader landscape
relating conservation structure and interaction geometry to
collision outcomes.

The conservation-elasticity landscape may be richer than a simple
monotone relationship between conservation count and elasticity.
BBM solitary waves have smooth exponential tails that mediate
gradual, perturbative interactions, while K(m,m) compactons interact
through abrupt contact at their support boundaries, so the two
families differ in interaction geometry as well as in conservation
law count.  The BBM equation at $N = 3$ collides about as
elastically as K(m,m) at $N = 5$ by the amplitude measure, while
shedding more debris energy, which leaves open whether geometry
modulates the role of the count.  This motivates a refined
conjecture.

\begin{conjecture}[Within-Class Conservation-Elasticity Correlation]\label{conj:monotonicity}
For nonlinear dispersive equations sharing a common interaction
geometry (solitary wave tail behavior), the collision elasticity
$\mathcal{E}$ at fixed collision parameters is expected to
correlate strongly with the number $N$ of independent conservation
laws, with $\mathcal{E} \to 1$ as $N \to \infty$.  Across geometric
classes, the interaction geometry introduces an independent
modulation: equations with smoother, more gradually decaying
solitary wave profiles are expected to produce more elastic
collisions at equal conservation law count.
\end{conjecture}

The within-class component of
Conjecture~\ref{conj:monotonicity} is anchored by two proved
endpoints: $N = \infty$ gives $\mathcal{E} = 1$
(Proposition~\ref{prop:integrable-endpoint}), and the constraint
deficit of Theorem~\ref{thm:constraint-deficit} explains why
finite~$N$ permits $\mathcal{E} < 1$.  Within the K(m,m)
family (where all members share the same compact-support
geometry), the five conservation laws produce quasi-elastic
collisions with $\mathcal{E}$ within $2 \times 10^{-3}$ of~$1$ and
debris energy fractions below $4 \times 10^{-5}$, close to but not
at the integrable endpoint.

The cross-class component remains open.  The BBM comparison is not
made at equal conservation law count, and it does not isolate
geometry: BBM, with three laws, collides about as elastically as
K(m,m) by the amplitude measure and sheds more debris energy.  A
test of the cross-class component needs equation families that share
a conservation law count but differ in tail behavior.

\paragraph{Limitations of the evidence.}
The within-class monotonicity component rests on one equation
family (K(m,m)) and the integrable endpoints; testing with
additional families would strengthen the claim.
The cross-class component has so far been probed by a single pair
(BBM versus K(m,m)), which does not resolve it.  Within K(m,m) itself, the case
$m = 3$ produces somewhat more elastic collisions than $m = 2$
despite the same conservation law count, suggesting that the
specific algebraic forms of the laws influence elasticity
beyond the raw count.  An additional subtlety, noted by
Garral\'on-L\'opez et al.~\cite{garralon2023compacton}, is an
odd-even asymmetry in collision robustness across the family.
Whether interaction geometry acts as a second independent factor,
and if so how it depends on geometric parameters, awaits further
investigation (Problem~4 in Appendix~\ref{app:open-problems};
extended discussion in Appendix~\ref{app:bbm-detail}).

\subsection{Broader implications: constraint deficit as a general framework}
\label{subsec:broader-implications}

The constraint-deficit mechanism, developed above for K(m,m)
compactons, rests on three ingredients that are not specific to any
single equation: a finite set of conservation laws, solitary wave
solutions parameterized by amplitude and speed, and collisions that
produce localized emergent structures plus a radiation field.  We
conjecture that any dispersive PDE possessing these ingredients admits
a constraint-deficit analysis, and that the framework may provide a quantitative lens on the
intermediate territory between integrable order and generic nonlinear
dynamics.

To make this precise, consider a dispersive PDE with $N$ independent
conservation laws and a family of solitary waves parameterized by a
single continuous variable (typically amplitude).  After a binary
collision, the post-collision state involves two emergent solitary
waves (parameterized by $2$ amplitudes and $2$ phase shifts) and a
radiation field with infinitely many degrees of freedom.  The $N$
conservation laws impose $N_c = N$ scalar constraints on the
post-collision state.  If $N_c \geq 4$
(two amplitude constraints plus at least two radiation constraints),
the emergent amplitudes are generically determined and at least some
radiation modes are suppressed.  The remaining radiation modes,
$\infty - (N_c - 2)$ in a formal count, are unconstrained.  As
$N_c \to \infty$, all radiation modes become constrained, recovering
perfect elasticity: the integrable limit.

This counting argument, which underlies
Theorem~\ref{thm:constraint-deficit}, suggests an analogous
analysis for any system satisfying the three candidate
prerequisites identified from the K(m,m) analysis.  One can then
define a quantitative measure, the \emph{conservation elasticity},
capturing the degree to which a finite conservation hierarchy
approximates the integrable ideal:

\begin{definition}[Conservation Elasticity]\label{def:conservation-elasticity}
For a dispersive PDE with $N$ independent conservation laws and
solitary wave collisions producing debris with energy $E_w$, the
\emph{conservation elasticity} is
\[
\eta(N) \;=\; 1 - \frac{E_w}{E_{\mathrm{tot}}},
\]
where $E_{\mathrm{tot}}$ is the total energy.  The integrable limit
corresponds to $\eta = 1$; generic Hamiltonian systems with $N = 2$
yield $\eta \ll 1$.
\end{definition}

\noindent For K(m,m) compactons, $\eta$ coincides with the
energy-based elasticity $\mathcal{E}_E$ of
Appendix~\ref{app:collision-detail}.  Theorem~\ref{thm:universal}
ensures that $\eta$ and the amplitude-based
elasticity~$\mathcal{E}$ are monotonically related, so the
within-family ordering is independent of the choice of measure.

The K(m,m) family provides the first systematic test:
$\eta$ ranges from $0.99996$ (at $m = 2$) to $0.9999995$ (at
$m = 3$) with $N = 5$ (Table~\ref{tab:multi-m-results}), while the
BBM comparison gives $\eta \approx 0.9994$--$0.99999$ at $N = 3$.  Extending this analysis to
other families with known conservation counts would begin to map the
$\eta$-vs-$N$ relationship.

The broader program of testing the constraint-deficit framework
across equation families remains conjectural; we return to this
question and its implications in the Concluding
Remarks~(\S\ref{sec:conclusion}).

\section{Compacton emergence from localized initial data}
\label{sec:emergence}

The collision experiments of the preceding section demonstrate that
pre-formed compactons survive interaction almost elastically.  A
complementary question probes the robustness of compactons from the
opposite direction: given a localized initial pulse that is
\emph{not} a compacton, does the K$(m,m)$ evolution decompose it
into compactons?  For the integrable KdV equation, the inverse
scattering transform provides an exact answer: an arbitrary
sufficiently smooth localized initial pulse decomposes into a finite
number of solitons whose amplitudes are determined by the
Schr\"odinger eigenvalues of the associated spectral
problem~\cite{lax1968integrals,miura1968korteweg}.
No analogous theory exists for the non-integrable
K$(m,n)$ family.  Numerical experiments reveal, however, that
compacton emergence follows strikingly regular patterns.

\subsection{Wide-compacton initial data}

To avoid Gibbs oscillations associated with discontinuous
initial data, we initialize the computation with a
\emph{wide compacton}, the K$(m,m)$ compacton shape stretched
by a width factor~$w$:
\[
u_0(x) =
\begin{cases}
A\cos^{\alpha}\!\Bigl(\dfrac{x - x_0}{w\ell}\Bigr),
  & |x - x_0| \leq w\pi\ell/2,\\[6pt]
0, & \text{otherwise},
\end{cases}
\]
with $\alpha$ and $\ell$ from Theorem~\ref{thm:universal}, so that
the half-width is $w$ times the natural half-width
$\pi\ell/2 = m\pi/(m-1)$.  We take $w = 2.5$ and $A = 1$ in all
cases.  This continuous, compactly supported pulse contains sufficient
excess width to decompose into multiple compactons while remaining
within the class of functions for which the numerical method
performs reliably.

\subsection{Emergence dynamics}
\label{subsec:emergence-dynamics}

Figure~\ref{fig:emergence} presents the evolution of a wide
initial pulse under the K$(m,m)$ equation for $m = 2, 3, 4, 5$.
In every case, the initial pulse rapidly steepens at the leading
edge, sheds compactons from the front, and leaves a dispersive
remnant at the rear.  By $t \approx 50$, three or more distinct
compactons have separated, propagating rightward at speeds governed
by the universal formula $c = \tfrac{m+1}{2m}\,A^{m-1}$ from
Theorem~\ref{thm:universal}.  The tallest (fastest)
compacton leads; progressively smaller compactons trail behind.
For $m = 2$, four compactons separate cleanly; for K$(3,3)$ a
marginal fourth appears at the noise floor (a peak amplitude of
$0.08$, below which peaks are not counted); for $m \geq 4$,
three are unambiguous while the trailing dynamics fall at or below
the numerical noise floor (see below).

\begin{figure}[!ht]
\centering
\includegraphics[width=\textwidth]{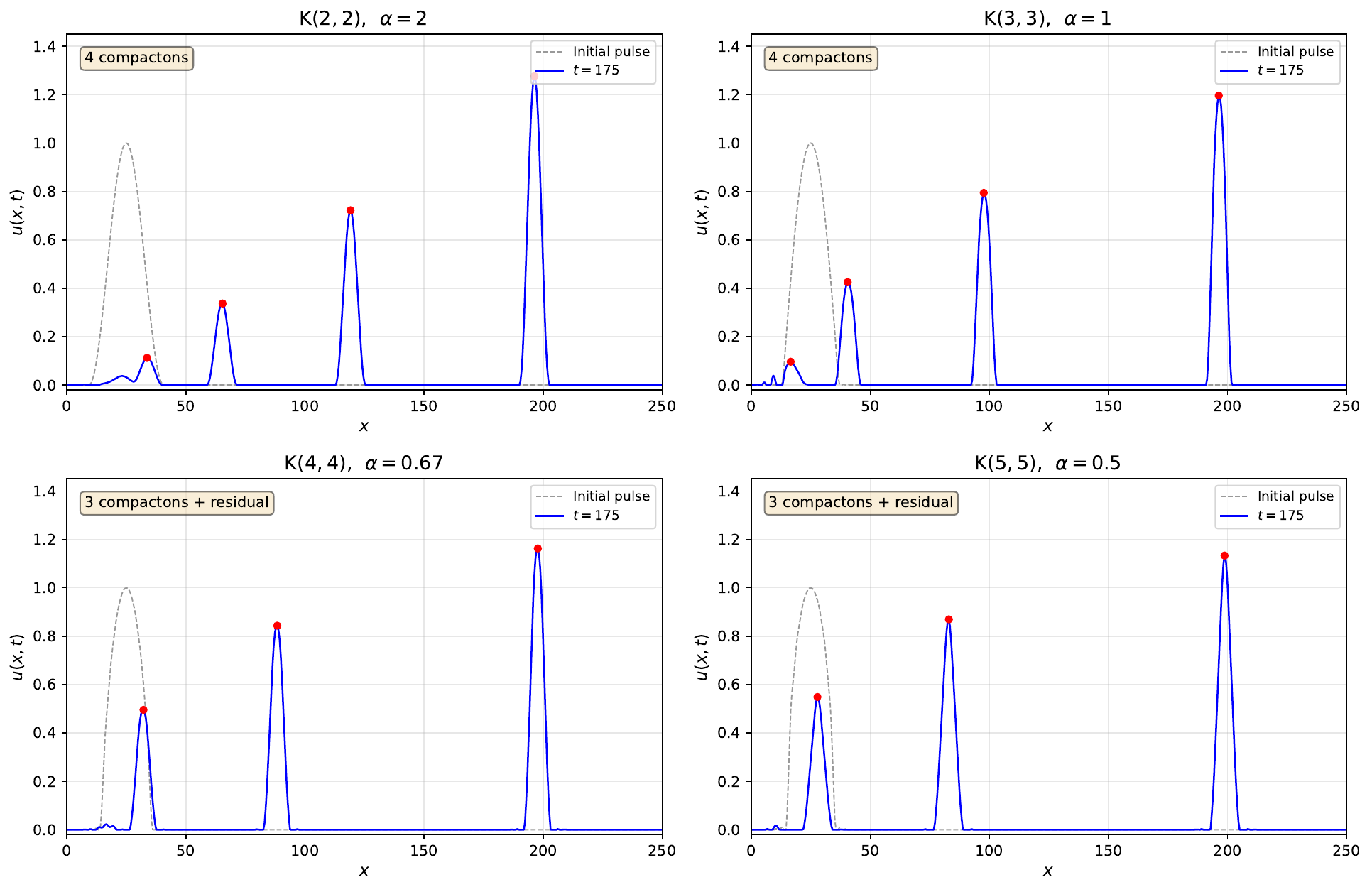}
\caption{Compacton emergence from a wide initial pulse
($w = 2.5$ times the natural half-width, amplitude $A = 1$)
for K$(m,m)$ with $m = 2, 3, 4, 5$.  Each panel shows the
initial profile (dashed grey) and the state at $t = 175$ (solid blue);
red markers indicate detected compacton peaks above the noise floor.
The boundary regularity exponent $\alpha = 2/(m-1)$ controls both
the Gibbs amplitude at the support boundary and the resolvability
of trailing compactons.  PSM with $N_x = 512$, $q = 2$;
graded dissipation $\delta \in [0.15, 0.30]$.}
\label{fig:emergence}
\end{figure}

Table~\ref{tab:emergence} reports the amplitudes and speeds of
the emerged compactons for each K$(m,m)$ equation and
for the mixed-exponent cases K$(2,3)$ and K$(3,2)$.  Three patterns
are evident.

First, the \emph{leading compacton speeds are remarkably similar
across the K$(m,m)$ family}, clustering between $0.95$ and $0.99$
despite different values of~$m$.  This is an empirical property of the emerged
leading amplitudes $A_1(m) = 1.28, 1.20, 1.16, 1.13$, which offset
the falling coefficient $(m+1)/(2m)$ through the factor $A_1^{m-1}$.

Second, \emph{trailing compactons separate at rates that depend
sensitively on~$m$}.  For $m = 2$, the linear speed--amplitude
relation $c \propto A$ produces moderate speed differences between
large and small compactons.  For $m \geq 3$, the power-law
relation $c \propto A^{m-1}$ causes the smallest resolved
compacton to be nearly stationary ($c_3 \leq 0.12$), creating a
large speed spread between the leading and trailing structures.
This $m$-dependent separation rate has no counterpart in the
integrable case, where the speed is always linear in amplitude.

Third, the \emph{emerged amplitudes are constrained by the
conservation laws}.  The four local conserved quantities
(Theorem~\ref{thm:finite-hierarchy}), together with $I_4$ where the
initial pulse satisfies $\hat u(\pm 1) = 0$ ($m = 2, 4$ here), relate
the initial pulse parameters to the emergent compacton amplitudes.
While these constraints cannot determine the amplitudes and
positions of $N > 1$ compactons together with the dispersive
remnant, they do
restrict the set of admissible configurations.

\begin{table}[!ht]
\centering
\renewcommand{\arraystretch}{1.15}
\caption{Compacton emergence from a wide initial pulse of unit
amplitude.  Amplitudes $A_i$ measured from peak detection at
$t = 175$; speeds $c_i$ computed from unrounded amplitudes with the
exact traveling-wave relation $c = [(n+1)/(m+n)]\,A^{m-1}$, which reduces to
Theorem~\ref{thm:universal} for K$(m,m)$ (validated against direct
propagation-speed measurements; see implications discussion).
For K$(2,2)$, a fourth compacton ($A_4 = 0.11$) is cleanly resolved;
for K$(3,3)$ a marginal fourth ($\dagger$) sits at the $0.08$ noise
floor; for $m \geq 4$ additional trailing structures fall below it
(see text).  For K$(4,4)$ and K$(5,5)$ the implied $\sum_i A_i^{m+1}$
exceeds the initial pulse energy $2.5$ by $2\%$, comparable to the
$\pm 2$--$3\%$ spread that two-decimal rounding of the amplitudes
alone induces; since grid sampling and dissipation both lower the
measured sum, any real excess would indicate that the emerged
structures at $t = 175$ are not yet pure compactons of the listed
amplitudes (overlapping remnant), which a profile refit would
settle.
Upper block: K$(m,m)$ with $m \in \{2,3,4,5\}$; lower block:
mixed-exponent cases K$(2,3)$ and K$(3,2)$.
PSM with $N_x = 512$, $q = 2$; $N$ is the number of resolved
compactons.}
\label{tab:emergence}
\begin{tabular}{@{}l cccc c cccc c@{}}
\toprule
 & \multicolumn{4}{c}{Amplitude $A_i$} & & \multicolumn{4}{c}{Speed $c_i$} & Spread \\
\cmidrule{2-5} \cmidrule{7-10}
Equation & $A_1$ & $A_2$ & $A_3$ & $A_4$ & & $c_1$ & $c_2$ & $c_3$ & $c_4$ & $c_1 - c_N$ \\
\midrule
K$(2,2)$ & $1.28$ & $0.72$ & $0.34$ & $0.11$ & & $0.96$ & $0.54$ & $0.25$ & $0.08$ & $0.87$ \\
K$(3,3)$ & $1.20$ & $0.79$ & $0.42$ & $0.10^{\dagger}$ & & $0.95$ & $0.42$ & $0.12$ & $0.007^{\dagger}$ & $0.94$ \\
K$(4,4)$ & $1.16$ & $0.84$ & $0.50$ & --- & & $0.98$ & $0.38$ & $0.08$ & --- & $0.90$ \\
K$(5,5)$ & $1.13$ & $0.87$ & $0.55$ & --- & & $0.99$ & $0.34$ & $0.05$ & --- & $0.94$ \\
\midrule
K$(2,3)$ & $1.14$ & $0.61$ & $0.35$ & $0.20$ & & $0.91$ & $0.49$ & $0.28$ & $0.16$ & $0.75$ \\
K$(3,2)$ & $1.33$ & $0.92$ & $0.47$ & --- & & $1.06$ & $0.51$ & $0.13$ & --- & $0.93$ \\
\bottomrule
\end{tabular}
\end{table}

\paragraph{Resolution limit from the regularity hierarchy.}
For $m \geq 4$, the fourth compacton amplitude is not reported
because it falls below the numerical noise floor; for K$(3,3)$ it
is listed with a dagger.  Two
competing trends drive this resolution limit.  The emerged
amplitude $A_4$ decreases with $m$: cleanly resolved at
$A_4 = 0.11$ for K$(2,2)$, dropping to $A_4 \approx 0.10$ for
K$(3,3)$ where it lies near the $0.08$ noise threshold; no fourth
peak is detected for K$(4,4)$ or K$(5,5)$ at the present
spectral resolution.  Simultaneously, the Gibbs
oscillation amplitude at the compact-support boundary
\emph{increases} with~$m$, scaling with the boundary regularity
exponent $\alpha = 2/(m-1)$.  The $C^1$ boundary at $m = 2$
($\alpha = 2$) produces negligible Gibbs ringing, while the
H\"older boundary at $m = 5$ ($\alpha = 1/2$) produces substantial
overshoots that propagate as a spurious dispersive
wake~\cite{hyman2026filters}.  These trends cross near
$m = 3$, beyond which the expected fourth compacton amplitude
lies within or below the spectral noise floor.

This resolution limit is controlled by the same regularity
hierarchy that governs the convergence rates
of the numerical method~\cite{hyman2026filters}: the boundary
exponent $\alpha = 2/(m-1)$ simultaneously determines how many
compactons can be resolved and how rapidly the scheme converges
under grid refinement.  Whether a fourth compacton forms for $m \ge 4$ cannot be decided at
the present spectral resolution.

\subsection{Advection--dispersion balance in the mixed-exponent cases}

The comparison between K$(2,3)$ and K$(3,2)$ exposes the role
of the advection--dispersion balance in the emergence process.
When the advective exponent exceeds the dispersive one ($m > n$,
as in K$(3,2)$), energy concentrates into fewer, taller compactons:
K$(3,2)$ produces the largest leading amplitude ($A_1 = 1.33$) and
a speed spread across resolved compactons ($0.93$, only three
structures resolvable) comparable to the largest K$(m,m)$ value.  The reverse case $m < n$
(K$(2,3)$) distributes energy more uniformly across the emerged
compactons, yielding a narrower amplitude range and the slowest
separation of any case tested ($c_1 - c_4 = 0.75$).

This asymmetry admits a physical interpretation.  The exponent~$m$
controls nonlinear steepening at the compacton peak, where
$u \sim O(1)$, while the exponent~$n$ governs dispersive spreading
at the edges, where $u \ll 1$.  Stronger advection relative to
dispersion favors the formation of coherent, high-amplitude
structures.

\subsection{Implications for the structure-beyond-integrability framework}

The emergence experiments provide a third line of evidence, alongside
the universal formulas
(Theorem~\ref{thm:universal}) and the quasi-elastic
collisions (\S\ref{sec:collision-dynamics}), that the K$(m,m)$ equations
possess more structure than a generic non-integrable system.  In
particular:

\begin{enumerate}
\item Direct measurement of the leading compacton's propagation
  speed (by linear regression of peak positions across snapshots
  at $t = 100, 125, 150, 175$) confirms the universal formula
  $c = [(m+1)/(2m)]\,A^{m-1}$ to within $1\%$ for all four
  K$(m,m)$ cases.  For the mixed-exponent cases the leading speed
  obeys the exact traveling-wave relation
  $c = [(n+1)/(m+n)]\,A^{m-1}$, obtained from the crest of the first
  integral of the K$(m,n)$ profile equation; the exact speed
  equals the K$(m,m)$ prediction, evaluated with the advective
  exponent~$m$, times the factor $2m(n+1)/[(m+n)(m+1)]$, that is
  $1.067$ ($+6.7\%$) for K$(2,3)$ and $0.900$ ($-10.0\%$) for
  K$(3,2)$; the directly
  measured leading speeds then agree with the exact relation to
  within $0.7\%$ for K$(2,3)$ and $3.1\%$ for K$(3,2)$
  (Supplement, Table~S6).
\item The leading compacton speed $c_1$ is approximately
  independent of~$m$ for unit-amplitude initial data, varying
  only between $0.95$ and $0.99$ across $m \in \{2,3,4,5\}$, a
  universality not predicted by any existing theory.
\item The emergence process respects the four local conservation
  laws of Theorem~\ref{thm:finite-hierarchy} (and $I_4$ for
  $m = 2, 4$, where the initial pulse lies in its domain), and the constraint-deficit
  mechanism of Theorem~\ref{thm:debris-bound} should apply to the
  dispersive remnant left behind.
\end{enumerate}

Developing a predictive theory for compacton emergence (analogous
to inverse scattering for KdV) remains an open problem.
The numerical results in Table~\ref{tab:emergence}
provide quantitative targets for such a theory.  A natural
first step would be to establish whether the number of emerged
compactons depends on any calculable property of the initial pulse
(its mass $I_0$, energy $I_1$, or some analogue of the
Schr\"odinger eigenvalue count).

\section{Traveling waves on a positive background}
\label{sec:shelf-waves}

The compactons studied in Sections~\ref{sec:structure-beyond}
and~\ref{sec:nonintegrability} arise as traveling waves on a zero
background: the undisturbed state is $u = 0$, and the wave amplitude
returns to zero at finite distance.  A natural generalization replaces
the zero background with a constant positive shelf $u = a > 0$ and
seeks traveling wave solutions of the form
\begin{equation}\label{eq:shelf-ansatz}
u(x,t) = a + f(x - ct), \qquad a > 0, \quad
f(\xi) \to 0 \ \text{as}\ |\xi| \to \infty.
\end{equation}
This extension has physical motivation: real wave phenomena rarely
occur on quiescent backgrounds, and the positive shelf introduces a
characteristic scale that interacts nontrivially with the nonlinear
dispersion.  It also has mathematical motivation: the background
parameter $a$ controls a bifurcation in the traveling wave phase
portrait that reveals the compacton as a singular limit of smooth
solitary and periodic waves.

The analysis reinforces the constraint-deficit framework of
\S\ref{subsec:quasi-elastic}.  The compacton's emergence as an
isolated object (not part of a continuous family of smooth
traveling waves) explains why collision outcomes cannot smoothly
interpolate between elastic and inelastic: the compacton is
structurally rigid, and any perturbation either preserves it or
generates debris.  The singular limit analysis provides independent
confirmation of the compacton's special status within the K(m,m)
traveling-wave landscape.

Lax's theory of KdV periodic solutions~\cite{lax1975periodic}
constructs $N$-phase quasi-periodic waves, which reduce to cnoidal
waves for $N = 1$, by variational methods, using the infinitely many conserved functionals $F_j$ in
involution~\cite{lax1976almost}.  The solutions lie on
$N$-dimensional invariant tori, parametrized by spectral data from
the associated Schr\"odinger operator.  A central question for the
non-integrable K(m,m) family is: what periodic wave structure
survives when the infinite conservation hierarchy collapses to a
finite set?

This section answers that question through a systematic phase-plane
analysis.  The main results are: a critical speed formula
$c^* = ma^{m-1}$ that separates solitary from periodic behavior
(\S\ref{subsec:phase-classification}); explicit first-integral
and quadrature formulas for K(2,2) shelf waves
(\S\ref{subsec:k22-explicit}); a singular limit theorem showing
how the compacton emerges through simultaneous loss of regularity
and support compactification
(\S\ref{subsec:singular-limit}); and a precise comparison between
the resulting one-parameter periodic families and the rich
multi-dimensional torus structure of Lax's integrable theory
(\S\ref{subsec:lax-periodic-connection}).

\subsection{Hamiltonian reduction and the \texorpdfstring{$v$}{v}-variable}
\label{subsec:shelf-reduction}

Substituting the traveling waves $f(x - ct)$ on a shelf $a > 0$, $u = a + f$,  into the K(m,m)
equation~\eqref{eq:kmm}, writing $\xi = x - ct$, and integrating
once yields
\begin{equation}\label{eq:shelf-tw-ode}
-c(a + f) + (a + f)^m + \bigl[(a+f)^m\bigr]'' = C_0,
\end{equation}
where the integration constant $C_0 = -ca + a^m$ is determined by
the boundary condition $f \to 0$, $f' \to 0$ as $|\xi| \to \infty$.
Rearranging gives a second-order ODE for the total field $a + f$.

The equation simplifies through the substitution
\begin{equation}\label{eq:v-variable}
v = (a + f)^m,
\end{equation}
which converts the degenerate-dispersion structure into polynomial
form.  Since $a + f > 0$ for shelf waves (the field remains above
zero), this change of variable is a smooth diffeomorphism.  In the
$v$-variable, equation~\eqref{eq:shelf-tw-ode} becomes
\begin{equation}\label{eq:v-ode}
v'' + V'(v) = 0,
\end{equation}
where the potential is
\begin{equation}\label{eq:potential}
\boxed{V(v) = \frac{1}{2}\,v^2
  - \frac{cm}{m+1}\,v^{(m+1)/m}
  + (ca - a^m)\,v.}
\end{equation}
Equation~\eqref{eq:v-ode} is a Hamiltonian ODE with conserved
energy
\begin{equation}\label{eq:shelf-energy}
E = \tfrac{1}{2}(v')^2 + V(v).
\end{equation}

When $a = 0$, the substitution $v = u^m$ is singular at $u = 0$:
this is the point where the degenerate dispersion coefficient
$u^{m-1}$ vanishes and compact support becomes possible.  For
$a > 0$, the total field remains strictly positive and the
$v$-variable provides a globally smooth coordinate.  The transition
$a \to 0^+$ transforms the smooth Hamiltonian system into a
degenerate one, a mechanism we analyze in
\S\ref{subsec:singular-limit}.

\subsection{Phase-plane classification}
\label{subsec:phase-classification}

The equilibria of~\eqref{eq:v-ode} satisfy $V'(v) = 0$.  Direct
computation gives
$V'(v) = v - c\,v^{1/m} + (ca - a^m)$,
and substitution of $v_0 = a^m$ confirms that $V'(a^m) = 0$
for every $c$ and $a > 0$.  This is the equilibrium corresponding
to the undisturbed shelf $f = 0$.

\begin{theorem}[Critical speed and equilibrium classification]
\label{thm:critical-speed}
For K(m,m) with $m \geq 2$ and shelf height $a > 0$, define the
critical speed
\begin{equation}\label{eq:critical-speed}
\boxed{c^* = m\,a^{m-1}.}
\end{equation}
The equilibrium $v_0 = a^m$ of the potential~\eqref{eq:potential}
has the following character:
\begin{enumerate}
\item[\textnormal{(a)}] If $c < c^*$, then $V''(v_0) > 0$ and
  $v_0$ is a center: the phase plane contains a family of nested
  periodic orbits.
\item[\textnormal{(b)}] If $c = c^*$, then $V''(v_0) = 0$ and
  $v_0$ is a degenerate equilibrium.
\item[\textnormal{(c)}] If $c > c^*$, then $V''(v_0) < 0$ and
  $v_0$ is a saddle point: the phase plane contains a homoclinic
  orbit enclosing the region $v_0 < v < v_1$.
\end{enumerate}
\end{theorem}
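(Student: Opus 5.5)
The plan is to compute $V''(v_0)$ explicitly and then read off the local and global phase portrait from the one-degree-of-freedom Hamiltonian~\eqref{eq:shelf-energy}. Differentiating $V'(v) = v - c\,v^{1/m} + (ca - a^m)$ gives
\[
V''(v) = 1 - \frac{c}{m}\,v^{(1-m)/m}, \qquad
V''(a^m) = 1 - \frac{c}{m\,a^{m-1}} = 1 - \frac{c}{c^*}.
\]
The sign of $V''(v_0)$ is therefore exactly the sign of $c^* - c$, which settles the equalities and inequalities in (a)--(c) at once. For the linearization of $v'' + V'(v) = 0$ at $v_0$, the eigenvalues are $\pm\sqrt{-V''(v_0)}$. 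This gives a linear center when $V''(v_0)>0$, a hyperbolic saddle when $V''(v_0)<0$, and a nilpotent (degenerate) equilibrium when $V''(v_0)=0$, which is all that (b) asserts.

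To upgrade from the linearization to the statements about orbits, I will use the conservative structure. In case (a), $v_0$ is a strict local minimum of $V$. The level sets $E = V(v_0)+\varepsilon$ near $v_0$ are then closed curves, because a nonlinear center of a planar Hamiltonian system is a genuine center. This yields the nested family of periodic orbits.

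In case (c), $v_0$ is a strict local maximum of $V$, and I will need the existence of a turning point $v_1 > v_0$ with $V(v_1) = V(v_0)$ and $V'(v_1) \ne 0$. For this I will use the shape of $V'$. On $v>0$, $V'$ is convex in $v$, since $V'''(v) = \tfrac{c(m-1)}{m^2}v^{(1-2m)/m} > 0$. So $V'$ has at most two positive zeros. Since $V'(v_0)=0$ and $V''(v_0)<0$, $V'$ is negative just to the right of $v_0$. Since $V'(v)\sim v \to +\infty$, there is exactly one further zero $v_* > v_0$, which is a minimum of $V$. Moreover $V(v) \sim v^2/2 \to \infty$, so $V$ decreases on $(v_0,v_*)$ and increases on $(v_*,\infty)$. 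It therefore crosses the level $V(v_0)$ exactly once at some $v_1 > v_*$, where $V'(v_1) > 0$.

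The level set $E = V(v_0)$ restricted to $[v_0,v_1]$ is then a closed curve through the saddle, namely the homoclinic orbit. It encloses the region $v_0<v<v_1$ in the sense stated, and the time to reach the saddle is infinite because $V(v_0)-V(v) \sim \tfrac12|V''(v_0)|(v-v_0)^2$.

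The main obstacle I expect is this global step in (c): showing that the potential well to the right of $v_0$ is bounded, so that the unstable manifold actually returns rather than escaping. The convexity of $V'$ together with the quadratic growth of $V$ is the route I would try first. One small check remains: that $V'$ has exactly one zero to the left of $v_*$, namely $v_0$, so that no other equilibrium interferes. The same convexity argument gives this. The hypothesis $m\ge2$ enters only through $m>1$, which makes $v^{1/m}$ concave and hence $V'$ convex.
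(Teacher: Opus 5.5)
Your proposal is correct and rests on the same computation as the paper, $V''(a^m) = 1 - c/(m\,a^{m-1}) = 1 - c/c^*$; the paper simply reads off center, saddle, or degenerate from the sign of this quantity. The paper's proof stops there and never justifies the homoclinic orbit claimed in (c). Your convexity argument ($V'''>0$ on $v>0$, so $V'$ has exactly the two positive zeros $v_0<v_*$) supplies that global step. It also proves that the crest root $v_1$ is unique, which the paper asserts in the proof of Theorem~\ref{thm:existence} using an intermediate-value argument that only gives existence.
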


\begin{proof}
Differentiating $V'(v) = v - c\,v^{1/m} + (ca - a^m)$ yields
\begin{equation}\label{eq:V-second-deriv}
V''(v) = 1 - \frac{c}{m}\,v^{(1-m)/m}.
\end{equation}
At $v_0 = a^m$, this evaluates to
$V''(a^m) = 1 - c/(m\,a^{m-1}) = 1 - c/c^*$.
The sign of $V''(a^m)$ determines the character of the equilibrium:
positive gives a center (restoring force), negative gives a saddle
(repelling), and zero gives the degenerate boundary case.
\end{proof}

The critical speed formula encodes a competition between the
propagation velocity $c$ and the effective dispersion speed
$ma^{m-1}$ set by the shelf height.  When the wave travels faster
than this threshold, the shelf equilibrium becomes unstable and a
localized excursion (solitary wave) is possible.  When it travels
more slowly, the equilibrium is stable and only oscillatory
(periodic) solutions exist near it.

\begin{theorem}[Existence of solitary and periodic waves]
\label{thm:existence}
Under the hypotheses of Theorem~\ref{thm:critical-speed}:
\begin{enumerate}
\item[\textnormal{(a)}] \textbf{Solitary waves.}
  For $c > c^*$, the energy surface
  $E = V(a^m)$ contains a unique homoclinic orbit in the half-plane
  $v > 0$.  In the original variable, $u = a + f$ where
  $f(\xi) > 0$ with $f(\xi) \to 0$ exponentially as
  $|\xi| \to \infty$.  The profile is $C^\infty$ and symmetric
  about its maximum.

\item[\textnormal{(b)}] \textbf{Periodic waves.}
  For $c < c^*$, a one-parameter family of periodic orbits exists
  near $v_0 = a^m$, parametrized by energy
  $E \in (V(v_0), E_{\mathrm{max}})$.  The period satisfies
  $T(E) \to 2\pi/\omega_0$ as $E \to V(v_0)^+$ and
  $T(E) \to \infty$ as $E \to E_{\mathrm{max}}$ if $a^{m-1} < c < c^*$,
  where $E_{\mathrm{max}} = V(v_s)$ at the saddle $v_s > 0$; if
  $0 < c \le a^{m-1}$ there is no saddle in $v > 0$, the family ends at
  the orbit through $v = 0$ with $E_{\mathrm{max}} = V(0)$, and
  $T(E_{\mathrm{max}})$ is finite (equal to $2\pi m/(m-1)$ at
  $c = a^{m-1}$).  Here
  \begin{equation}\label{eq:small-osc-freq}
  \omega_0 = \sqrt{V''(a^m)}
    = \sqrt{1 - c/(m\,a^{m-1})}.
  \end{equation}
\end{enumerate}
\end{theorem}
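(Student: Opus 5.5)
The whole argument is a one-degree-of-freedom phase-plane analysis of $v''+V'(v)=0$ on $v>0$, with $V'(v)=v-cv^{1/m}+(ca-a^m)$ and $V''(v)=1-\tfrac{c}{m}v^{(1-m)/m}$. Since $V''$ is strictly increasing in $v$ (because $m\ge2$ gives a negative exponent with $c>0$), $V'$ is strictly convex on $(0,\infty)$. Hence $V'$ has at most two zeros, with $V'(0^+)=ca-a^m$ and $V'(v)\to+\infty$. I will use this convexity repeatedly: it fixes the complete list of equilibria and hence the shape of $V$.

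\emph{Part (a).} Take $c>c^*$. Theorem~\ref{thm:critical-speed} gives $V''(a^m)<0$, so by convexity $V'>0$ on $(0,a^m)$ and $V'$ has exactly one further zero $v_c>a^m$, a minimum of $V$. Then $V$ decreases from $a^m$ to $v_c$ and increases to $+\infty$ beyond it. There is therefore a unique $v_1>v_c$ with $V(v_1)=V(a^m)$, and $V<V(a^m)$ on $(a^m,v_1)$.

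The level set $E=V(a^m)$ restricted to $[a^m,v_1]$ is then a homoclinic loop $v'=\pm\sqrt{2(V(a^m)-V(v))}$. It is unique in $v>0$, because on $(0,a^m)$ we have $V>V(a^m)$, which forbids any motion there. Several properties follow directly:
\begin{itemize}
\item Exponential decay comes from the hyperbolicity of the saddle, with rate $\sqrt{-V''(a^m)}$.
\item Symmetry about the turning point $v_1$ follows from reversibility $\xi\mapsto-\xi$.
\item Smoothness holds because $V$ is real-analytic on $v>0$ and $v\mapsto v^{1/m}$ is a diffeomorphism there.
\item Finally, $f=v^{1/m}-a>0$.
\end{itemize}
I also need to check that $v_1$ is a simple turning point ($V'(v_1)>0$), so that the orbit is reached in finite $\xi$.

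\emph{Part (b).} Take $c<c^*$. Now $a^m$ is a nondegenerate minimum, so nearby periodic orbits exist. Their period tends to $2\pi/\omega_0$ by linearization, and $T(E)=\sqrt2\int dv/\sqrt{E-V}$ is continuous on the family. The family extends up to the first obstruction, and convexity of $V'$ identifies it. If $V'(0^+)=ca-a^m>0$, that is $c>a^{m-1}$, there is a second zero $v_s<a^m$ of $V'$, which is a local maximum of $V$, i.e.\ a saddle. The orbits then fill the well up to the separatrix $E_{\max}=V(v_s)$, where $T\to\infty$ logarithmically because the saddle is hyperbolic. If $c\le a^{m-1}$, then $V'<0$ on $(0,a^m)$ and $V$ decreases from $V(0)$ to the minimum, so the family ends at the orbit touching $v=0$. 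For $c<a^{m-1}$, $V'(0)<0$ and the left turning point is transversal, so the limiting period is finite.

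The step I expect to be the main obstacle is the borderline claim $T(E_{\max})=2\pi m/(m-1)$ at $c=a^{m-1}$. There $V'(0)=0$ but $V''(0^+)=-\infty$, so the endpoint is degenerate; this is a non-Lipschitz equilibrium rather than a saddle. The plan is to recognise the orbit through $v=0$ as the compacton of Theorem~\ref{thm:universal} on a zero background. With $C_0=0$ it solves the zero-background ODE~\eqref{eq:tw-ode}. Concretely, at $c=a^{m-1}$ the shelf ODE for $u$ becomes $-cu+u^m+(u^m)''=0$, and its solution through $u=0$ is the compacton $u=A\cos^\alpha(\xi/\ell)$ with $c=\tfrac{m+1}{2m}A^{m-1}$. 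Gluing reflected copies of this compacton gives a periodic wave of period $\pi\ell=2\pi m/(m-1)$, and the boundary flux condition ensures this gluing is legitimate. I still have to verify that $v_{\max}$ on this orbit matches $A^m$ and that the left turning point is exactly $v=0$, which is consistent with $V(0)$ being the energy level. Checking that the quadrature integral converges at $v=0$ despite $V'(0)=0$ uses $V(v)-V(0)\sim -\tfrac{cm}{m+1}v^{(m+1)/m}$, which is integrable against $(E-V)^{-1/2}$.
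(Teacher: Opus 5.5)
Your phase-plane strategy is the paper's, and strict convexity of $V'$ is a good organizing device, but one step of part (a) is wrong as written.

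Having shown $V'>0$ on $(0,a^m)$, you conclude $V>V(a^m)$ there. The opposite holds: $V$ is increasing on $[0,a^m]$, so $V<V(a^m)$ on $[0,a^m)$. Motion on the level set $E=V(a^m)$ is therefore not forbidden to the left of the saddle. That level set also contains the left stable and unstable branches of the saddle, on which $v'=\pm\sqrt{2(V(a^m)-V(v))}$.

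Uniqueness of the homoclinic orbit in $v>0$ still holds, but for a different reason. Because $V(0)<V(a^m)$, these branches reach $v=0$ at finite $\xi$ with $|v'|=\sqrt{2(V(a^m)-V(0))}>0$. So they leave the half-plane (the total field $u$ hits zero) and never return to the saddle. With your sentence replaced by this, part (a) is complete. Note also that $V'(v_1)>0$ is immediate from $v_1>v_c$, so that check needs no separate work.

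Beyond this, your argument is a more complete version of the paper's. The paper does three things without full justification:
\begin{itemize}
\item it asserts the saddle/centre structure;
\item it obtains $v_1$ from $V\to+\infty$ and $V<V(a^m)$ just above $a^m$, which gives existence but not uniqueness of the root;
\item it treats part (b) by appeal to standard centre theory, reading the finiteness of $T(E_{\max})$ off the local behaviour $v^{-1/2}$ or $v^{-(m+1)/(2m)}$ of the integrand at $v=0$.
\end{itemize}
Your convexity argument instead derives the full equilibrium list, the uniqueness of $v_1$, and the threshold $c=a^{m-1}$ from the sign of $V'(0)=ca-a^m$. It also gives the hyperbolicity $V''(v_s)<0$ that the logarithmic divergence needs, and you should state it explicitly: a strictly convex $V'$ that is negative just to the right of its left zero cannot have zero slope there.

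Your identification of the borderline orbit with the compacton proves $T(E_{\max})=\pi\ell=2\pi m/(m-1)$. At $c=a^{m-1}$ one has $C_0=-ca+a^m=0$, so $V$ reduces to the potential $\Phi$ of Proposition~\ref{prop:uniqueness}. The paper's proof states this value but only shows the period is finite. The gluing step is unnecessary: the quadrature $2\int_0^{A^m}dv/\sqrt{-2\Phi(v)}$ is exactly the edge-to-edge transit time of the compacton orbit.
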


\begin{proof}
Part (a): When $c > c^*$, the equilibrium $v_0 = a^m$ is a saddle.
The separatrix energy is $E_{\mathrm{sep}} = V(a^m)$.  On the level
set $E = E_{\mathrm{sep}}$, the equation $V(v) = V(a^m)$ with
$v > a^m$ has a unique positive root $v_1$ (the crest value),
because $V(v) \to +\infty$ as $v \to +\infty$ while
$V(v) < V(a^m)$ for $v$ slightly above $a^m$.  The resulting
homoclinic orbit connects the saddle to itself through the excursion
$v_0 \to v_1 \to v_0$.  Symmetry of the orbit follows from the
time-reversibility $\xi \mapsto -\xi$ of~\eqref{eq:v-ode}.
Smoothness is inherited from the smooth Hamiltonian flow away from
the singular set $v = 0$, which the orbit never approaches since
$v \geq a^m > 0$.

Part (b): Standard Hamiltonian center theory.  The Lyapunov
center theorem guarantees a family of periodic orbits near any
non-degenerate center.  The period formula follows from the
quadrature $T(E) = 2\int_{v_{\min}}^{v_{\max}}
dv/\sqrt{2(E - V(v))}$, and the stated limits are standard
consequences of the linearization frequency $\omega_0$ and the
logarithmic divergence of the integral near the separatrix when a
saddle bounds the family; for $c \le a^{m-1}$ the boundary orbit
reaches $v = 0$, where the integrand behaves like $v^{-1/2}$ for
$c < a^{m-1}$ and like $v^{-(m+1)/(2m)}$ at $c = a^{m-1}$, integrable
in both cases, so the limiting period is finite.
\end{proof}

The next result strengthens part~(b) of
Theorem~\ref{thm:existence} by establishing strict
monotonicity of the period function.

\begin{proposition}[Period monotonicity]\label{prop:period-monotone}
For $m = 2, 3$ and every $c \in (0, c^*)$, the period function
$T(E)$ of the K(m,m) periodic family is strictly increasing:
$T'(E) > 0$ for all $E \in (V(v_0), E_{\mathrm{max}})$.
\end{proposition}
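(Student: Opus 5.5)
The plan is to treat $v''+V'(v)=0$ as a planar Hamiltonian system with potential $V(v)=\tfrac12 v^2-\tfrac{cm}{m+1}v^{(m+1)/m}+(ca-a^m)v$. I will make the substitution $v=s^m$, i.e.\ work with the total field $s=a+f$, which turns the fractional powers into polynomials. Then I will apply a Chicone-type sufficient criterion for monotonicity of the period function. Chicone's theorem states that if $V'$ has a single nondegenerate zero $v_0$ in the well and the function
\[
N(v)=\frac{V(v)-V(v_0)}{V'(v)^2}
\]
is strictly convex on the well (with $v\neq v_0$), then $T'(E)>0$. An equivalent and often easier form is
\[
6(V')^2\,(V'')^2-3(V')^2 V'''\,V'' \cdot{} \ldots \ge 0,
\]
or the Schaaf-type condition on $V'$. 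Concretely, I plan to verify Chicone's condition $N''>0$ on $(v_{\min},v_{\max})$, the interval swept by the periodic family. This interval runs up to the saddle $v_s$ when $a^{m-1}<c<c^*$, and down to $0$ otherwise. By Theorem~\ref{thm:critical-speed}(a), $V''(v_0)>0$ for $c<c^*$, so the nondegeneracy hypothesis holds.

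\textbf{Key steps.} First, I will pass to the variable $s$. Since $dv=ms^{m-1}ds$, the period integral becomes
\[
T(E)=2\int \frac{m s^{m-1}\,ds}{\sqrt{2(E-W(s))}}, \qquad W(s)=V(s^m),
\]
and $W$ is a polynomial in $s$. For $m=2$ it is a quartic, $W=\tfrac12 s^4-\tfrac{2c}{3}s^3+(ca-a^2)s^2$. For $m=3$ it is of degree $6$. Second, I will rescale $s=a\sigma$ and $c=c^*\lambda$ with $\lambda\in(0,1)$. This removes $a$ and leaves a one-parameter family in $\lambda$ for each $m$, which is why the result is stated only for $m=2,3$: the check becomes a finite polynomial verification. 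Third, I will compute Chicone's quantity $N''$ in the original variable $v$, using $V'=v-cv^{1/m}+(ca-a^m)$. I will clear denominators to get $N''\propto P_\lambda(\sigma)/(\text{positive})$, where $P_\lambda$ is a polynomial in $\sigma$ and $\lambda$. Fourth, I will show $P_\lambda>0$ on the relevant $\sigma$-range for all $\lambda\in(0,1)$. I expect to do this by factoring out the double root at $\sigma=1$ (which must appear, since $V'$ vanishes there) and then writing the cofactor as a sum of manifestly positive terms, or by a Sturm-sequence or discriminant argument in $\sigma$ uniformly in $\lambda$.

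\textbf{Main obstacle.} I expect the hardest step to be the fourth, the sign of $P_\lambda$, especially near the endpoints of the well. The difficulty is sharpest for $c\le a^{m-1}$, where the family reaches $v=0$. There $V'$ involves $v^{1/m}$, which is not smooth in $v$, so Chicone's hypotheses hold only on the open interval, and I must confirm that $N$ remains convex up to $v=0$. If direct convexity fails for some range of $\lambda$, my fallback is the criterion of Chow--Wang / Schaaf applied to $g(s)=W'(s)/(ms^{m-1})$ in the $s$-variable. Failing that, I would write $T'(E)$ via the Gavrilov/Picard--Fuchs derivative identity
\[
T'(E)=\frac{1}{E-V(v_0)}\oint \frac{(V'^2-2(V-V(v_0))V'')}{V'^2}\,\frac{dv}{|v'|}
\]
and show that the integrand weight is positive. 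For $m=2$ the quartic case reduces to known results for cubic-force oscillators, which I would cite as a check. For $m=3$ I anticipate a careful but finite computation, possibly assisted by a resultant computation to rule out interior zeros of $P_\lambda$.
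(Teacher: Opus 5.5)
Your plan matches the paper's proof: Chicone's convexity criterion for $\chi=\widetilde V/(\widetilde V')^2$, then the substitution $U=v^{1/m}$, which turns the numerator $6\widetilde V(\widetilde V'')^2-3(\widetilde V')^2\widetilde V''-2\widetilde V\widetilde V'\widetilde V'''$ into $c\,(U-a)^4R_m(U)/(\kappa_m U^{2m-1})$, then a hand check that $R_m>0$ on the annulus. In that check $R_2$ is an upward parabola tested at $U=0$ or at the saddle $U_s=c-a$, and $R_3$ has nonnegative coefficients for $c\le a^2$ and all-positive coefficients after writing $c=a^2+aU_s+U_s^2$ and $U=U_s+t$ for $a^2<c<3a^2$. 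Two corrections: your displayed ``equivalent form'' of the numerator is garbled (the correct expression is the one above), and the center is a root of order four, not two, because $\chi$ is smooth at $v_0$ while $(\widetilde V')^4$ vanishes to fourth order there, so you must divide out $(U-a)^4$ before the cofactor can be positive.
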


\begin{proof}
The reduced dynamics~\eqref{eq:v-ode} form the one-degree-of-freedom
Hamiltonian system $\tfrac12(v')^2+V(v)=E$ with the
potential~\eqref{eq:potential}. For $c\in(0,c^*)$ the equilibrium
$v_0=a^m$ is a nondegenerate center,
\[
V'(v_0)=0,\qquad
V''(v_0)=1-\frac{c}{c^*}>0,\qquad c^*=m\,a^{m-1}.
\]
Chicone's criterion~\cite{chicone1987monotonicity} applies to a
Hamiltonian $\tfrac12 y^2+W(x)$ with a center at the origin: the period
function is monotone increasing when $W/(W')^2$ is convex on the period
annulus. Shifting the center to the origin by $w=v-v_0$ and writing
$\widetilde V(w)=V(v_0+w)-V(v_0)$, the Chicone function
$\chi=\widetilde V/(\widetilde V')^2$ satisfies
\[
\chi''=\frac{N}{(\widetilde V')^4},\qquad
N=6\,\widetilde V\,(\widetilde V'')^2-3(\widetilde V')^2\widetilde V''
   -2\,\widetilde V\,\widetilde V'\,\widetilde V''' .
\]
Since $\widetilde V'''(w)=V'''(v_0+w)=\dfrac{c(m-1)}{m^{2}}\,(v_0+w)^{1/m-2}>0$,
the substitution to the physical field $U=v^{1/m}$ rationalizes $N$ and
exhibits the center as a quadruple root:
\[
N=\frac{c\,(U-a)^4\,R_m(U)}{\kappa_m\,U^{\,2m-1}},\qquad \kappa_m>0 \text{ (Appendix~\ref{app:symbolic})},
\]
so $\operatorname{sgn}N=\operatorname{sgn}R_m(U)$ on the annulus. For
$m=2$, $R_2(U)=3U^2+(12a-8c)U+(3a-2c)(a-c)$ is an upward parabola with
$R_2(0)=(3a-2c)(a-c)\ge 0$ when $c\le a$, and $R_2(c-a)=3(c-a)(2a-c)>0$ at
the inner (saddle) edge when $a<c<c^*=2a$; as its vertex lies left of that
edge, $R_2>0$ across the annulus. For $m=3$ the inner (saddle) edge is
also explicit, $U_s=\bigl(\sqrt{4c/a^{2}-3}-1\bigr)a/2$ for $a^{2}<c<c^*=3a^{2}$,
and one finds $R_3(U_s)=-\tfrac32\,a^{5}(\tilde c-1)(4\tilde c-3)\bigl(\sqrt{4\tilde c-3}-3\bigr)>0$
with $\tilde c=c/a^{2}$, while $R_3(0)=2a^{5}(\tilde c-2)(\tilde c-1)\ge0$ when
$c\le a^{2}$.  In fact $R_3>0$ on the whole annulus: here
\[
\begin{aligned}
R_3(U)={}&5U^5+20aU^4+10(5a^2-2c)U^3+20a(2a^2-c)U^2\\
&+(2a^2-c)(8a^2-3c)U+2a(a^2-c)(2a^2-c),
\end{aligned}
\]
whose coefficients are all nonnegative when $c\le a^2$, so $R_3>0$
for $U>0$; when $a^2<c<3a^2$, writing $c=a^2+aU_s+U_s^2$ and
$U=U_s+t$ gives
\[
\begin{aligned}
R_3={}&5t^5+5(4a+5U_s)\,t^4+30(a+U_s)^2\,t^3\\
&+10(a+U_s)(2a^2+5aU_s-U_s^2)\,t^2\\
&+(a+U_s)(a+2U_s)(5a^2+17aU_s-16U_s^2)\,t\\
&+3U_s(a-U_s)(a+U_s)(a+2U_s)^2,
\end{aligned}
\]
whose coefficients are all positive for $0<U_s<a$, so $R_3>0$ for
$U\ge U_s$. Hence, for $m = 2, 3$, $\chi'' > 0$ on the annulus, including
its continuous extension through the center, and Chicone's criterion
gives $T'(E)>0$.
Direct quadrature of $T(E)=\sqrt2\!\int dv/\sqrt{E-V(v)}$ confirms the
strict monotonicity (Supplement, Figure~S2, for K$(2,2)$; the other
cases were checked the same way).  For $m = 4, 5$ the explicit $R_m$ (Appendix~\ref{app:symbolic}) is
found positive on the annulus numerically, and the same holds for
$6 \le m \le 12$; no proof is given for $m \ge 4$.
\end{proof}

These two families, separated by the critical speed $c = c^*$ at
which the solitary-wave branch terminates
(Remark~\ref{rem:algebraic-soliton}), organize into the
classification shown in Table~\ref{tab:wave-taxonomy},
Figure~\ref{fig:wave-taxonomy}, and the phase portraits of
Figure~\ref{fig:phase-portraits} and Supplement, Figure~S1.  The bifurcation between
these regimes in the $(a, c)$ parameter plane is shown in
the bifurcation diagram (Supplement, Figure~S3).

\begin{table}[ht]
\centering
\caption{Traveling wave taxonomy for K(m,m) on background shelf 
$u = a >0$.  $^\dagger$Boundary regularity depends on~$m$:
$C^1$ for $m=2$, $C^0$ for $m=3$, $C^{0,\alpha}$ with
$\alpha = 2/(m-1)$ for $m > 3$
(Table~\ref{tab:regularity}).  ``Cnoidal'' denotes periodic waves;
for K(m,m) they are not Jacobi $\mathrm{cn}^2$ profiles
(\S\ref{subsec:k22-explicit}).  At $c = c^*$ the solitary-wave branch
terminates and no wave of either type exists
(Remark~\ref{rem:algebraic-soliton}).}
\label{tab:wave-taxonomy}
\begin{tabular}{@{} l c c c @{}}
\toprule
\textbf{Regime} & \textbf{Wave type} &
  \textbf{Tail behavior} & \textbf{Regularity} \\
\midrule
$a = 0$, $c > 0$ & Compacton & Compact support
  & $C^1$--$C^{0,\alpha}$$^\dagger$ \\
$a > 0$, $c > c^*$ & Solitary wave & Exponential decay
  & $C^\infty$ \\

$a > 0$, $c < c^*$ & Cnoidal wave & Periodic
  & $C^\infty$ \\
\bottomrule
\end{tabular}
\end{table}

\FloatBarrier
\begin{figure}[!htbp]
\centering
\includegraphics[width=\textwidth]{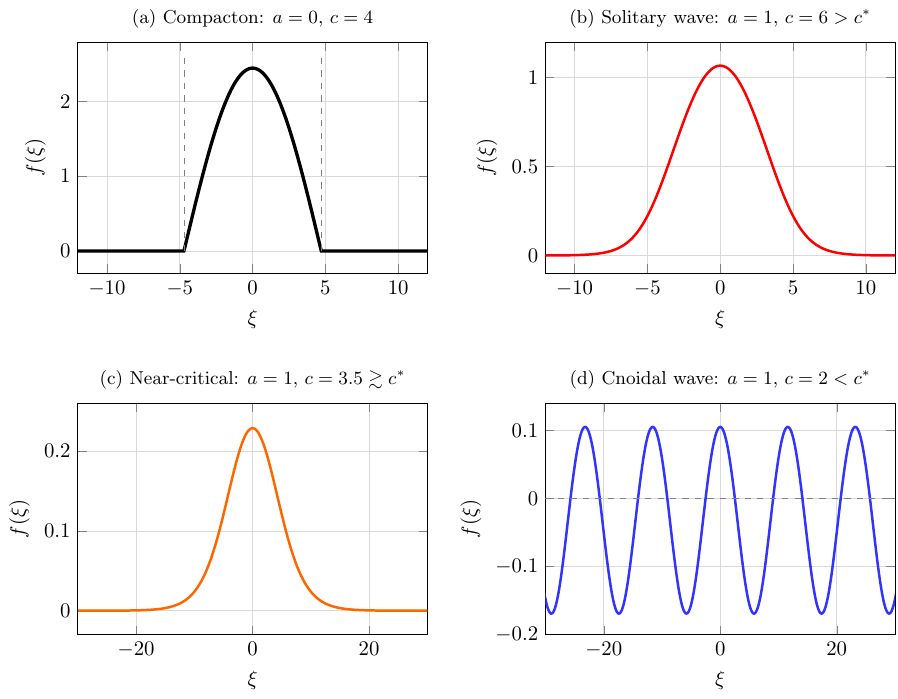}
\caption{Computed K(3,3) traveling wave profiles illustrating
the regimes of Table~\ref{tab:wave-taxonomy}.
The perturbation $f(\xi) = U(\xi) - a$ is plotted against
the co-moving coordinate $\xi = x - ct$.
(a)~Compacton ($a = 0$, $c = 4$): the exact
$U = \sqrt{3c/2}\,\cos(\xi/3)$
profile with compact support (dashed lines mark the boundary
$|\xi| = 3\pi/2$); the nonzero slope $U'(\pm 3\pi/2) \neq 0$
produces the visible corners characteristic of $C^0$ regularity.
(b)~Solitary wave ($a = 1$, $c = 6 \gg c^* = 3$): smooth pulse
with exponential decay rate $\kappa = \sqrt{c/(3a^2) - 1} = 1$
and completely rounded profile---the shelf eliminates the corner.
(c)~Near-critical solitary wave ($a = 1$, $c = 3.5$,
$\kappa \approx 0.41$): as $c \to {c^*}^+$ the pulse broadens
($\kappa \to 0$) and its amplitude falls to zero, so the solitary
branch ends at $c = c^*$ (Remark~\ref{rem:algebraic-soliton}).
(d)~Cnoidal wave ($a = 1$, $c = 2 < c^*$): periodic orbit
around the center equilibrium, with anharmonic profile
reflecting the nonlinear potential.
All profiles obtained by numerical integration of the
traveling wave ODE in the $v$-variable ($v = U^3$).}
\label{fig:wave-taxonomy}
\end{figure}

\begin{remark}[Structural instability of compactons]
\label{rem:structural-instability}
Any positive perturbation of the background, however small,
transforms the compacton into a $C^\infty$ solitary wave with
exponential tails.  The compact-support wave exists only at the
singular endpoint $a = 0$ of the parameter family.  This contrasts
with KdV solitons, which are smoothly embedded in the cnoidal wave
family: the sech$^2$ profile deforms continuously into Jacobi
elliptic cnoidal waves as the modulus varies.  For K(m,m), the
analogous deformation involves a discontinuous change in both
regularity and support structure.
\end{remark}

\begin{remark}[The critical speed $c = c^*$]\label{rem:algebraic-soliton}
At $c = c^* = m\,a^{m-1}$ the center and saddle equilibria of $V$
coalesce in a saddle-node bifurcation: $V'(a^m) = V''(a^m) = 0$ while
$V'''(a^m) = (m-1)/(m\,a^m) > 0$, so $v_0 = a^m$ is a stationary
inflection point of~$V$.  The homoclinic orbit does not survive.  As
$c \to {c^*}^+$ the solitary wave broadens ($\kappa \to 0$) and
flattens, its amplitude vanishing linearly,
\[
f_{\max} \sim \frac{3a\,(c - c^*)}{(m-1)\,c^*},
\]
which for K(2,2) is the onset behavior of~\eqref{eq:amplitude-speed}.
At $c = c^*$ itself $V$ is strictly increasing for $v > 0$, so an
orbit with $a + f > 0$ that leaves the shelf cannot return to it.
The level set $E = V(a^m)$ carries only a one-sided front of
depression, which approaches the shelf algebraically,
$f(\xi) \sim -12a/\bigl((m-1)\,\xi^2\bigr)$, with exponent $-2$ for
every~$m$, and reaches $u = 0$ at finite~$\xi$ with infinite slope.
The critical curve $c = c^*(a)$ is therefore the codimension-one
boundary between the solitary wave regime ($c > c^*$, exponential
tails) and the periodic regime ($c < c^*$, center oscillations); no
solitary wave lies on it.
\end{remark}

\begin{figure}[!htbp]
\centering
\includegraphics[width=\textwidth]{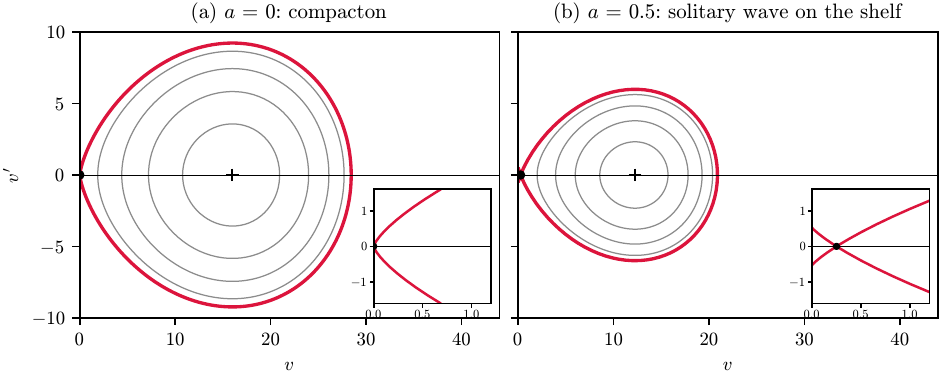}
\caption{Phase portraits of the K(2,2) traveling wave
ODE~\eqref{eq:v-ode} in the $(v, v')$ plane at speed $c = 4$,
without and with a background shelf.  Curves are level sets of the
energy~\eqref{eq:shelf-energy}; the thick curve is the level through
the shelf equilibrium $v_0 = a^2$ (dot), and $+$ marks the center.
(a)~$a = 0$: the shelf equilibrium at the origin is degenerate, and
the compacton orbit reaches it at finite~$\xi$ with a vertical
tangent (inset), which is why the support is compact.
(b)~$a = 0.5$, so that $c > c^* = 2a$: the shelf equilibrium is a
saddle, and the homoclinic orbit approaches it only as
$|\xi| \to \infty$, along its stable and unstable directions (inset),
which gives the exponential tails of the shelf solitary wave.  As
$a \to 0^+$ the saddle moves into the origin and the loop in~(b)
deforms into the compacton orbit in~(a)
(\S\ref{subsec:singular-limit}).}
\label{fig:phase-portraits}
\end{figure}

\subsection{Exact reduction for K(2,2)}
\label{sec:shelf-k22}
\label{subsec:k22-explicit}

The simplest member of the family, K(2,2):
$u_t + (u^2)_x + (u^2)_{xxx} = 0$,
admits the most complete analytical reduction within the family,
illustrating the general theory while revealing the limits of
closed-form analysis.
Setting $w = a + f$ (the total field) and $m = 2$, the
potential~\eqref{eq:potential} becomes
\begin{equation}\label{eq:V-k22}
V(v) = \frac{1}{2}\,v^2 - \frac{2c}{3}\,v^{3/2}
  + (ca - a^2)\,v,
\end{equation}
with equilibrium at $v_0 = a^2$ and critical speed $c^* = 2a$.

\subsubsection*{First integral and factored form}

An alternative approach works directly with the total-field
variable $w = \sqrt{v}$ (equivalently, $v = w^2$).%
\footnote{The symbol~$w$ is reused here for the total-field
variable; it should not be confused with the debris field~$w(x,t)$
of \S\ref{subsec:quasi-elastic}.}
The K(2,2)
traveling wave ODE in the $w$-variable reads
\begin{equation}\label{eq:w-ode}
-cw + w^2 + 2w\,w'' + 2(w')^2 + (ca - a^2) = 0.
\end{equation}
Treating $(w')^2$ as a dependent variable and solving the
resulting linear ODE yields a first integral
\begin{equation}\label{eq:first-integral}
(w')^2 = \frac{cw}{3} - \frac{w^2}{4}
  - \frac{ca - a^2}{2} + \frac{D}{w^2},
\end{equation}
where the constant $D = ca^3/6 - a^4/4$ is fixed by the boundary
conditions $w \to a$, $w' \to 0$ as $|\xi| \to \infty$.

For K(2,2), the compacton limit can be verified explicitly.
Setting $a = 0$ in the first integral~\eqref{eq:first-integral}
gives $D = 0$, so $(w')^2 = cw/3 - w^2/4$, with solution
\begin{equation}\label{eq:compacton-recovery}
w_0(\xi) = \frac{4c}{3}\cos^2\!\left(\frac{\xi}{4}\right),
\qquad |\xi| \leq 2\pi,
\end{equation}
recovering the standard K(2,2)
compacton~\cite{rosenau1993compactons}.

For $a > 0$, however, the first integral does not yield
an explicit profile $w(\xi)$.  Multiplying~\eqref{eq:first-integral} by $4w^2$ and factoring the
resulting quartic polynomial produces the central identity of this
section.  Since $w = a$ is a double root (reflecting the boundary
condition $w \to a$, $w' \to 0$), the quartic takes the form
\begin{equation}\label{eq:factored-form}
\boxed{4w^2(w')^2 = (w - a)^2\,(w_+ - w)\,(w - w_-),}
\end{equation}
where $w_\pm$ are the roots of the residual quadratic
\begin{equation}\label{eq:residual-quadratic}
Q(w) = w^2 + \Bigl(2a - \frac{4c}{3}\Bigr)\,w
  + \Bigl(a^2 - \frac{2ca}{3}\Bigr).
\end{equation}
The discriminant of $Q$ simplifies to
\begin{equation}\label{eq:disc}
\Delta_Q = \frac{8c}{9}\,(2c - 3a),
\end{equation}
which is positive for all $c > c^* = 2a > 0$.  The two roots are
\begin{equation}\label{eq:wpm}
w_\pm = \frac{2c}{3} - a \pm
  \frac{1}{3}\sqrt{2c(2c - 3a)}\,,
\end{equation}
with $w_+ > a$ giving the separatrix crest (the maximum value
attained on the homoclinic orbit) and $w_- < 0$ lying in
the unphysical half-plane.  The double root at $w = a$ reflects the
saddle-point structure at the shelf equilibrium, and the orbit
oscillates between $w = a$ and $w = w_+$.

\subsubsection*{Solitary wave properties}

From the factored form~\eqref{eq:factored-form}, the solitary wave
profile has the following properties.

The peak excursion above the shelf is
\begin{equation}\label{eq:amplitude-speed}
\boxed{f_{\max} = w_+ - a
  = \frac{2c}{3} - 2a + \frac{1}{3}\sqrt{2c(2c - 3a)}\,.}
\end{equation}
In the $v$-variable, the crest value is $v_1 = w_+^2$, which lies
\emph{above} the second equilibrium $v_c = (c - a)^2$: the
separatrix overshoots the center before returning to the saddle.
In the compacton limit $a \to 0$, the formula reduces to
$f_{\max} \to 4c/3$, recovering the known K(2,2) compacton
amplitude.

The exponential decay rate, determined by linearization of the
Hamiltonian system about the saddle, is
\begin{equation}\label{eq:decay-rate}
\kappa = \sqrt{\frac{c - 2a}{2a}},
\end{equation}
so that $f(\xi) \approx A\,e^{-\kappa|\xi|}$ for large
$|\xi|$, where $A$ depends on $a$ and $c$.  As $a \to 0^+$,
the decay rate $\kappa \to \infty$:
the exponential tails steepen without bound, approaching the
vertical cutoff of the compacton.

\subsubsection*{Non-sech\texorpdfstring{$^2$}{2} profile and quadrature}

Testing any sech$^2$ ansatz against~\eqref{eq:factored-form} reveals
an obstruction: the right-hand side factors as
$(w - a)^2(w_+ - w)(w - w_-)$ with distinct linear terms rather
than a perfect square.  The exact profile is therefore
\emph{not} sech$^2$ and must be obtained by implicit quadrature:
\begin{equation}\label{eq:quadrature}
\xi = \pm \int_{w(\xi)}^{w_+}
  \frac{2w'\,dw'}{(w' - a)\sqrt{(w_+ - w')(w' - w_-)}}\,.
\end{equation}
This non-standard character is a direct consequence of the
nonlinear dispersion in K(2,2), which produces a $v^{3/2}$ term in
the potential rather than the cubic responsible for KdV's sech$^2$
profiles.  The integral~\eqref{eq:quadrature} is nevertheless
elementary.  Splitting $2w/(w - a) = 2 + 2a/(w - a)$ and writing
$s = w - a$, $A_+ = w_+ - a$ and $A_- = a - w_-$ gives
\[
\begin{aligned}
\xi &= 2\Bigl[\frac{\pi}{2}
  - \arcsin\frac{2s - A_+ + A_-}{A_+ + A_-}\Bigr]\\
&\quad + \frac{1}{\kappa}\,
 \ln\frac{2A_+A_- + (A_+ - A_-)\,s
          + 2\sqrt{A_+A_-}\,\sqrt{(A_+ - s)(s + A_-)}}
         {s\,(A_+ + A_-)},
\end{aligned}
\]
where the coefficient of the logarithm is exactly $1/\kappa$ because
$A_+A_- = 2a(c - 2a)$.  The profile is therefore explicit as the
inverse of an elementary function, and the limit $s \to 0$ recovers the
decay rate~\eqref{eq:decay-rate} directly.

\subsubsection*{Periodic waves}

For $c < c^* = 2a$, the equilibrium $v_0 = a^2$ is a center,
and the phase portrait contains a family of nested periodic orbits
with small-amplitude period
$T_0 = 2\pi/\sqrt{1 - c/(2a)}$,
which diverges as $c \to (2a)^-$.  In the $w$-variable
the first integral is a quartic polynomial $P_4(w)$, so the cnoidal
profiles are given implicitly by elliptic integrals.  They are not
the Jacobi $\mathrm{cn}^2$ profiles of KdV: the $v^{3/2}$ term in the
potential makes the integrand $w\,dw/\sqrt{P_4(w)}$ rather than
$dw/\sqrt{P_4(w)}$.  Detailed period-energy analysis appears in
Appendix~\ref{app:periodic-detail}.

\subsection{The singular limit: from shelf waves to compactons}
\label{subsec:singular-limit}

The most revealing feature of the shelf-wave family is the
behavior as $a \to 0^+$.  The background vanishes, the smooth
Hamiltonian system degenerates, and the compacton appears as a
singular endpoint of the one-parameter family of solitary waves.
The following theorem makes this precise.

\begin{theorem}[Compacton as singular limit]
\label{thm:singular-limit}
Let $w_a(\xi) = a + f_a(\xi)$ denote the K(m,m) shelf solitary wave
of speed $c > 0$ on background $a > 0$, and let $f_0$ denote the
K(m,m) compacton of speed $c$ (on zero background).  Then:
\begin{enumerate}
\item[\textnormal{(a)}]
  $w_a - a \to f_0$ in $L^2(\mathbb{R})$ as $a \to 0^+$.
\item[\textnormal{(b)}]
  Convergence cannot hold in $C^k$ for $k$ exceeding the
  compacton's boundary regularity: each $f_a$ is
  $C^\infty(\mathbb{R})$, while $f_0$ has the regularity of the
  K(m,m) compacton profile: $C^1$ for $m = 2$, $C^0$ for $m = 3$,
  and $C^{0,\alpha}$ with $\alpha = 2/(m-1)$ for $m > 3$
  (cf.\ Table~\ref{tab:regularity}).
\item[\textnormal{(c)}]
  $\operatorname{supp}(f_a) = \mathbb{R}$ for every $a > 0$
  (exponential tails), but $\operatorname{supp}(f_0)$ is a compact
  interval.
\item[\textnormal{(d)}]
  The decay rate satisfies
  $\kappa(a) = \sqrt{(c - m\,a^{m-1})/(m\,a^{m-1})} \to \infty$
  as $a \to 0^+$.
\end{enumerate}
\end{theorem}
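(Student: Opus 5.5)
Parts (b), (c) and (d) are essentially immediate, so I would dispose of them first and spend the effort on the $L^2$ convergence in (a). Note that the solitary wave of speed $c$ exists once $c > c^* = m a^{m-1}$ (Theorem~\ref{thm:existence}(a)), and $c^*\to 0$ as $a\to 0^+$, so for fixed $c$ the family $f_a$ is defined for all small $a$. For (d), linearize $v''+V'(v)=0$ at the saddle $v_0=a^m$: the decay rate is $\kappa=\sqrt{-V''(a^m)}=\sqrt{c/c^*-1}$ by \eqref{eq:V-second-deriv}, which is the stated formula and diverges as $a\to0^+$ since $c^*\to0$. Because $v=(a+f)^m$ is a smooth diffeomorphism near $v_0>0$, the decay rate of $f$ equals that of $v-v_0$. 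Part (c) then follows from Theorem~\ref{thm:existence}(a): $f_a>0$ everywhere and decays only exponentially, so its support is $\mathbb R$, while $f_0$ is supported on $|\xi|\le\pi\ell/2$ by Theorem~\ref{thm:universal}. Part (b) is a regularity mismatch. Each $f_a$ is $C^\infty$ (Theorem~\ref{thm:existence}(a)), and $C^k$ convergence on $\mathbb R$, or even on a neighborhood of $\xi_0=\pi\ell/2$, would force the uniform limit of $f_a^{(j)}$, $j\le k$, to be continuous. For $k$ above the boundary regularity of Table~\ref{tab:regularity}, the relevant derivative of $f_0$ jumps ($m=3$, $k=1$; $m=2$, $k=2$, since $U''$ jumps from $-A/8$ to $0$) or is unbounded ($m>3$, $k=1$). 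That gives the contradiction.

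For (a) I would work with the first integral in the physical variable, so that both $a>0$ and $a=0$ are governed by the same ODE with parameters depending continuously on $a$. From $E=\tfrac12(v')^2+V(v)=V(a^m)$ with $v=w^m$ we get
\[
\tfrac12 m^2 w^{2m-2}(w')^2=V(a^m)-V(w^m)=:G_a(w).
\]
Here $G_a$ is continuous in $(a,w)$ on $[0,\infty)^2$, and $G_0(w)=\tfrac{cm}{m+1}w^{m+1}-\tfrac12 w^{2m}$ reproduces the compacton (consistent with \eqref{eq:compacton-recovery} for $m=2$). Centering both profiles at their crest $\xi=0$, I would proceed in three steps.

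\emph{Step 1: the crests converge.} The crest $w_+(a)$ is the simple largest root of $G_a$, so $w_+(a)\to w_+(0)=A$ by the implicit function theorem.

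\emph{Step 2: convergence on compact subsets of the interior of the support.} On any interval where $w_0\ge\eta>0$, the ODE $w'=-\sqrt{2G_a(w)}/(m w^{m-1})$ has a right-hand side that is Lipschitz in $w$ and continuous in $a$. This gives locally uniform convergence on $(-\pi\ell/2,\pi\ell/2)$ by continuous dependence. The square-root singularity at the crest is handled by the symmetric second-order form, or equivalently by inverting the quadrature $\xi(w)=\int_w^{w_+}m s^{m-1}\,ds/\sqrt{2G_a(s)}$ and using dominated convergence.

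\emph{Step 3: the tails carry negligible $L^2$ mass.} This is the main obstacle. I would show that $f_a$ is uniformly small beyond $\xi_0+\epsilon$. Using the quadrature, the set where $w_a\ge a+\eta$ has length converging to the length of $\{w_0\ge\eta\}$, which is at most $\pi\ell$. On the remainder, $f_a\le\eta$, and I would bound $\int f_a^2$ there by splitting it into a bounded transition region of width $O(1)$ and the exponential tail, whose contribution is $O(\eta^2/\kappa(a))$. The difficulty is controlling the transition region uniformly in $a$, where the saddle merges with the degenerate origin. I would first try the comparison $G_a(w)\ge G_0(w)-Ca^{m-1}w^m$ near $w\approx a$, which bounds how long the orbit lingers at heights between $a$ and $\eta$. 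Letting $\eta\to0$ then yields $\|f_a-f_0\|_{L^2}\to0$.
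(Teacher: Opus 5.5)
Your arguments for (b)--(d) are sound and agree with the paper. There is one slip: for $m=2$ the one-sided limit at the edge is $U''(\xi_0^-)=+A/8$; the value $-A/8$ is the crest value. The jump argument is unaffected. Steps~1--2 of (a) are a workable variant of the paper's interior argument.

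The gap is Step~3. You flag it as the main obstacle but leave it as something you would try, and as sketched it does not close. Two of its ingredients fail.

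\emph{The comparison.} The inequality $G_a(w)\ge G_0(w)-Ca^{m-1}w^m$ is false for $m>2$. From $G_a(w)-G_0(w)=\tfrac{c}{m+1}a^{m+1}-\tfrac12 a^{2m}-(ca-a^m)w^m$, the correction is of order $a\,w^m$, not $a^{m-1}w^m$. Concretely, at $w=a$ one has $G_a(a)=0$, while $G_0(a)-Ca^{2m-1}\approx\tfrac{cm}{m+1}a^{m+1}>0$ for small $a$.

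\emph{The tail rate.} Your tail bound $O(\eta^2/\kappa(a))$ uses the linearized saddle rate, which is only asymptotic. On the tail write $|v'|=\rho(v)(v-v_0)$. Taylor's theorem gives $\rho(v)^2=-V_a''(\tilde v)\le -V_a''(v_0)=\kappa(a)^2$. So a rate comparable to $\kappa(a)$ is available only once $f_a=O(a)$. Reaching that height within a distance bounded uniformly in $a$ is exactly the transition estimate that is missing.

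The idea you need is that the same identity also gives a uniform \emph{lower} bound on the rate, which makes a separate transition region unnecessary.

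\begin{itemize}
\item Take a matching point $R=\pi\ell/2+\varepsilon$ just outside the compacton support, and set $v_R=(a+f_a(R))^m$.
\item Because $V_a''(v)=1-(c/m)v^{1/m-1}$ is increasing in $v$, you get $\rho(v)^2\ge -V_a''(v_R)=(c/m)v_R^{(1-m)/m}-1=:\rho_R^2$ for all $v\in(v_0,v_R]$.
\item Your Step~2, combined with monotonicity of $f_a$ in $|\xi|$ (or with your level-set length argument), gives $f_a(R)\to0$. Hence $v_R\to0$ and $\rho_R\to\infty$.
\item Therefore $v-v_0\le (v_R-v_0)e^{-\rho_R(|\xi|-R)}$ on the tail.
\item Superadditivity of $s\mapsto s^m$ gives $f_a\le (v-v_0)^{1/m}$.
\item Integrating, $\int_{|\xi|>R} f_a^2\,d\xi\le m\,(v_R-v_0)^{2/m}/\rho_R\to0$.
\end{itemize}

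This is the paper's Step~3. The degeneracy at the origin, which you regarded as the difficulty, is what makes $V_a$ strongly concave on the tail. That concavity forces fast decay from any sufficiently small matching height.
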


\begin{proof}
(a) Fix $\varepsilon > 0$ and set $R_\varepsilon = \pi\ell_0/2 +
\varepsilon$, where $\ell_0 = 2m/(m-1)$ is the compacton
half-width parameter.  We split the $L^2$ norm and bound
each piece.

\emph{Step~1 (Uniform bound).}
The potential $V_a(v)$ depends continuously on~$a$, and for large
$v$ is dominated by the positive term $\frac{1}{2}v^2$, which
drives $V_a(v) \to \infty$.  Consequently the separatrix crest
$v_1(a)$ (the largest root of $V_a(v) = V_a(a^m)$) varies
continuously and remains uniformly bounded: $v_1(a) \leq M$
for all sufficiently small~$a$.  Since $f_a = v^{1/m} - a$ is
controlled by $v_1(a)^{1/m}$, the family $\{f_a\}$ satisfies
$\|f_a\|_{L^\infty} \leq C$ independently of~$a$.

\emph{Step~2 (Interior convergence).}
Fix $\eta \in (0, \pi\ell_0/2)$.  On $[-(\pi\ell_0/2-\eta),\,
\pi\ell_0/2-\eta]$ the limit orbit stays in $v > 0$, where the vector
field is smooth, so $f_a \to f_0$ uniformly there by smooth
dependence on~$a$ (the separatrix crest $v_1(a)$ converges by
continuity).  On $\pi\ell_0/2-\eta \le |\xi| \le R_\varepsilon$,
evenness and monotonicity of $f_a$ in $|\xi|$ give
$0 \le f_a(\xi) \le f_a(\pi\ell_0/2-\eta) \to f_0(\pi\ell_0/2-\eta)$,
which is small for small~$\eta$.  Since $|f_a - f_0| \le 2C$,
dominated convergence and $\eta \to 0$ give
$\int_{|\xi|\le R_\varepsilon}|f_a - f_0|^2\,d\xi \to 0$; in
particular $f_a(R_\varepsilon) \to 0$.

\emph{Step~3 (Tail decay).}
For $|\xi| > R_\varepsilon$ the compacton vanishes, so it suffices to
show $\int_{|\xi|>R_\varepsilon} f_a^2\,d\xi \to 0$.  Write
$v_R = (a + f_a(R_\varepsilon))^m$ and $v_0 = a^m$.  On the tail $v$
decreases from $v_R$ to $v_0$, and the first integral gives
$|v'| = \rho(v)\,(v - v_0)$ with
$\rho(v)^2 = 2\,[V_a(v_0) - V_a(v)]/(v - v_0)^2$.  Let
$\rho(a) := \min_{v \in [v_0, v_R]} \rho(v) > 0$.  Then
$|v'| \ge \rho(a)(v - v_0)$, so
\[
v(\xi) - v_0 \;\le\; (v_R - v_0)\, e^{-\rho(a)(|\xi| - R_\varepsilon)},
\qquad |\xi| \ge R_\varepsilon .
\]
(The linearized rate $\kappa(a)$ of part~(d) is the limit of $\rho$
as $v \to v_0^+$, not a valid rate from a finite matching point.)
Superadditivity, $(a+f)^m \ge a^m + f^m$, gives
$f_a \le (v - v_0)^{1/m}$, hence
\[
\int_{|\xi| > R_\varepsilon} f_a^2\,d\xi
\;\le\; \frac{m\,(v_R - v_0)^{2/m}}{\rho(a)} .
\]
By the mean value theorem, for each $v \in (v_0, v_R]$ there is
$\tilde v \in (v_0, v)$ with $\rho(v)^2 = -V_a''(\tilde v)$; since
$V_a''(w) = 1 - (c/m)\,w^{1/m-1}$ is increasing in~$w$,
$\rho(v)^2 \ge -V_a''(v_R) = (c/m)\,v_R^{(1-m)/m} - 1$, and the same
bound holds for $\rho(a)$.  As $a \to 0^+$, $v_R \to 0$ by Step~2, so
$\rho(a) \to \infty$ while $v_R - v_0 \to 0$, and the tail estimate
follows.
Combining Steps~2 and~3 yields
$\|f_a - f_0\|_{L^2} \to 0$.

(b) For $a > 0$, the profile $f_a$ is analytic because the
Hamiltonian flow has $v > a^m > 0$.  The compacton $f_0$ has
reduced regularity at the support boundary
(\S\ref{subsec:regularity}): $C^1$ for $m=2$, $C^0$ for $m=3$,
and $C^{0,\alpha}$ for $m > 3$.  This regularity gap cannot be
bridged by convergence in any $C^k$ norm exceeding the limit
regularity.

(c) Immediate from the exponential versus compact support structure.

(d) The decay rate $\kappa(a) = \sqrt{c/(m\,a^{m-1}) - 1}$
diverges as $a \to 0^+$ since $m\,a^{m-1} \to 0$.
\end{proof}

For K(2,2), setting $a = 0$ recovers the standard
compacton~\eqref{eq:compacton-recovery}.  The regularity drops from
$C^\infty$ (for any $a > 0$) to $C^1$ ($m - 1 = 1$ for K(2,2)):
the second derivative $w_0''$ has a jump discontinuity at
$\xi = \pm 2\pi$.  This regularity drop is
codimension-one in the $(a, c)$ parameter space, occurring along
the boundary $a = 0$ where the dispersion coefficient
$(a+f)^{m-1}$ degenerates to zero at
$u = 0$.  The resulting compacton regularity degrades with
increasing $m$: $C^1$ for K(2,2), $C^0$ for K(3,3), and
$C^{0,\alpha}$ with $\alpha = 2/(m-1)$ for $m > 3$
(Table~\ref{tab:regularity}).

The singular limit theorem identifies a \emph{double singularity}
at $a = 0$ (the period divergence of the periodic family is a separate
limit, $E \to E_{\max}$ at fixed $a$ with $c > a^{m-1}$):
regularity is lost ($C^\infty$ to the compacton's boundary class),
and topology changes
(exponential tails become compact support).  These two phenomena
occur simultaneously through a single mechanism: the vanishing
of the dispersion coefficient at the background level.
Figure~\ref{fig:singular-limit} illustrates the progressive
steepening of the wave tails as $a \to 0^+$ for K(2,2).

\FloatBarrier
\begin{figure}[!htbp]
\centering
\includegraphics{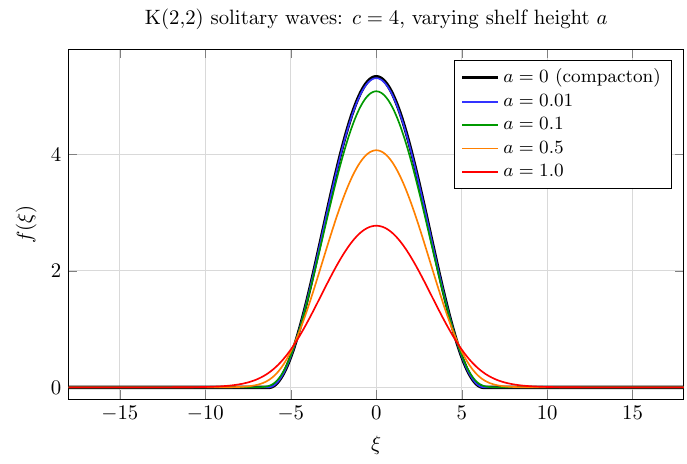}
\caption{Singular limit sequence for K(2,2) shelf solitary waves
with fixed speed $c = 4$.  As the shelf height $a$ decreases
toward zero, the smooth solitary waves (colored curves, obtained
by numerical integration of the factored first integral~\eqref{eq:factored-form})
converge to the compacton (black curve,
$f_0 = \tfrac{4c}{3}\cos^2(\xi/4)$).  The amplitude
$f_{\max} = w_+ - a$ increases toward $4c/3$ while the decay rate
$\kappa = \sqrt{(c-2a)/(2a)}$ diverges, steepening the
tails toward the vertical cutoff of the compacton.
The transition involves simultaneous loss of
analyticity ($C^\infty \to C^1$) and compactification of
support.  These exact profiles confirm the non-sech$^2$
character established in \S\ref{sec:shelf-k22}.}
\label{fig:singular-limit}
\end{figure}

\subsection{Periodic waves and the legacy of Lax (1975)}
\label{subsec:lax-periodic-connection}

In one of the foundational papers of integrable systems theory,
Lax~\cite{lax1975periodic} constructed periodic solutions of the
KdV equation through a variational principle: minimize the
conserved functional $F_{N+1}$ subject to the constraints
$F_j = A_j$ for $j = -1, 0, \ldots, N-1$.  The solutions lie on
$N$-dimensional invariant tori and are almost periodic in time.
The simplest case $N = 1$ recovers the classical cnoidal waves.
The 1976 companion~\cite{lax1976almost} reviewed the construction and
extended it to minimax critical points, and in its appendix the
present author computed the $N = 2$ case numerically, observing that
the solution resumed its initial shape, in a shifted position, after
a finite time.\footnote{I had just arrived at the Courant Institute as a first-year
graduate student when Peter Lax asked
me to compute the almost periodic cnoidal waves of the KdV equation.
The experience had a profound impact on my career and, eventually,
led to this paper.}

The variational characterization extends formally to K(m,m): traveling
waves satisfy $\delta\mathcal{H}/\delta u = c\,\delta P/\delta u + \kappa$
with $\kappa \in \ker\mathfrak{D}$ (\S\ref{subsec:hamiltonian-geometry}), which is
the $N = 1$ case of Lax's constrained optimization.  The
fundamental difference lies in the dimension of the resulting
solution families.

\subsubsection*{Finite conservation and one-parameter families}

Lax's construction of $N$-dimensional tori requires $N + 1$
functionally independent conserved functionals in involution.  For
KdV, the infinite hierarchy $\{F_j\}_{j=-1}^{\infty}$ provides
this for arbitrary $N$, and Lax's Theorem~5.6
of~\cite{lax1975periodic} establishes that the resulting solution
set is an $N$-dimensional manifold.

For K(m,m) with $m \geq 2$, the mechanism behind Lax's construction
is absent.  The equation carries five known conservation laws
(Theorem~\ref{thm:finite-hierarchy}), but, lacking a recursion
operator, they do not extend to an infinite hierarchy, and although
the five are in involution (\S\ref{subsec:conservation-laws}), three
of them are Casimirs of $\mathfrak{D}$, which generate no flow.  The two remaining
functionals, $\mathcal{H}$ and $P$, generate the time flow and
spatial translation, and their relative equilibria,
$\delta\mathcal{H}/\delta u = c\,\delta P/\delta u + \kappa$, are the
traveling waves: precisely Lax's $N = 1$ case.  The construction
therefore yields one-parameter families of periodic orbits,
parametrized by a single energy value.  No
two-parameter families arise from it, no invariant tori of dimension
greater than one are furnished by this mechanism, and the angle-action variables of
the integrable theory have no counterpart.

Table~\ref{tab:lax-comparison} collects the parallel and divergent
features.

\begin{table}[ht]
\centering
\small
\caption{Periodic wave structure: KdV (Lax, 1975) versus
K(m,m).  $T_0$ is the small-amplitude period and $T_{\max}$ the
largest period in the family (Theorem~\ref{thm:existence}).}
\label{tab:lax-comparison}
\begin{tabular}{@{} l l l @{}}
\toprule
\textbf{Feature} & \textbf{KdV} & \textbf{K(m,m), $m \geq 2$} \\
\midrule
Conservation laws & Infinitely many $F_j$
  & Five, in involution; three Casimirs \\
Periodic solutions & $N$-parameter families
  & One-parameter families \\
Solution manifold & $N$-dimensional tori
  & One-dimensional curves \\
Invariant tori & Yes (commuting flows)
  & No \\
Spectral characterization & Finite-gap potentials
  & None \\
Cnoidal wave formula & Jacobi elliptic $\mathrm{cn}^2$
  & Elliptic, not $\mathrm{cn}^2$ \\
Period range & $T_0 \to \infty$
  & $T_0 \to T_{\max}$ ($\infty$ only for $c > a^{m-1}$) \\
\bottomrule
\end{tabular}
\end{table}

\subsubsection*{What survives without integrability}

The traveling wave ODE is Hamiltonian for K(m,m) just as for KdV,
generating one-parameter periodic families by the standard center
theorem with variational characterization as constrained extremals
of $H$ subject to fixed $P$.  The period function $T(E)$ increases
monotonically from $T_0$ to $T_{\max}$, with $T_{\max} = \infty$ only
for $c > a^{m-1}$
(Proposition~\ref{prop:period-monotone}, for $m = 2, 3$;
Supplement, Figure~S2).

What does not survive is the multi-dimensional structure:
without the infinite hierarchy in involution, there is no
infinite family of commuting flows, no angle-action coordinates, no spectral
characterization through finite-gap potentials.  The KdV
periodic theory lives on finite-dimensional invariant tori;
the K(m,m) periodic theory lives on one-dimensional energy
curves.

\subsubsection*{The compacton as singular endpoint}

In Lax's theory, the KdV soliton appears as the
$T \to \infty$ limit of cnoidal waves: the elliptic modulus
$k \to 1$, but the solution remains analytic throughout.
For K(m,m), the corresponding limit is far more
singular: regularity loss ($C^\infty$ to the
compacton's boundary class; see Table~\ref{tab:regularity}),
and support compactification occur simultaneously
(Theorem~\ref{thm:singular-limit}).  This double singularity,
which has no analogue in Lax's integrable theory, arises from the
degenerate dispersion coefficient $u^{m-1}$ vanishing at $u = 0$.
The compacton is not merely a limiting wave profile but a point
where the Hamiltonian structure of the traveling wave ODE undergoes
qualitative degeneration.

\section{Concluding remarks}
\label{sec:conclusion}

Lax's 1968 discovery that operator compatibility conditions can
unlock the complete analytical structure of nonlinear dispersive
equations~\cite{lax1968integrals,gardner1967method,ablowitz1981solitons}
naturally raises the question of what structure persists when that
compatibility fails.

The K(m,m) compacton family provides a concrete answer.  These
equations fail every test of complete integrability
(\S\ref{sec:nonintegrability}): no Lax pair exists, the Painlev\'{e}
property fails, the symmetry algebra is finite-dimensional, and the
conservation hierarchy terminates.  Yet substantial mathematical
organization persists.  Universal algebraic formulas govern all
compacton parameters through a single
exponent (Theorem~\ref{thm:universal}).  These formulas yield
consequences that had not been previously identified:
the trigonometric conservation integrals vanish on every
K(m,m) compacton (Proposition~\ref{prop:trig-vanishing}), the
cosine-power ansatz is exclusive to the diagonal $m = n$
(Proposition~\ref{prop:exclusivity}), and the mass and translation
generator increase along the compacton family for all $m > 1$
(Corollary~\ref{cor:VK}).  Five known independent conservation
laws (exactly four local and one nonlocal) constrain the dynamics
in ways that are both strong enough
to produce quasi-elastic collisions and too weak to enforce
perfect elasticity.  The constraint-deficit analysis
(Theorem~\ref{thm:constraint-deficit}) shows that while these
laws constrain emergent amplitudes and force the fundamental
Fourier mode of collision debris to vanish, all higher radiation
modes remain unconstrained.  Recent high-resolution numerical
studies~\cite{garralon2023compacton,garralon2024numerical} have
independently confirmed and extended this quasi-elastic picture,
revealing additional structure, including odd-even asymmetry in
collision robustness, that the analytical framework accommodates
but does not yet fully explain.  The analysis generates testable
predictions, including a $k = 1$ notch in the debris spectrum
and a spectral peak shift with increasing~$m$, both of which are
confirmed numerically across $m = 2, 3, 4, 5$
(Figure~\ref{fig:debris-spectrum}).  Hamiltonian
geometry provides variational principles and stability theory
despite the absence of a bi-Hamiltonian pair.

The shelf-wave analysis of \S\ref{sec:shelf-waves} adds
a further dimension: traveling waves on a positive background
$u = a > 0$ organize into a phase-plane classification governed by
the critical speed $c^* = m\,a^{m-1}$, with smooth solitary waves
for $c > c^*$ and one-parameter cnoidal families for $c < c^*$.
The compacton emerges as a singular limit $a \to 0^+$ in which
two structures collapse simultaneously:
regularity drops ($C^\infty$ to the compacton's boundary class),
and the support
compactifies.  This double singularity has
no analogue in Lax's integrable periodic theory.  The
finite conservation hierarchy limits the periodic waves to
one-parameter families on energy curves, a
stark reduction from the $N$-dimensional tori of the
KdV theory~\cite{lax1975periodic,lax1976almost}.

This is not residual integrability; it is a different organizing
principle for the K(m,m) family, one in which scaling universality and finite conservation
structure replace the spectral machinery of the IST.

\paragraph{Implications for future research.}
The constraint-deficit framework yields concrete predictions, two of
which have been confirmed numerically.
The predicted spectral notch at wavenumber $k = 1$ in collision
debris and the systematic peak shift with increasing~$m$ are both
verified across all tested values
(Figure~\ref{fig:debris-spectrum}, Table~\ref{tab:multi-m-results}).
The BBM collision experiments
(\S\ref{subsec:landscape}) found BBM collisions about as elastic as
K(m,m) collisions by the amplitude measure, with more debris energy,
leaving open whether interaction geometry modulates the role of
conservation count (Conjecture~\ref{conj:monotonicity}).  Beyond K(m,m),
the methods employed here (pattern recognition across
parameter families, conservation counting, constraint
analysis of collision outcomes, and phase-plane uniqueness
arguments) could in principle apply to other systems with
finite conservation structure, though each case would require
independent verification.

Most physically relevant nonlinear dispersive systems are not
completely integrable.  Whether the K(m,m) family is representative
of a broader class remains an open question; the territory between integrable order and generic chaos
may be richer than the integrable/non-integrable dichotomy
suggests, but this paper establishes structure only for K(m,m).
The BBM collision study reported here provided one such test;
extending these
comparisons to additional equation families with known
conservation counts, and developing the stability theory for
singular compacton profiles, are natural next steps that would
clarify the scope of these ideas.  The eight problems posed in
Appendix~\ref{app:open-problems}
outline a research program aimed at understanding mathematical
structure beyond the integrable paradigm.

Lax showed what structure looks like when integrability succeeds.
The K(m,m) equations show what structure looks like when it fails.
Between these poles lies a vast landscape of nonlinear dispersive
systems, most of which are not completely integrable, where
the question is not whether structure exists but what form it
takes.  The constraint-deficit framework, the conservation-elasticity
landscape, and the singular limit analysis developed here offer
partial answers for one family of equations.  Extending these
ideas to the broader territory remains the central challenge.

Peter Lax approached mathematical problems with a conviction that
the right viewpoint would reveal hidden order, and that computation
and theory were partners in this search.  The work presented here,
which combines numerical experiment with analytical proof to uncover
organizing principles in a non-integrable system, is offered in the
same spirit.  The search for structure in nonlinear phenomena,
wherever it may be found, continues the tradition that Lax
established.

\appendix

\section{Trigonometric orthogonality of K(m,m) compactons}
\label{app:jacobian}

This appendix provides the detailed calculation underlying
Proposition~\ref{prop:trig-vanishing} and the constraint partition
in Theorem~\ref{thm:constraint-deficit}.

\subsection*{A.1\quad The coefficient $B_\alpha$}

For a K(m,m) compacton $U(x) = A\cos^\alpha\!\bigl((x - x_0)/\ell\bigr)$
supported on $|x - x_0| \leq \pi\ell/2$, the trigonometric
conservation integral $I_2 = \int U(x)\sin(x)\,dx$ is evaluated by
the substitution $\theta = (x - x_0)/\ell$, giving
\begin{equation}\label{eq:I2-expansion}
I_2 = A\ell\bigl[\sin(x_0)\,B_\alpha + \cos(x_0)\,S_\alpha\bigr],
\end{equation}
where
\begin{align}
B_\alpha &= \int_{-\pi/2}^{\pi/2}
  \cos^\alpha(\theta)\cos(\ell\theta)\,d\theta,
  \label{eq:B-alpha-def} \\
S_\alpha &= \int_{-\pi/2}^{\pi/2}
  \cos^\alpha(\theta)\sin(\ell\theta)\,d\theta.
  \label{eq:S-alpha-def}
\end{align}
By parity, $S_\alpha$ vanishes: the integrand
$\cos^\alpha(\theta)\sin(\ell\theta)$ is odd on the symmetric
domain $[-\pi/2, \pi/2]$ for every $\alpha > 0$ and every
$\ell > 0$.

\subsection*{A.2\quad Proof that $B_\alpha = 0$}

Central to the proof is the identity linking the K(m,m) compacton
parameters:
\begin{equation}\label{eq:ell-alpha-identity}
\ell - \alpha = 2 \qquad \text{for all } m > 1,
\end{equation}
which follows directly from $\alpha = 2/(m-1)$ and
$\ell = 2m/(m-1)$: their difference is
$\ell - \alpha = 2m/(m-1) - 2/(m-1) = 2(m-1)/(m-1) = 2$.

The Fourier cosine transform of $\cos^\alpha(t)$ on
$[-\pi/2, \pi/2]$ is given by~\cite{olver1993applications}
\begin{equation}\label{eq:fourier-cosine-formula}
\int_0^{\pi/2} \cos^a(t)\cos(nt)\,dt
= \frac{\pi\,\Gamma(a+1)}{2^{a+1}\,
  \Gamma\!\bigl(\tfrac{a+n+2}{2}\bigr)\,
  \Gamma\!\bigl(\tfrac{a-n+2}{2}\bigr)},
\end{equation}
valid for $a > -1$.  Setting $a = \alpha$ and $n = \ell = \alpha + 2$,
the second Gamma factor becomes
\[
\Gamma\!\left(\frac{\alpha - (\alpha + 2) + 2}{2}\right)
= \Gamma(0).
\]
Since the Gamma function has a simple pole at zero,
$1/\Gamma(0) = 0$.  Accordingly, the right-hand side
of~\eqref{eq:fourier-cosine-formula} therefore vanishes, giving
$B_\alpha = 0$ for every $\alpha > 0$.

Since both $S_\alpha$ and $B_\alpha$ vanish,
equation~\eqref{eq:I2-expansion} gives $I_2 = 0$ for every
compacton position $x_0$ and amplitude~$A$.  The identical argument
with $\cos(x)$ replacing $\sin(x)$ shows $I_3 = 0$ as well.

\subsection*{A.3\quad Verification for small $m$}

For $m = 2$ ($\alpha = 2$, $\ell = 4$), the integrand
$\cos^2(\theta)\cos(4\theta)$ can be expanded using
$\cos^2\theta = (1 + \cos 2\theta)/2$ and the product-to-sum
formula:
\[
\cos^2\theta\,\cos 4\theta
= \tfrac{1}{2}\cos 4\theta
  + \tfrac{1}{4}\cos 2\theta + \tfrac{1}{4}\cos 6\theta.
\]
Each term $\cos(n\theta)$ integrates to
$2\sin(n\pi/2)/n$ on $[-\pi/2, \pi/2]$, which vanishes for
$n = 2, 4, 6$.  Hence $B_2 = 0$ by direct computation.

For $m = 3$ ($\alpha = 1$, $\ell = 3$):
$\cos\theta\,\cos 3\theta = \frac{1}{2}(\cos 2\theta + \cos 4\theta)$,
and both terms integrate to zero on $[-\pi/2, \pi/2]$.

\subsection*{A.4\quad Constraint partition analysis}

Because $B_\alpha$ vanishes, the conservation law
constraint structure transforms as follows.  For a post-collision state
consisting of two compactons with amplitudes $A'_1, A'_2$,
phase shifts $\delta_1, \delta_2$, and debris~$w$:

\emph{Mass and energy} ($I_0$ and $I_1$) depend on the compacton
amplitudes alone: $I_0 = C_m(A'_1 + A'_2) + M_w$ and
$I_1 = D_m(A'^{\,m+1}_1 + A'^{\,m+1}_2) + E_w$.  Conservation gives
two equations that, in the small-debris regime, determine
$(A'_1, A'_2) \approx (A_1, A_2)$.

\emph{Trigonometric laws} ($I_2$ and $I_3$) receive zero
contribution from the compactons at any position.  Conservation
yields $I_2[w] = 0$ and $I_3[w] = 0$, constraining the
$k = 1$ Fourier mode of the debris to vanish.

\emph{Nonlocal momentum} ($I_4$) decomposes as
$I_4 = I_4^{\mathrm{self}}(A'_1) + I_4^{\mathrm{self}}(A'_2)
+ I_4^{\mathrm{cross}}(\Delta) + I_4^{\mathrm{debris}}$,
where $I_4^{\mathrm{self}}$ depends on amplitude alone (by
translation invariance of the self-interaction kernel),
$I_4^{\mathrm{cross}}$ depends on the inter-compacton
separation~$\Delta$, and $I_4^{\mathrm{debris}}$ involves the
debris field.  For asymptotically separated post-collision
compactons, $I_4^{\mathrm{cross}} = 0$ (the supports are disjoint and $\hat{u}(\pm 1) = 0$ for each compacton), so
$I_4$ conservation reduces to a constraint coupling
$I_4^{\mathrm{self}}(A'_1) + I_4^{\mathrm{self}}(A'_2)$ with
$I_4^{\mathrm{debris}}$.  Since $I_4^{\mathrm{self}} \propto
A^2$ (the kernel integral is bilinear), amplitude preservation
$A'_i = A_i$ automatically satisfies the compacton part, and
the constraint passes to the debris.

In summary, the five conservation laws constrain the emergent
amplitudes (given the debris contribution) and suppress a single
Fourier mode of the debris.  The
infinitely many remaining debris degrees of freedom (all Fourier
modes with $k \neq 1$) are governed by the nonlinear collision
dynamics alone.  This stands in contrast to integrable systems,
where the $n$-th conservation law in the infinite hierarchy
constrains the $n$-th spectral degree of freedom, leaving no
unconstrained radiation modes.

\section{Structural extensions}
\label{app:extensions}

\subsection*{B.1\quad The cosine-power ansatz and K(m,m) exclusivity}

Theorem~\ref{thm:universal} establishes universal formulas for K(m,m)
compactons via the cosine-power ansatz
$U(\xi) = A\cos^\alpha(\xi/\ell)$.  A natural question is whether
this ansatz extends to the broader K(m,n) family with $m \neq n$.

\begin{proposition}[Cosine-power exclusivity]\label{prop:exclusivity}
Within the K(m,n) equation family with $m, n > 1$, the cosine-power
ansatz~\eqref{eq:compacton-ansatz} produces valid traveling wave
solutions if and only if $m = n$.
\end{proposition}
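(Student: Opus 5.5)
Both directions come from the once-integrated traveling-wave ODE for K(m,n),
\[
-cU + U^m + (U^n)'' = 0,
\]
with the integration constant set to zero by the same boundary argument as in the proof of Theorem~\ref{thm:universal}. The ``if'' direction is exactly Theorem~\ref{thm:universal}, so the work is the ``only if'' direction. Substitute $U = A\cos^\alpha\theta$ with $\theta = \xi/\ell$ and $A,\ell,c,\alpha>0$. Put $\gamma = n\alpha$ and compute $(U^n)''$ as in~\eqref{eq:Um-second}. The ODE on $(-\pi/2,\pi/2)$ becomes
\[
-cA\cos^{\alpha}\theta + A^m\cos^{m\alpha}\theta
+ \frac{A^n}{\ell^2}\Bigl[\gamma(\gamma-1)\cos^{\gamma-2}\theta - \gamma^2\cos^{\gamma}\theta\Bigr] = 0 .
\]
Functions $\cos^{p}\theta$ with distinct real exponents $p$ are linearly independent on $(-\pi/2,\pi/2)$. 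To see this, set $s = \cos\theta \in (0,1]$; a finite sum $\sum_j b_j s^{p_j}$ that vanishes on an interval must have all $b_j = 0$, for instance by looking at the smallest exponent as $s\to 0^+$, or by the standard Wronskian argument. The condition is therefore that, after grouping equal exponents, each coefficient vanishes.

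The key step is a case analysis on the four exponents $\alpha$, $m\alpha$, $n\alpha-2$, $n\alpha$. Since $m>1$ and $\alpha>0$, we have $m\alpha>\alpha$ and $n\alpha>n\alpha-2$. The term $-cA\cos^\alpha\theta$ has nonzero coefficient, so $\alpha$ must coincide with $m\alpha$, $n\alpha-2$, or $n\alpha$.
\begin{itemize}
\item $\alpha = m\alpha$ is excluded because $m>1$.
\item $\alpha = n\alpha$ is excluded because $n>1$.
\item Hence $\alpha = n\alpha - 2$, which gives $\alpha = 2/(n-1)$ and $\gamma-2 = \alpha$.
\end{itemize}
Next, the term $A^m\cos^{m\alpha}\theta$ also has nonzero coefficient and must cancel against something. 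Its only candidates are $\cos^{\alpha}$, already ruled out since $m\alpha \ne \alpha$, and $\cos^{\gamma}$, which requires $m\alpha = n\alpha$, i.e.\ $m = n$.

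One subcase needs care: could several exponents merge so that cancellation happens differently? If $m\alpha = \gamma - 2 = \alpha$, that is again impossible. The only remaining possibility is to keep the $\cos^\gamma$ term unmatched, which forces its coefficient $\gamma^2 A^n/\ell^2$ to vanish, impossible because $\gamma>0$. Handling these coincidences systematically is the only real obstacle. I would tabulate all partitions of the exponent multiset and check that each one either violates $m,n>1$ or forces a strictly positive coefficient to vanish.

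For completeness I would also remark on the two ways the statement could be read. If ``valid traveling wave'' includes compact support with the flux condition at $\theta=\pm\pi/2$, nothing changes, since the obstruction is already interior. If instead one allows a nonzero integration constant $C_0$, it adds a $\cos^0$ term. That term can be absorbed only if some exponent equals $0$, and all exponents are positive except $n\alpha-2$, which equals $\alpha>0$. So $C_0 = 0$ is forced and the conclusion $m=n$ stands. With $m=n$ established, the remaining coefficient equations reproduce~\eqref{eq:alpha}--\eqref{eq:speed}, closing the equivalence.
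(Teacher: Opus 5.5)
Your proposal is correct and follows the paper's own argument. You match $\cos^{\alpha}\theta$ with the dispersive $\cos^{\gamma-2}\theta$ term to get $\alpha = 2/(n-1)$, and then note that the convection power $\cos^{m\alpha}\theta$ can cancel only against $\cos^{n\alpha}\theta$, which forces $m=n$. Your additions fill in steps the paper leaves implicit without changing the route: the $s=\cos\theta\to 0^+$ proof that distinct cosine powers are linearly independent, the explicit exclusion of $\alpha=m\alpha$ and $\alpha=n\alpha$, and the check that a nonzero integration constant cannot be absorbed.
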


\begin{proof}
The K(m,n) traveling wave ODE, after one integration, reads
\begin{equation}\label{eq:tw-ode-mn}
-cU + U^m + (U^n)'' = 0.
\end{equation}
Substituting $U = A\cos^\alpha\theta$ with $\theta = \xi/\ell$ and
setting $\gamma = n\alpha$ yields three types of terms:
\[
-cA\cos^\alpha\theta + A^m\cos^{m\alpha}\theta
+ \frac{A^n}{\ell^2}\bigl[\gamma(\gamma-1)\cos^{\gamma-2}\theta
- \gamma^2\cos^\gamma\theta\bigr] = 0.
\]
Power matching between the translation term ($\cos^\alpha\theta$) and
the dispersive singular term ($\cos^{\gamma-2}\theta$) forces
$\alpha = \gamma - 2 = n\alpha - 2$, giving $\alpha = 2/(n-1)$.

Now the convection term $U^m = A^m\cos^{m\alpha}\theta$ introduces a
power $\cos^{m\alpha}\theta$.  For the equation to hold identically,
this power must coincide with one of the two already present:
either $m\alpha = \alpha$ (giving $m = 1$, excluded by hypothesis) or
$m\alpha = n\alpha$ (giving $m = n$).

When $m \neq n$ (and $m > 1$), the power $\cos^{m\alpha}\theta$ is
linearly independent of both $\cos^\alpha\theta$ and
$\cos^{n\alpha}\theta$.  Its coefficient $A^m$ cannot vanish for
$A > 0$, so the equation has no solution of cosine-power form.
\end{proof}

This proposition explains why the K(m,m) subfamily occupies a
distinguished position within the K(m,n) family.  The cosine-power
profile is the simplest compact traveling wave ansatz, and its
restriction to $m = n$ reflects the requirement that the convection
and dispersion nonlinearities share the same power structure.  For
$m \neq n$, the mismatch between powers forces the traveling wave
profile into elliptic-function or more general transcendental form.

The Rosenau--Zilburg survey~\cite{rosenau2018compactons} presents the
broader $\mathcal{C}_N(m,a,b)$ framework (equations~(2.35b)--(2.35d)),
which identifies additional cosine-power families when the dispersive
term has the generalized structure
$\frac{1}{b}[u^a(u^b)_{xx}]_x$ with $a \neq 0$.  The K(m,n)
equations correspond to $a = 0$, $b = n$, the most constrained case.
Proposition~\ref{prop:exclusivity} characterizes the
cosine-power solutions within this subfamily completely.

\subsection*{B.2\quad Phase-plane uniqueness within the symmetric class}

The standing hypothesis~(H2) in \S\ref{subsec:quasi-elastic}
assumes that post-collision compactons take the cosine-power form
of Theorem~\ref{thm:universal}.  The following proposition provides
partial justification.

\begin{proposition}[Profile uniqueness]\label{prop:uniqueness}
For $m > 1$ and speed $c > 0$, the K(m,m) traveling wave ODE
\begin{equation}\tag{\ref{eq:tw-ode}}
-cU + U^m + (U^m)'' = 0
\end{equation}
admits a unique positive, symmetric, single-hump solution with
compact support, namely the cosine-power compacton of
Theorem~\ref{thm:universal} with amplitude
$A = (2mc/(m+1))^{1/(m-1)}$.
\end{proposition}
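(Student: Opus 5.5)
The plan is to reduce the ODE to a first-order conservative system in the variable $v = U^m$, exactly as in \S\ref{subsec:shelf-reduction} with $a = 0$. On the open support, where $U > 0$, the substitution $v = U^m$ is smooth, and equation~\eqref{eq:tw-ode} becomes
\[
v'' + v - c\,v^{1/m} = 0 .
\]
This is the case $a=0$ of~\eqref{eq:v-ode}, with potential
\[
V_0(v) = \tfrac12 v^2 - \tfrac{cm}{m+1}\,v^{(m+1)/m}.
\]
Multiplying by $v'$ gives the first integral $\tfrac12 (v')^2 + V_0(v) = E$.

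The first step is to fix $E$ from the boundary behaviour. A compactly supported solution reaches $U=0$ at the edge $\xi_0$, and as in the proof of Theorem~\ref{thm:universal} the flux condition $(U^m)'(\xi_0^-) = v'(\xi_0^-) = 0$ holds there, matching the exterior $U\equiv 0$. Since $V_0(0)=0$, this forces $E = 0$.

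With $E=0$ we have $(v')^2 = -2V_0(v)$. Positivity of $U$ inside the support requires $V_0(v) < 0$ for $v \in (0, v_{\max})$. The turning point $v_{\max}$ is the unique positive root of $V_0$, namely
\[
v_{\max}^{(m-1)/m} = \frac{2cm}{m+1}.
\]
Uniqueness of this root follows because $V_0(v)/v^{(m+1)/m}$ is strictly increasing.

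Next I use the single-hump and symmetry hypotheses. A single-hump even solution has exactly one interior critical point, at $\xi=0$, where $v=v_{\max}$, and $v$ is strictly monotone on each side. So on $(0,\xi_0)$ we have
\[
v' = -\sqrt{-2V_0(v)} .
\]
This separable equation gives $\xi$ as a strictly monotone function of $v$,
\[
\xi = \int_v^{v_{\max}} \frac{ds}{\sqrt{-2V_0(s)}} ,
\]
so the profile is determined uniquely by $c$. Here I should note that the right side is only H\"older, not Lipschitz, at both $v_{\max}$ and $0$, so ODE uniqueness cannot be quoted directly. This is why the argument goes through the quadrature and the monotonicity hypothesis rather than Picard--Lindel\"of. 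Precisely here the single-hump assumption excludes spurious solutions, such as ones that linger at the maximum or splice several humps together.

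The integral converges at $s=0$ because $-2V_0(s) \sim \tfrac{2cm}{m+1} s^{(m+1)/m}$ and $(m+1)/(2m) < 1$. Hence the support is finite, with half-width
\[
\xi_0 = \int_0^{v_{\max}} \frac{ds}{\sqrt{-2V_0(s)}} .
\]
Theorem~\ref{thm:universal} already exhibits an explicit cosine-power solution with $E=0$ and the same maximum. That solution must therefore coincide with the one given by the quadrature, and its amplitude is
\[
A = v_{\max}^{1/m} = \left(\frac{2mc}{m+1}\right)^{1/(m-1)},
\]
which is the inversion of~\eqref{eq:speed}. As a consistency check, $\xi_0 = \pi\ell/2$.

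The main obstacle is justifying $E=0$ rigorously for $m>3$, where $U'$ blows up at the edge. I would argue that $(v')^2 = 2(E - V_0(v))$ has a limit as $\xi\to\xi_0^-$, since $v \to 0$ continuously. If that limit were nonzero, $v'$ would jump at $\xi_0$, and then $(U^m)''$ would carry a delta mass at $\xi_0$. This contradicts the once-integrated weak formulation, i.e.\ \eqref{eq:weak-formulation} restricted to traveling waves. I would also check that the exterior zero state forces the integration constant of~\eqref{eq:tw-ode} to vanish. Finally, $E<0$ is impossible: then $v$ could not reach $0$, since the root of $E = V_0$ on $(0,v_{\max})$ would be a turning point at which $v$ stays positive.
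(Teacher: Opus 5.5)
Your proposal is correct and follows essentially the paper's route: the energy integral for $W=U^m$ with potential $\Phi(W)=\tfrac12W^2-\tfrac{cm}{m+1}W^{(m+1)/m}$, the choice $E=0$ forced at the support edge, the unique positive turning point $A^m$, and integration along the zero level set. Your justification of $E=0$ is more precise than the paper's wording: continuity of $(U^m)'$ across the edge follows from the distributional once-integrated equation, which excludes $E>0$ orbits (they reach $W=0$ with nonzero slope) and $E<0$ orbits (they never reach $W=0$). One remark is over-cautious: the non-Lipschitz difficulty arises only at $v=0$, since at $v_{\max}>0$ the second-order ODE is smooth and $v''=-V_0'(v_{\max})<0$ already prevents lingering at the crest.
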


\begin{proof}
The ODE~\eqref{eq:tw-ode} can be written as a first-order system
in the phase plane $(U, V)$ where $V = (U^m)'$.  Setting
$W = U^m$ and using $W'' = -W + cU = -W + cW^{1/m}$ yields the
energy integral
\begin{equation}\label{eq:energy-integral}
\frac{1}{2}(W')^2 + \Phi(W) = E,
\end{equation}
where the potential is
\[
\Phi(W) = \frac{1}{2}W^2 - \frac{cm}{m+1}W^{(m+1)/m}.
\]
Since $W = U^m \geq 0$ and $U$ is positive, we analyze the dynamics
of~\eqref{eq:energy-integral} in $W \geq 0$.

At $W = 0$, the potential $\Phi$ has a degenerate critical point
(with $\Phi(0) = 0$ and $\Phi'(0) = 0$).  Since $\Phi$ decreases
for small $W > 0$ and grows as $W^2$ for large $W$, the level set
$\Phi(W) = 0$ has a unique positive root at
$W_{\mathrm{turn}} = [2mc/(m+1)]^{m/(m-1)} = A^m$, the compacton peak.
For the energy level $E = \Phi(0) = 0$, the trajectory connects
$W = 0$ to $W_{\mathrm{turn}}$ and back, forming a homoclinic orbit in
the half-plane $W \geq 0$.

Within the class of positive, symmetric, single-hump solutions:
(i) symmetry forces $W'(0) = 0$, so the trajectory reaches
$W_{\mathrm{turn}}$ at $\xi = 0$;
(ii) single-hump means the trajectory does not re-enter the
region $W' > 0$ after returning to $W' < 0$;
(iii) positivity means $W > 0$ throughout the interior.

Among all energy levels, $E = 0$ is the unique choice satisfying all three
conditions: any $E > 0$ prevents the trajectory from reaching
$W = 0$ (the orbit becomes periodic), while $E < 0$ is
incompatible with $W \geq 0$ near the origin.
On the $E = 0$ level set, the trajectory is determined uniquely by
the energy integral, giving
$W'(\xi) = \pm\sqrt{-2\Phi(W(\xi))}$
with the sign determined by the direction of traversal.
Integration yields the cosine-power profile.

Compact support arises from the degeneracy of
$\Phi$ at $W = 0$: near $W = 0$ the potential behaves like
$-\frac{cm}{m+1}W^{(m+1)/m}$, which vanishes more slowly than
quadratically because $(m+1)/m < 2$, so the trajectory reaches
$W = 0$ in finite $\xi$.  This is in contrast to the KdV case, where
the potential vanishes quadratically and the soliton tail decays
exponentially but never reaches zero.
\end{proof}

\begin{remark}
Proposition~\ref{prop:uniqueness} addresses uniqueness within the
class of positive, symmetric, single-hump profiles.  The question
of whether asymmetric or multi-hump compact traveling waves exist
remains open; the phase-plane analysis shows that no such solutions
arise on the $E = 0$ energy level, but solutions at $E \neq 0$
or with different topology are not excluded.  Additionally, for
$m \geq 4$, the compacton profile has $\alpha < 1$, so $U'$
diverges at the support boundary.  The phase-plane analysis
operates in the $W = U^m$ variable, where the boundary behavior
is $W \sim (\xi_0 - \xi)^{2m/(m-1)}$ near a support edge~$\xi_0$,
and all terms remain bounded.  Passage back to
$U = W^{1/m}$ introduces the fractional-power singularity at the
edges.
\end{remark}

\section{Symbolic verification of Section~7 identities}
\label{app:symbolic}

All formulas in Section~\ref{sec:shelf-waves} were verified
independently using the SymPy computer algebra system
(version 1.13).  Table~\ref{tab:verification} summarizes the
key identities checked.  Each entry was verified symbolically
in the parameters $a$, $c$, and $m$ (or numerically at multiple
parameter values where symbolic simplification was impractical).
The factored form~\eqref{eq:factored-form} and decay rate
identity~\eqref{eq:decay-rate} were additionally confirmed by
numerical integration of the traveling wave ODE at the parameter
values $(a, c) \in \{(0.01, 4),\, (0.1, 4),\, (0.5, 4),\,
(1.0, 4)\}$, with agreement to at least ten significant digits.

\begin{table}[ht]
\centering
\footnotesize
\caption{Symbolic verification of Section~7 identities (SymPy).}
\label{tab:verification}
\begin{tabular}{@{} l l l @{}}
\toprule
\textbf{Identity} & \textbf{Equation} & \textbf{Method} \\
\midrule
$V'(a^m) = 0$ (equilibrium) &
  \eqref{eq:V-k22} & symbolic \\
$V''(a^2) = 1 - c/(2a)$ (saddle criterion) &
  \eqref{eq:V-k22} & symbolic \\
$V'((c-a)^2) = 0$ (center) &
  \S\ref{sec:shelf-k22} & symbolic \\
First integral $D = ca^3/6 - a^4/4$ &
  \eqref{eq:first-integral} & symbolic \\
Factored form double-root at $w = a$ &
  \eqref{eq:factored-form} & symbolic \\
Roots $w_\pm$ of reduced quadratic &
  \eqref{eq:wpm} & symbolic \\
$(w_+ - a)(a - w_-) = 2a(c - 2a)$ &
  \eqref{eq:decay-rate} & symbolic \\
$w_+ \to 4c/3$ as $a \to 0^+$ &
  \eqref{eq:amplitude-speed} & limit \\
$f_{\max} \sim \tfrac{3}{2}(c - 2a)$ near onset &
  \eqref{eq:amplitude-speed} & series \\
Chicone numerator $N \ge 0$, $R_m>0$ on annulus &
  Prop.~\ref{prop:period-monotone} & symbolic $m{\le}3$; numerical $m{\ge}4$ \\
\bottomrule
\end{tabular}
\end{table}

With $a = 1$ (the general case follows from $U \to U/a$,
$c \to c/a^{m-1}$, up to a positive power of~$a$), the residual
polynomials for $m = 4, 5$ are
\begin{align*}
R_4(U) &= 21 U^{8} + 84 U^{7} + 210 U^{6} + 420 U^{5} - 114 U^{5} c \\
 &\qquad + 420 U^{4} - 141 U^{4} c + 300 U^{3} - 120 U^{3} c + 150 U^{2} \\
 &\qquad - 90 U^{2} c + 12 U^{2} c^{2} + 60 U - 54 U c + 12 U c^{2} \\
 &\qquad + 15 - 21 c + 6 c^{2},\qquad \kappa_4 = 80,\\
R_5(U) &= 18 U^{11} + 72 U^{10} + 180 U^{9} + 360 U^{8} + 630 U^{7} \\
 &\qquad - 124 U^{7} c + 684 U^{6} - 172 U^{6} c + 591 U^{5} - 169 U^{5} c \\
 &\qquad + 420 U^{4} - 140 U^{4} c + 240 U^{3} - 110 U^{3} c + 10 U^{3} c^{2} \\
 &\qquad + 120 U^{2} - 76 U^{2} c + 12 U^{2} c^{2} + 48 U - 43 U c \\
 &\qquad + 9 U c^{2} + 12 - 16 c + 4 c^{2},\qquad \kappa_5 = 75.
\end{align*}
Here $R_4(0) = 3 \left(c - 1\right) \left(2 c - 5\right)$ and $R_5(0) = 4 \left(c - 3\right) \left(c - 1\right)$
change sign inside $(a^{m-1}, c^*)$, so, exactly as in the $m = 2$
argument, positivity on the annulus in that regime would rest, as for
$m = 2, 3$, on the inner edge being the saddle $U_s > 0$, where
$R_m(U_s) > 0$, together with monotonicity of $R_m$ across the
annulus; the latter is observed numerically but not proved here.
For $c \le a^{m-1}$ the annulus reaches $U = 0$ and $R_m(0) \ge 0$.

The numerical profiles shown in
Figure~\ref{fig:singular-limit}
were generated by adaptive Runge--Kutta integration
(SciPy \texttt{solve\_ivp}, tolerances $10^{-12}$) of the
$v$-variable Hamiltonian system with initial conditions on the
separatrix manifold.  The period--energy data in
Supplement Figure~S2 were computed from
200-point Gauss--Legendre quadrature of~\eqref{eq:period-quadrature}.
The data files for both figures are in the code repository.

\section{Numerical method}
\label{app:numerical-method}

This appendix describes the spectrally-filtered pseudospectral method
(PSM) used for the collision simulations in
Section~\ref{subsec:quasi-elastic} and for all other $K(m,m)$
computations throughout this paper.  The method follows the
framework developed in~\cite{hyman2026filters}, where systematic
comparisons across filter variants, dissipation operators, and
parameter choices establish the production configuration
summarized below.

\subsubsection*{The filtered equation}

We solve the spectrally filtered K$(m,m)$ equation in conservative form,
\begin{equation}
\label{eq:regularized-kmn}
u_t + \mathcal{F}_{\mathrm{LP}}\bigl[(u^m)_x + (u^m)_{xxx}\bigr]
= \delta \cdot c_{\max}(t) \cdot \Delta x^3 \cdot \mathcal{D}[u],
\end{equation}
where the low-pass exponential filter $\mathcal{F}_{\mathrm{LP}}$ is
applied jointly to both the advective and dispersive flux terms
(Variant~C of~\cite{hyman2026filters}).  This joint filtering
eliminates two failure modes: filtering only the advective flux
(Variant~A) permits spurious growth through the unfiltered
dispersive term, while filtering only the dispersive flux
(Variant~B) is catastrophically unstable for $m \geq 3$.
Joint filtering avoids both at the cost of a single additional
FFT per time step.

The low-pass filter transfer function is
\[
\sigma_{\mathrm{LP}}(k) = \exp\!\bigl(-\beta\,(|k|/k_c)^{2p}\bigr),
\qquad k_c = k_{\max}/q,
\]
with $p = 8$, fixed cutoff fraction $q = 2$, and $\beta$ chosen
so that $\sigma_{\mathrm{LP}}(k_{\max}) = \epsilon_{\mathrm{mach}}$.
The fixed cutoff $q = 2$ retains modes up to half the Nyquist
frequency for all~$m$; an adaptive cutoff $q = m$ was tested
in~\cite{hyman2026filters} and found to starve the interior of
resolution for $m \geq 3$.

\subsubsection*{Dissipation: biharmonic high-pass with spatial ramp}

The dissipation operator $\mathcal{D}$ acts through a biharmonic
($k^4$) high-pass filter with a smooth spatial ramp that
concentrates the dissipation near the compacton boundary,
\[
\mathcal{D}[u] = r(x,t)\,\mathcal{F}^{-1}\!\bigl[-k^4\,
W_{\mathrm{HP}}(k)\,\hat{u}(k)\bigr],
\]
where $W_{\mathrm{HP}}(k)$ is a linear ramp from $0$ at $k_c$
to $1$ at $k_{\max}$, and $r(x,t) = \min(1,\,u_{\mathrm{ramp}}/|u(x,t)|)$
with $u_{\mathrm{ramp}} = 0.05\,\max_j u_j$.  The ramp
suppresses dissipation in the compacton interior (where~$r \to 0$)
and activates it at the support boundary (where~$r \to 1$),
reducing the peak shape error by approximately $47\%$ relative
to a sharp-step mask~\cite{hyman2026filters}.

The amplitude scaling uses the instantaneous maximum wave speed
\begin{equation}
\label{eq:cmax-scaling}
c_{\max}(t) = \frac{m+1}{2m}\,\max_j |u_j|^{m-1},
\end{equation}
which removes the hidden $m$-dependence present in the simpler
scaling $\max|u|^{m-1}$.  A boundary-layer analysis at the
compacton edge~\cite{hyman2026filters} predicts the universal range
$\delta \in [0.133, 0.167]$; empirical line-search across
$m \in \{2,3,4,5\}$ confirms that the single value $\delta = 0.15$
lies within the $1\%$-insensitivity band for PSM at all~$m$,
requiring no per-equation calibration.

\subsubsection*{Spatial discretization}

Spatial derivatives are computed pseudospectrally: given
wavenumbers $k_j = 2\pi j/L$, the spectral derivative
$\widehat{(\partial_x u)}_j = ik_j\hat{u}_j$ provides
machine-precision accuracy for all resolved modes.  Third
derivatives are obtained by three applications of the spectral
derivative.  The method is implemented on a uniform periodic
grid with $N_x$ equally spaced points and spacing $\Delta x = L/N_x$.

\subsubsection*{Time integration and grid parameters}

We use classical RK4 with fixed timestep satisfying the CFL bound
\begin{equation}
\label{eq:dt-bound}
\Delta t \leq C \cdot \frac{\Delta x^3}{m},
\end{equation}
with safety constant $C \approx 0.8 \times 2.83/4$.
All collision simulations use $N_x = 512$ grid points on the periodic
domain $[0, L]$, exponential LP filter with $q = 2$, biharmonic
ramp dissipation with $\delta = 0.15$, and fixed-step RK4 integration.
Mass $I_0$ is conserved to machine precision; energy $I_1$
is conserved to relative error at most $7.0 \times 10^{-6}$
(Supplement, Table~S2).
Convergence analysis in~\cite{hyman2026filters} confirms rates
consistent with the regularity hierarchy: near-second order for
$m \leq 3$ and regularity-limited floors of order $2/m$ for
$m \geq 4$.

\section{Weak solution framework}
\label{app:weak-solutions}

This appendix provides the detailed argument for extending K(m,m)
conservation laws to weak solutions, supporting the framework
described in \S\ref{subsec:regularity}.

For $m > 3$, the compacton profile has $\alpha < 1$ and the
classical derivative $U'$ diverges at the support boundary.
The weak formulation~\eqref{eq:weak-formulation} transfers all
derivatives onto smooth test functions, removing the regularity
requirement on~$u$.

All five conservation laws extend to this weak setting.  The
conserved integrals remain well-defined and time-independent for
solutions satisfying~\eqref{eq:weak-formulation}, including the
fractional-exponent cases where classical derivatives fail.
This follows because each conserved density involves at most $u^{m+1}$
and each flux can be reduced, via integration by parts, to
integrals of $u^m$ and its weak derivatives, all of which are
well-defined for $u \in L^{m+1} \cap H^{-1}$.

Compacton collisions for $m \geq 4$ involve interactions between
solutions whose derivatives diverge at their support boundaries.
The conservation-elasticity framework of
Theorems~\ref{thm:constraint-deficit}
and~\ref{thm:debris-bound} applies uniformly across the family
because the five conservation laws hold in the weak sense
for all $m \geq 2$.

Anco and Gandarias~\cite{anco2024weak} subsequently developed a
rigorous distributional framework establishing that weak compacton
solutions satisfy the associated continuity equations in a
well-defined distributional sense, even at the support boundary
where classical derivatives may not exist.

\section{Conservation law derivations}
\label{app:conservation-detail}

This appendix provides detailed proofs and evaluations for the
five K(m,m) conservation laws stated in
\S\ref{subsec:conservation-laws}.

\subsection*{F.1\quad Verification of trigonometric conservation}

To verify conservation of~$I_2$, note that
\[
\frac{dI_2}{dt}
= -\int \bigl[(u^m)_x + (u^m)_{xxx}\bigr] \sin(x) \, dx.
\]
Integrating the convection term by parts once gives
$\int u^m \cos(x) \, dx$.  Integrating the dispersion term by
parts three times gives $-\int u^m \cos(x) \, dx$ (each
integration alternates the trigonometric function, and the third
produces the required sign reversal).  The two contributions
cancel exactly, giving $dI_2/dt = 0$.  An identical argument with
$\cos(x)$ replacing $\sin(x)$ establishes $dI_3/dt = 0$.

This cancellation is specific to wavenumber $k = 1$.  For test
functions $\sin(kx)$ and $\cos(kx)$ with $k \neq 1$, the
convection and dispersion terms produce $k$ and $k^3$ respectively,
which fail to cancel.

\subsection*{F.2\quad Conservation of nonlocal momentum}

Conservation of the nonlocal momentum~$I_4$ follows from the
factorization of the Hamiltonian operator.
Define $v = (D_x^2 + 1)^{-1}[u]$, so that $v + v_{xx} = u$ and
consequently $v_x + v_{xxx} = u_x$.  (The pseudoinverse is
well-defined because compacton solutions satisfy
$\hat{u}(\pm 1) = 0$, placing $u$ in the range of $D_x^2 + 1$.)
By the self-adjointness of $(D_x^2 + 1)^{-1}$,
\[
\frac{dI_4}{dt}
= 2\int u_t \cdot v \, dx
= -2\int \bigl[(u^m)_x + (u^m)_{xxx}\bigr] v \, dx.
\]
Integrating by parts (once and three times, respectively) gives
\[
= 2\int u^m \bigl(v_x + v_{xxx}\bigr) dx
= 2\int u^m \, u_x \, dx = 0,
\]
where the last step uses the total derivative identity
\[
\int u^m u_x \, dx = \int \frac{1}{m+1}(u^{m+1})_x \, dx = 0.
\]
The key identity $v_x + v_{xxx} = u_x$ holds precisely because
$(D_x^2 + 1)^{-1}$ commutes with $D_x$; this in turn reflects
the factorization $\mathfrak{D} = -(D_x^3 + D_x) = -D_x(D_x^2 + 1)$
of the Hamiltonian operator.  The cancellation that produces
$dI_4/dt = 0$ requires $m = n$: for general K(m,n), the
computation yields $2\int(u^m - u^n)v_x\,dx$, which does not
vanish when $m \neq n$.

\subsection*{F.3\quad Casimir structure: why four local laws}

Three of the four local laws are \emph{Casimir functionals} of the
Hamiltonian operator $\mathfrak{D} = -(D_x^3 + D_x)$: they are
conserved for \emph{any} evolution equation of the form
$u_t = \mathfrak{D}(f)$, regardless of the choice of~$f$.
A functional $\mathcal{C}$ is a Casimir of~$\mathfrak{D}$ if
$\mathfrak{D}(\delta\mathcal{C}/\delta u) = 0$, i.e., its
variational derivative lies in $\ker(\mathfrak{D})$.  The kernel
of $\mathfrak{D} = -(D_x^3 + D_x)$ consists of solutions to
$\gamma_{xxx} + \gamma_x = 0$, which has the three-dimensional
solution space $\{1,\, \cos x,\, \sin x\}$.  These
yield mass ($I_0$, from $\gamma = 1$) and the two trigonometric
laws ($I_2$ and $I_3$, from $\gamma = \sin x$ and $\cos x$).
No other Casimirs exist.

The fourth local law, energy~($I_1$), is the one conservation law
specific to K(m,m): it is the Hamiltonian functional itself,
$\mathcal{H} = \int u^{m+1}/(m+1)\,dx$, whose conservation
follows from the antisymmetry of~$\mathfrak{D}$.  It is associated
through Noether's theorem with time-translation invariance.
The absence of a recursion operator prevents this single
dynamical conservation law from generating further ones.

The nonlocal law~$I_4$ arises from the factorization
$\mathfrak{D} = -D_x(D_x^2 + 1)$.  Its density involves the
inverse operator $(D_x^2 + 1)^{-1}$, placing it outside the
local framework.  Conservation of $I_4$ requires $m = n$ and
thus reflects the specific algebraic structure of K(m,m).

\subsection*{F.4\quad Evaluated conservation integrals}

For a single compacton with amplitude~$A$, the four local
conservation integrals evaluate explicitly through Beta functions.
Using the universal formulas $\alpha = 2/(m-1)$ and $\ell = 2m/(m-1)$:
\begin{align}
I_0 &= A\ell \cdot B\!\left(\tfrac{\alpha+1}{2},\, \tfrac{1}{2}\right),
\label{eq:mass-evaluated} \\[3pt]
I_1 &= \frac{A^{m+1}\ell}{m+1}
 \cdot B\!\left(\tfrac{(m+1)\alpha+1}{2},\, \tfrac{1}{2}\right).
\label{eq:energy-evaluated}
\end{align}
For K(2,2): $I_0 = 2\pi A$, $I_1 = (5\pi/12) A^3$.
For K(3,3): $I_0 = 6A$, $I_1 = (9\pi/32) A^4$.  The shift from
transcendental ($\pi$-dependent) to rational mass coefficients at $m = 3$
mirrors the regularity transition: cusped edges produce simpler
mass integrals than smooth ones, though the energy retains
$\pi$-dependence in both cases.

The trigonometric integrals $I_2$ and $I_3$ depend on compacton
position as well as amplitude, encoding spatial phase information.
The nonlocal momentum $I_4$ has a closed form on compactons:
since $(D_x^2 + 1)^{-1}U = U^m/c$, it equals $(m+1)I_1/c$, which
scales as $A^2$; for K(2,2), $I_4 = (5\pi/3)A^2$ and for K(3,3),
$I_4 = (27\pi/16)A^2$.  This agrees with the scaling
$I_4 \sim A^2\ell$ verified computationally for $m = 2, \ldots, 7$
using both Maple and Mathematica.

\section{Hamiltonian and Casimir analysis}
\label{app:hamiltonian-detail}

This appendix provides the detailed bi-Hamiltonian obstruction
analysis summarized in \S\ref{subsec:hamiltonian-geometry}.

\subsection*{G.1\quad The KdV bi-Hamiltonian structure}

KdV possesses two compatible Hamiltonian structures: a first-order
operator $\mathcal{J}_1 = -\partial_x$ and a third-order operator
$\mathcal{J}_2 = -\partial_{xxx} - \frac{2}{3}(u\,\partial_x
+ \partial_x\!\cdot\, u)$.  The compatibility condition (that every
linear combination $a\mathcal{J}_1 + b\mathcal{J}_2$ is again a
Poisson operator~\cite{magri1978simple}) produces the recursion
operator $\mathcal{R}
= \mathcal{J}_2\,\mathcal{J}_1^{-1}$, which generates the infinite
conservation hierarchy: applying $\mathcal{R}$ to the gradient of
one conserved functional produces the gradient of the next.  The
Lenard recursion scheme converts bi-Hamiltonian structure into an
algebraic machine for manufacturing conservation
laws~\cite{fokas1981symplectic}.

\subsection*{G.2\quad Why K(m,m) admits no compatible second operator}

The K(m,m) Hamiltonian operator
$\mathfrak{D} = -(D_x^3 + D_x)$ has constant coefficients.
Systematic searches for a second Poisson operator compatible with
$\mathfrak{D}$ fail for all $m \geq 2$.  This failure is
intimately connected to Vodov\'a's symmetry analysis: a compatible
second operator would induce a recursion operator generating
symmetries of arbitrarily high order, contradicting
Proposition~2 of~\cite{vodova2013complete}, which establishes
that no generalized symmetries of order greater than~$3$ exist.
The nonlinear dispersion term $(u^m)_{xxx}$ is the root cause: it
prevents the construction of $u$-dependent Poisson operators that
could serve as bi-Hamiltonian partners.

\subsection*{G.3\quad Three consequences of the obstruction}

The absence of a second compatible Poisson structure has three
concrete consequences for K(m,m):

\begin{enumerate}
\item \emph{No recursion operator.}  Without a compatible pair,
  the Lenard recursion scheme cannot operate.
  This explains the termination of the hierarchy at five laws
  (Theorem~\ref{thm:finite-hierarchy}).

\item \emph{Vacuous involutivity.}  In bi-Hamiltonian systems the
  generated conservation laws are automatically in involution and
  each generates a distinct flow.  For K(m,m) the five laws are in
  involution under~\eqref{eq:poisson-bracket} (three are Casimirs and
  $\{\mathcal{H}, I_4\} = 0$), but only two commuting flows result, time evolution and
  translation.

\item \emph{No action-angle variables.}  Complete integrability in
  the Liouville sense requires sufficiently many conserved quantities
  in involution to fill out the phase space.  With only these two commuting flows, the reduction to action-angle coordinates
  that makes integrable systems explicitly solvable is
  unavailable~\cite{arnold1989mathematical}.
\end{enumerate}

\section{Collision data and detailed numerics}
\label{app:collision-detail}

This appendix provides detailed numerical methodology and
additional results supporting the collision analysis in
\S\ref{subsec:quasi-elastic}.

\subsection*{H.1\quad Alternative elasticity measures}

Several natural alternatives to the amplitude-sum
elasticity~\eqref{eq:elasticity} exist.  An energy-based measure
uses the Hamiltonian energy integral:
\begin{equation}\label{eq:elasticity-energy}
\mathcal{E}_E \;=\;
\frac{E'_1 + E'_2}{E_1 + E_2}, \qquad
\text{where } E_i = D_m\, A_i^{m+1},
\end{equation}
with $D_m = \frac{\ell}{m+1}\,
B\bigl(\frac{(m+1)\alpha+1}{2}, \frac{1}{2}\bigr)$
the energy integral coefficient of~\eqref{eq:energy-evaluated}.  An $L^2$-based measure replaces
the energy by squared norms:
\begin{equation}\label{eq:elasticity-L2}
\mathcal{E}_2 \;=\;
\frac{\|u'_1\|_2^2 + \|u'_2\|_2^2}{\|u_1\|_2^2 + \|u_2\|_2^2}.
\end{equation}
For K(m,m) compactons, Theorem~\ref{thm:universal} yields
$E_i \propto A_i^{m+1}$ and $\|u_i\|_2^2 \propto A_i^{2\alpha+1}\ell$,
so each alternative measure is a monotone function of the amplitude
ratio.  Writing $A'_i = (1 - \epsilon_i)A_i$ with small
$\epsilon_i$, one finds
$\mathcal{E}_E \approx 1 - (m+1)\bar{\epsilon}$ and
$\mathcal{E}_2 \approx 1 - (2\alpha+1)\bar{\epsilon}$,
where $\bar{\epsilon}$ is the weighted mean amplitude deficit.
Since $m + 1 > 1$ and $2\alpha + 1 > 1$ for all $m > 1$, both
alternative measures register a larger departure from unity
than $\mathcal{E}$ itself, but the ordering across different values
of~$m$ is preserved.  The monotonicity claim in
Conjecture~\ref{conj:monotonicity} is therefore invariant
under any elasticity measure that depends monotonically on the
amplitude ratio.

\subsection*{H.2\quad Debris extraction methodology}

Post-collision integration times are computed dynamically to ensure
full compacton separation before debris extraction.
Debris is defined as the residual field after subtracting
Nelder--Mead least-squares fits of compacton profiles to the two
emergent structures.  Spectral analysis is restricted to wavenumbers
below the filter cutoff
$\kappa_{\mathrm{cut}} = 2\pi\lceil N_x/(2q)\rceil / L$, where the
solver's physical dynamics are well resolved; above
$\kappa_{\mathrm{cut}}$, the low-pass filter zeros the advection
term and residual content is numerical rather than physical.
Signal-to-noise ratios are established by comparing collision debris
spectra against single-compacton propagation controls run with
identical parameters.

\subsection*{H.3\quad Extended multi-$m$ validation}

The debris energy fraction ranges from
$3.7 \times 10^{-5}$ at $m = 2$ to $1.5 \times 10^{-6}$ at
$m = 5$, remaining below $4 \times 10^{-5}$ across all cases.  The spectral peak shift from
$k_{\mathrm{peak}} \approx 0.38$ to
$k_{\mathrm{peak}} \approx 3.6$ reflects the narrowing compacton
half-width as $m$ increases (from $\pi\ell/2 \approx 6.28$ at
$m = 2$ to $\pi\ell/2 \approx 3.93$ at $m = 5$): sharper
compacton profiles generate shorter-wavelength debris upon
collision.  Grid convergence tests from $N_x = 256$ through
$N_x = 1024$ confirm that the notch deepens with refinement while
the debris spectrum stabilizes.

\section{BBM comparison: extended results}
\label{app:bbm-detail}

This appendix provides the detailed BBM collision simulation
methodology and extended results supporting the landscape analysis
in \S\ref{subsec:landscape}.

\subsection*{I.1\quad BBM simulation methodology}

We solve the BBM equation in the Fourier-transformed form
\begin{equation}\label{eq:bbm-fourier}
u_t = -\bigl(1 - \partial_{xx}\bigr)^{-1}\partial_x
\bigl(u + \tfrac{1}{2}u^2\bigr),
\end{equation}
where the operator $(1 - \partial_{xx})^{-1}\partial_x$ has
Fourier symbol $-ik/(1 + k^2)$, converting the implicit time
derivative into an explicit ODE\@.  The pseudospectral
discretization uses $N_x = 4096$ grid points on a periodic domain with
two-thirds dealiasing and RK45 time stepping
($\texttt{rtol} = 10^{-8}$, $\texttt{atol} = 10^{-10}$).
Validation against the exact single-soliton solution
$u(x,t) = A\operatorname{sech}^2\bigl(\kappa(x - ct)\bigr)$
with $c = 1 + A/3$ and
$\kappa = \sqrt{(c-1)/(4c)}$ confirms propagation errors below
$2 \times 10^{-7}$, conservation of mass to $10^{-8}$--$10^{-9}$,
and conservation of the~$L^2$ norm to $10^{-6}$--$10^{-7}$.

Two-soliton collisions were performed across a range of amplitudes
($A_1 = 0.5$ to $20$) and amplitude ratios
($r = A_2/A_1 = 0.1$ to $0.9$), with the faster soliton
overtaking the slower.  Post-collision amplitudes were extracted by
peak detection after the solitons separated cleanly.

\subsection*{I.2\quad Extended landscape discussion}

Three additional observations refine the landscape picture.
First, the entries at $N = 2$ and $N = 1$ in the landscape table
represent expectations based on general arguments rather than
systematic numerical studies.
Second, whether the algebraic structure of the conservation laws
matters, and not merely their number, is open: laws that generate independent commuting flows (as in
near-integrable systems) may contribute more to elasticity than
Casimirs, which generate none.
Third, the odd-even asymmetry in collision robustness discovered
by Garral\'on-L\'opez et al.~\cite{garralon2023compacton} provides
independent evidence for finer structure: although all K($p$,$p$)
equations share $N = 5$, the algebraic form of the laws varies
with~$p$, and collision outcomes differ accordingly.

\section{Periodic wave calculations}
\label{app:periodic-detail}

This appendix collects the detailed periodic wave analysis
for K(2,2) shelf waves, supporting the summary in
\S\ref{subsec:k22-explicit}.

\subsection*{J.1\quad Period formula and quadrature}

The small-amplitude oscillation period at the center equilibrium
$v_0 = a^2$ is
\begin{equation}\label{eq:period-k22}
T_0 = \frac{2\pi}{\sqrt{1 - c/(2a)}}\,,
\end{equation}
which diverges as $c \to (2a)^-$.  At intermediate energies, the
period is given by
\begin{equation}\label{eq:period-quadrature}
T(E) = 2\int_{v_{\min}(E)}^{v_{\max}(E)}
  \frac{dv}{\sqrt{2(E - V(v))}}\,,
\end{equation}
where $V$ is given by the K(2,2)
potential~\eqref{eq:V-k22}.

\subsection*{J.2\quad Elliptic structure}

Because $V$ involves $v^{3/2}$ rather than a polynomial in $v$,
the resulting cnoidal wave profiles are not standard Jacobi elliptic
functions.  In the $w$-variable, where the first integral
(compare~\eqref{eq:factored-form}) is a polynomial $P_4(w)$ of degree
four, the curve $y^2 = P_4(w)$ has genus one, so the profiles are
given implicitly by elliptic integrals.  The differential $w\,dw/y$
that defines $\xi$ is of the second kind, which is why the profile is
not a Jacobi $\mathrm{cn}^2$ as it is for KdV.

\section{Open problems}
\label{app:open-problems}

Eight open problems arise from this investigation.  Each is
stated concisely below; extended versions with detailed
research programs and methodological discussion appear in the
Supplement, \S S7.

\medskip
\noindent\textbf{Problem~1. Uniqueness of the cosine-power profile.}
Theorem~\ref{thm:universal} derives the universal formulas from
the cosine-power ansatz; Proposition~\ref{prop:uniqueness}
establishes uniqueness among positive, symmetric, single-hump
solutions.  The full question (whether \emph{every}
compact-support traveling wave of K(m,m) at prescribed speed
takes the cosine-power form without symmetry assumptions) remains
open.  Phase-plane analysis excludes alternatives on the
zero-energy level set, but other energy levels have not been ruled out.

\medskip
\noindent\textbf{Problem~2. Within-class monotonicity and
cross-class geometry.}
BBM solitary waves ($N = 3$, smooth tails) and K(m,m) compactons
($N = 5$, compact support) collide comparably elastically by the
amplitude measure, while K(m,m) sheds less debris energy, so the
comparison does not separate conservation count from interaction
geometry.  Conjecture~\ref{conj:monotonicity} posits
monotonicity \emph{within} each geometric class, with
interaction geometry as a possible independent modulating factor.
Systematic testing across multiple equation families with
different conservation counts and tail behaviors would convert
this into a precise scaling law.

\medskip
\noindent\textbf{Problem~3. Stability theory for fractional-exponent
compactons.}
The translation generator $P$ increases along the compacton
family (Corollary~\ref{cor:VK}), but the Grillakis--Shatah--Strauss
framework does not apply to K(m,m) for any $m > 1$, because the
second variation of $\mathcal{H} - cP$ has infinitely many negative
directions.  A stability theory for compactons needs a different
mechanism.  For $m > 3$
(H\"{o}lder regularity $C^{0,\alpha}$ with $\alpha < 1$),
extending stability theory to weighted Sobolev or Besov spaces
is needed.  The cusp case $m = 3$ (codimension-one transition)
may be the most tractable starting point, and the singular
limit $a \to 0^+$ of smooth shelf-wave stability provides a
regularization pathway.

\medskip
\noindent\textbf{Problem~4. Interaction geometry and collision
elasticity.}
Can interaction geometry be quantified through a single
parameter (tail decay rate $\kappa$ or regularity exponent
$\alpha$) that correlates predictably with collision
elasticity at fixed conservation count?  The K(m,m)
family provides a testing ground ($\alpha = 2/(m-1)$
varies while $N = 5$ is fixed), and the shelf-wave family
offers tunable $\kappa$ within a single equation.

\medskip
\noindent\textbf{Problem~5. Master formulas for the full K(m,n) family.}
Proposition~\ref{prop:exclusivity} shows the cosine-power
ansatz is exclusive to the diagonal $n = m$.  Whether unified
two-parameter formulas $\alpha(m,n)$, $\ell(m,n)$, $c(m,n,A)$
exist for off-diagonal K(m,n) compactons remains open, as does
the square-wave limit $m \to \infty$ (\S\ref{subsec:regularity}),
where $\alpha \to 0^+$.

\medskip
\noindent\textbf{Problem~6. Approximate integrability frameworks.}
Hern\'andez Heredero et al.\ rule out exact Lax pairs for K(m,m)
with $m > 1$, but approximate analogues are not excluded.
Promising directions include asymptotic Lax-operator
deformation from KdV toward K(2,2) and data-driven
discovery of approximate conservation laws via
physics-informed neural networks.

\medskip
\noindent\textbf{Problem~7. Experimental observation of
compactons.}
No controlled laboratory observation of compacton
propagation and collision has been achieved.  Granular
chains with Hertzian contacts and nonlinear optical media
with engineered dispersion are the leading candidate
platforms.

\medskip
\noindent\textbf{Problem~8. Multi-compacton collision asymptotics.}
Pairwise collisions are quasi-elastic; the long-time
behavior of $N$-compacton initial data (whether debris
accumulates or dynamics remains dominated by coherent
structures) is unknown.  Simultaneous three-body collisions
have not been studied systematically.

\bigskip
\section*{Use of Generative-AI tools declaration}
In preparing this manuscript, the author used large language models under his
direction and review. Claude (Anthropic) was used to assist in preparing the
manuscript and the simulation codes; Claude was also used for the verification of several proofs, which the author then checked, and for numerical
certification code in the companion repository.  The mathematical results,
the numerical experiments, and all decisions are the author's; the author reviewed
and edited every passage and takes full responsibility for the content.

\section*{Acknowledgments}
The author sincerely thanks Philip Rosenau, who trusted him to join
in the initial stage of the compacton journey.
Computations were performed using Maple, Mathematica, and
Python (NumPy/SciPy); the spectral debris analysis of
Section~\ref{subsec:quasi-elastic} was implemented in Python.
This paper is dedicated to my mentor and friend, Peter Lax, who
introduced me to solitons, integrable systems, and the conviction
that nonlinear equations harbor discoverable structure.
I am forever grateful for his inspiration and friendship.

\section*{Data availability}
Code and data for this paper are available at
\url{https://github.com/machyman/kmm-compactons}.
The repository includes the simulation code, figure generation scripts
for Figures~\ref{fig:roadmap}--\ref{fig:collision-k22} and
\ref{fig:wave-taxonomy}--\ref{fig:singular-limit} and Supplement
Figure~S1, and verification tests.

\section*{Funding}
The author received no external funding for this research.

\section*{Conflict of interest}
The author declares no conflict of interest.

\providecommand{\href}[2]{#2}
\providecommand{\arxiv}[1]{\href{http://arxiv.org/abs/#1}{arXiv:#1}}
\providecommand{\url}[1]{\texttt{#1}}
\providecommand{\urlprefix}{URL }

\medskip
Received xxxx 20xx; revised xxxx 20xx; early access xxxx 20xx.
\medskip


\begin{thebibliography}{19}
\providecommand{\natexlab}[1]{#1}
\providecommand{\url}[1]{\texttt{#1}}
\expandafter\ifx\csname urlstyle\endcsname\relax
  \providecommand{\doi}[1]{doi: #1}\else
  \providecommand{\doi}{doi: \begingroup \urlstyle{rm}\Url}\fi

\bibitem[Ablowitz and Segur(1981)]{ablowitz1981solitons}
Mark~J. Ablowitz and Harvey Segur.
\newblock \emph{Solitons and the Inverse Scattering Transform}.
\newblock SIAM Studies in Applied Mathematics. SIAM, Philadelphia, 1981.
\newblock ISBN 978-0-89871-477-7.
\newblock \doi{10.1137/1.9781611970883}.

\bibitem[Agrawal(2013)]{agrawal2013nonlinear}
Govind~P. Agrawal.
\newblock \emph{Nonlinear Fiber Optics}.
\newblock Academic Press, San Diego, 5th edition, 2013.
\newblock ISBN 978-0-12-397023-7.

\bibitem[Bona and Smith(1975)]{bona1975initial}
Jerry~L. Bona and Ronald Smith.
\newblock The initial-value problem for the {K}orteweg--de {V}ries equation.
\newblock \emph{Philosophical Transactions of the Royal Society of London.
  Series A, Mathematical and Physical Sciences}, 278\penalty0 (1287):\penalty0
  555--601, 1975.
\newblock \doi{10.1098/rsta.1975.0035}.

\bibitem[Dey and Khare(1998)]{dey1998stability}
B.~Dey and A.~Khare.
\newblock Stability of compacton solutions.
\newblock \emph{Physical Review E}, 58:\penalty0 R2741--R2744, 1998.
\newblock \doi{10.1103/PhysRevE.58.R2741}.

\bibitem[Garral\'{o}n-L\'{o}pez et~al.(2023)Garral\'{o}n-L\'{o}pez, Rus, and
  Villatoro]{garralon2023robustness}
Rub\'{e}n Garral\'{o}n-L\'{o}pez, Francisco Rus, and Francisco~R. Villatoro.
\newblock Robustness of the absolute {R}osenau--{H}yman {$|K|(p,p)$} equation
  with non-integer {$p$}.
\newblock \emph{Chaos, Solitons \& Fractals}, 169:\penalty0 113216, 2023.
\newblock \doi{10.1016/j.chaos.2023.113216}.

\bibitem[Grillakis et~al.(1987)Grillakis, Shatah, and
  Strauss]{grillakis1987stability}
Manoussos Grillakis, Jalal Shatah, and Walter Strauss.
\newblock Stability theory of solitary waves in the presence of symmetry, {I}.
\newblock \emph{Journal of Functional Analysis}, 74\penalty0 (1):\penalty0
  160--197, 1987.
\newblock \doi{10.1016/0022-1236(87)90044-9}.

\bibitem[Grillakis et~al.(1990)Grillakis, Shatah, and
  Strauss]{grillakis1990stability}
Manoussos Grillakis, Jalal Shatah, and Walter Strauss.
\newblock Stability theory of solitary waves in the presence of symmetry, {II}.
\newblock \emph{Journal of Functional Analysis}, 94\penalty0 (2):\penalty0
  308--348, 1990.
\newblock \doi{10.1016/0022-1236(90)90016-E}.

\bibitem[Hern\'{a}ndez~Heredero et~al.(2020)Hern\'{a}ndez~Heredero, Euler,
  Euler, and Reyes]{heredero2020nonexistence}
Rafael Hern\'{a}ndez~Heredero, Marianna Euler, Norbert Euler, and Enrique~G.
  Reyes.
\newblock Compacton equations and integrability: {T}he {R}osenau--{H}yman and
  {C}ooper--{S}hepard--{S}odano equations.
\newblock \emph{Discrete and Continuous Dynamical Systems}, 40\penalty0
  (1):\penalty0 529--548, 2020.
\newblock \doi{10.3934/dcds.2020021}.

\bibitem[Hyman(2026)]{hyman2026filters}
James~M. Hyman.
\newblock Spectrally-filtered high-order methods for {$K(m,m)$} compacton
  equations: convergence, regularity floors, and collision dynamics.
\newblock In preparation, 2026.

\bibitem[Kivshar and Malomed(1989)]{kivshar1989dynamics}
Yuri~S. Kivshar and Boris~A. Malomed.
\newblock Dynamics of solitons in nearly integrable systems.
\newblock \emph{Reviews of Modern Physics}, 61\penalty0 (4):\penalty0 763--915,
  1989.
\newblock \doi{10.1103/RevModPhys.61.763}.

\bibitem[Nesterenko(1983)]{nesterenko1983propagation}
Vitali~F. Nesterenko.
\newblock Propagation of nonlinear compression pulses in granular media.
\newblock \emph{Journal of Applied Mechanics and Technical Physics},
  24\penalty0 (5):\penalty0 733--743, 1983.
\newblock \doi{10.1007/BF00905892}.

\bibitem[Nesterenko(2001)]{nesterenko2001dynamics}
Vitali~F. Nesterenko.
\newblock \emph{Dynamics of Heterogeneous Materials}.
\newblock Shock Wave and High Pressure Phenomena. Springer, New York, 2001.
\newblock ISBN 978-0-387-95266-6.
\newblock \doi{10.1007/978-1-4757-3524-6}.

\bibitem[Olver(1993)]{olver1993applications}
Peter~J. Olver.
\newblock \emph{Applications of {L}ie Groups to Differential Equations}, volume
  107 of \emph{Graduate Texts in Mathematics}.
\newblock Springer-Verlag, New York, 2nd edition, 1993.
\newblock ISBN 978-0-387-95000-6.

\bibitem[Raissi et~al.(2019)Raissi, Perdikaris, and
  Karniadakis]{raissi2019physics}
Maziar Raissi, Paris Perdikaris, and George~Em Karniadakis.
\newblock Physics-informed neural networks: {A} deep learning framework for
  solving forward and inverse problems involving nonlinear partial differential
  equations.
\newblock \emph{Journal of Computational Physics}, 378:\penalty0 686--707,
  2019.
\newblock \doi{10.1016/j.jcp.2018.10.045}.

\bibitem[Rosenau and Hyman(1993)]{rosenau1993compactons}
Philip Rosenau and James~M. Hyman.
\newblock Compactons: Solitons with finite wavelength.
\newblock \emph{Physical Review Letters}, 70\penalty0 (5):\penalty0 564--567,
  1993.
\newblock \doi{10.1103/PhysRevLett.70.564}.

\bibitem[Rosenau and Pikovsky(2005)]{rosenau2005phase}
Philip Rosenau and Arkady Pikovsky.
\newblock Phase compactons in chains of dispersively coupled oscillators.
\newblock \emph{Physical Review Letters}, 94:\penalty0 174102, 2005.
\newblock \doi{10.1103/PhysRevLett.94.174102}.

\bibitem[Rosenau and Zilburg(2018)]{rosenau2018compactons}
Philip Rosenau and Alon Zilburg.
\newblock Compactons.
\newblock \emph{Journal of Physics A: Mathematical and Theoretical},
  51\penalty0 (34):\penalty0 343001, 2018.
\newblock \doi{10.1088/1751-8121/aabff5}.

\bibitem[Whitham(1974)]{whitham1974linear}
Gerald~B. Whitham.
\newblock \emph{Linear and Nonlinear Waves}.
\newblock Wiley-Interscience, New York, 1974.
\newblock ISBN 978-0-471-94090-6.

\bibitem[Xing and Charalampidis(2024)]{xing2024learning}
Siyuan Xing and Efstathios~G. Charalampidis.
\newblock Learning traveling solitary waves using separable {G}aussian neural
  networks.
\newblock \emph{Entropy}, 26\penalty0 (5):\penalty0 396, 2024.
\newblock \doi{10.3390/e26050396}.

\end{thebibliography}


\begin{thebibliography}{10}

\bibitem{ablowitz1981solitons}
\newblock M.~J. Ablowitz and H.~Segur,
\newblock \emph{Solitons and the Inverse Scattering Transform},
\newblock SIAM Studies in Applied Mathematics, SIAM, Philadelphia, 1981.

\bibitem{anco2024weak}
\newblock S.~C. Anco and M.~L. Gandarias,
\newblock Weak compactons of nonlinearly dispersive {KdV} and {KP} equations,
\newblock \emph{Studies in Applied Mathematics}, \textbf{153} (2024), e12777.

\bibitem{arnold1989mathematical}
\newblock V.~I. Arnold,
\newblock \emph{Mathematical Methods of Classical Mechanics}, vol.~60 of
  Graduate Texts in Mathematics,
\newblock 2nd edition,
\newblock Springer-Verlag, New York, 1989.

\bibitem{benjamin1972stability}
\newblock T.~B. Benjamin,
\newblock The stability of solitary waves,
\newblock \emph{Proceedings of the Royal Society of London. Series A},
  \textbf{328} (1972), 153--183.

\bibitem{benjamin1972model}
\newblock T.~B. Benjamin, J.~L. Bona and J.~J. Mahony,
\newblock Model equations for long waves in nonlinear dispersive systems,
\newblock \emph{Philosophical Transactions of the Royal Society of London.
  Series A, Mathematical and Physical Sciences}, \textbf{272} (1972), 47--78.

\bibitem{bona1980solitary}
\newblock J.~L. Bona, W.~G. Pritchard and L.~R. Scott,
\newblock Solitary-wave interaction,
\newblock \emph{Physics of Fluids}, \textbf{23} (1980), 438--441.

\bibitem{chicone1987monotonicity}
\newblock C.~Chicone,
\newblock The monotonicity of the period function for planar {H}amiltonian
  vector fields,
\newblock \emph{Journal of Differential Equations}, \textbf{69} (1987),
  310--321.

\bibitem{cooper2001compacton}
\newblock F.~Cooper, J.~M. Hyman and A.~Khare,
\newblock Compacton solutions in a class of generalized fifth-order
  {K}orteweg--de {V}ries equations,
\newblock \emph{Physical Review E}, \textbf{64} (2001), 026608.

\bibitem{cooper1993solitary}
\newblock F.~Cooper, H.~Shepard and P.~Sodano,
\newblock Solitary waves in a class of generalized {K}orteweg--de {V}ries
  equations,
\newblock \emph{Physical Review E}, \textbf{48} (1993), 4027--4032.

\bibitem{drazin1989solitons}
\newblock P.~G. Drazin and R.~S. Johnson,
\newblock \emph{Solitons: An Introduction},
\newblock Cambridge University Press, Cambridge, 1989.

\bibitem{fermi1955studies}
\newblock E.~Fermi, J.~Pasta and S.~Ulam,
\newblock \emph{Studies of nonlinear problems},
\newblock Technical Report LA-1940, Los Alamos Scientific Laboratory, 1955,
\newblock Reprinted in \emph{Collected Papers of Enrico Fermi}, Vol.~II,
  University of Chicago Press, 1965.

\bibitem{fokas1981symplectic}
\newblock B.~Fuchssteiner and A.~S. Fokas,
\newblock Symplectic structures, their {B\"{a}cklund} transformations and
  hereditary symmetries,
\newblock \emph{Physica D: Nonlinear Phenomena}, \textbf{4} (1981), 47--66.

\bibitem{gardner1967method}
\newblock C.~S. Gardner, J.~M. Greene, M.~D. Kruskal and R.~M. Miura,
\newblock Method for solving the {K}orteweg--de {V}ries equation,
\newblock \emph{Physical Review Letters}, \textbf{19} (1967), 1095--1097.

\bibitem{garralon2023compacton}
\newblock R.~Garral\'{o}n-L\'{o}pez, F.~Rus and F.~R. Villatoro,
\newblock Compacton-anticompacton collisions in the {R}osenau--{H}yman
  {$K(p,p)$} equation by numerical simulations with hyperviscosity,
\newblock \emph{Communications in Nonlinear Science and Numerical Simulation},
  \textbf{117} (2023), 106927.

\bibitem{garralon2023robustness}
\newblock R.~Garral\'{o}n-L\'{o}pez, F.~Rus and F.~R. Villatoro,
\newblock Robustness of the absolute {R}osenau--{H}yman {$|K|(p,p)$} equation
  with non-integer {$p$},
\newblock \emph{Chaos, Solitons \& Fractals}, \textbf{169} (2023), 113216.

\bibitem{garralon2024numerical}
\newblock R.~Garral\'{o}n-L\'{o}pez, F.~Rus and F.~R. Villatoro,
\newblock Numerical adiabatic perturbation theory for the absolute {$|K|(p,p)$}
  equation,
\newblock \emph{Mathematics and Computers in Simulation}, \textbf{225} (2024),
  1257--1276.

\bibitem{grillakis1987stability}
\newblock M.~Grillakis, J.~Shatah and W.~Strauss,
\newblock Stability theory of solitary waves in the presence of symmetry, {I},
\newblock \emph{Journal of Functional Analysis}, \textbf{74} (1987), 160--197.

\bibitem{grillakis1990stability}
\newblock M.~Grillakis, J.~Shatah and W.~Strauss,
\newblock Stability theory of solitary waves in the presence of symmetry, {II},
\newblock \emph{Journal of Functional Analysis}, \textbf{94} (1990), 308--348.

\bibitem{heredero2020nonexistence}
\newblock R.~Hern\'{a}ndez~Heredero, M.~Euler, N.~Euler and E.~G. Reyes,
\newblock Compacton equations and integrability: {T}he {R}osenau--{H}yman and
  {C}ooper--{S}hepard--{S}odano equations,
\newblock \emph{Discrete and Continuous Dynamical Systems}, \textbf{40} (2020),
  529--548.

\bibitem{hirota1971exact}
\newblock R.~Hirota,
\newblock Exact solution of the {K}orteweg--de {V}ries equation for multiple
  collisions of solitons,
\newblock \emph{Physical Review Letters}, \textbf{27} (1971), 1192--1194.

\bibitem{hyman2026filters}
\newblock J.~M. Hyman,
\newblock Spectrally-filtered high-order methods for {$K(m,m)$} compacton
  equations: convergence, regularity floors, and collision dynamics, 2026,
\newblock In preparation.

\bibitem{ismail1998numerical}
\newblock M.~S. Ismail and T.~R. Taha,
\newblock A numerical study of compactons,
\newblock \emph{Mathematics and Computers in Simulation}, \textbf{47} (1998),
  519--530.

\bibitem{korteweg1895change}
\newblock D.~J. Korteweg and G.~de~Vries,
\newblock On the change of form of long waves advancing in a rectangular canal,
  and on a new type of long stationary waves,
\newblock \emph{Philosophical Magazine}, \textbf{39} (1895), 422--443.

\bibitem{lax1968integrals}
\newblock P.~D. Lax,
\newblock Integrals of nonlinear equations of evolution and solitary waves,
\newblock \emph{Communications on Pure and Applied Mathematics}, \textbf{21}
  (1968), 467--490.

\bibitem{lax1975periodic}
\newblock P.~D. Lax,
\newblock Periodic solutions of the {KdV} equation,
\newblock \emph{Communications on Pure and Applied Mathematics}, \textbf{28}
  (1975), 141--188.

\bibitem{lax1976almost}
\newblock P.~D. Lax,
\newblock Almost periodic solutions of the {KdV} equation,
\newblock \emph{SIAM Review}, \textbf{18} (1976), 351--375.

\bibitem{lax2006hyperbolic}
\newblock P.~D. Lax,
\newblock \emph{Hyperbolic Partial Differential Equations}, vol.~14 of Courant
  Lecture Notes,
\newblock American Mathematical Society, Providence, RI, 2006.

\bibitem{lax1983small}
\newblock P.~D. Lax and C.~D. Levermore,
\newblock The small dispersion limit of the {K}orteweg--de {V}ries equation.
  {I}, {II}, {III},
\newblock \emph{Communications on Pure and Applied Mathematics}, \textbf{36}
  (1983), 253--290, 571--593, 809--829.

\bibitem{magri1978simple}
\newblock F.~Magri,
\newblock A simple model of the integrable {H}amiltonian equation,
\newblock \emph{Journal of Mathematical Physics}, \textbf{19} (1978),
  1156--1162.

\bibitem{martel2010description}
\newblock Y.~Martel, F.~Merle and T.~Mizumachi,
\newblock Description of the inelastic collision of two solitary waves for the
  {BBM} equation,
\newblock \emph{Archive for Rational Mechanics and Analysis}, \textbf{196}
  (2010), 517--574.

\bibitem{mihaila2010stability}
\newblock B.~Mihaila, A.~Cardenas, F.~Cooper and A.~Saxena,
\newblock Stability and dynamical properties of {R}osenau--{H}yman compactons
  using {P}ad\'e approximants,
\newblock \emph{Physical Review E}, \textbf{81} (2010), 056708.

\bibitem{miura1968korteweg}
\newblock R.~M. Miura,
\newblock {K}orteweg--de {V}ries equation and generalizations. {I}. {A}
  remarkable explicit nonlinear transformation,
\newblock \emph{Journal of Mathematical Physics}, \textbf{9} (1968),
  1202--1204.

\bibitem{nesterenko1983propagation}
\newblock V.~F. Nesterenko,
\newblock Propagation of nonlinear compression pulses in granular media,
\newblock \emph{Journal of Applied Mechanics and Technical Physics},
  \textbf{24} (1983), 733--743.

\bibitem{nesterenko2001dynamics}
\newblock V.~F. Nesterenko,
\newblock \emph{Dynamics of Heterogeneous Materials},
\newblock Shock Wave and High Pressure Phenomena, Springer, New York, 2001.

\bibitem{noether1918invariante}
\newblock E.~Noether,
\newblock Invariante {V}ariationsprobleme,
\newblock \emph{Nachrichten von der Gesellschaft der Wissenschaften zu
  G\"{o}ttingen}, 235--257,
\newblock English translation: Transport Theory and Statistical Physics,
  1(3):186--207, 1971.

\bibitem{olver1979euler}
\newblock P.~J. Olver,
\newblock Euler operators and conservation laws of the {BBM} equation,
\newblock \emph{Mathematical Proceedings of the Cambridge Philosophical
  Society}, \textbf{85} (1979), 143--160.

\bibitem{olver1993applications}
\newblock P.~J. Olver,
\newblock \emph{Applications of {L}ie Groups to Differential Equations}, vol.
  107 of Graduate Texts in Mathematics,
\newblock 2nd edition,
\newblock Springer-Verlag, New York, 1993.

\bibitem{olver1996tri}
\newblock P.~J. Olver and P.~Rosenau,
\newblock Tri-{H}amiltonian duality between solitons and solitary-wave solutions having
  compact support,
\newblock \emph{Physical Review E}, \textbf{53} (1996), 1900--1906.

\bibitem{rosenau1994nonlinear}
\newblock P.~Rosenau,
\newblock Nonlinear dispersion and compact structures,
\newblock \emph{Physical Review Letters}, \textbf{73} (1994), 1737--1741.

\bibitem{rosenau1997nonanalytic}
\newblock P.~Rosenau,
\newblock On nonanalytic solitary waves formed by a nonlinear dispersion,
\newblock \emph{Physics Letters A}, \textbf{230} (1997), 305--318.

\bibitem{rosenau1998class}
\newblock P.~Rosenau,
\newblock On a class of nonlinear dispersive-dissipative interactions,
\newblock \emph{Physica D: Nonlinear Phenomena}, \textbf{123} (1998), 525--546.

\bibitem{rosenau2000compact}
\newblock P.~Rosenau,
\newblock Compact and noncompact dispersive patterns,
\newblock \emph{Physics Letters A}, \textbf{275} (2000), 193--203.

\bibitem{rosenau1993compactons}
\newblock P.~Rosenau and J.~M. Hyman,
\newblock Compactons: Solitons with finite wavelength,
\newblock \emph{Physical Review Letters}, \textbf{70} (1993), 564--567.

\bibitem{rosenau2018compactons}
\newblock P.~Rosenau and A.~Zilburg,
\newblock Compactons,
\newblock \emph{Journal of Physics A: Mathematical and Theoretical},
  \textbf{51} (2018), 343001.

\bibitem{vakhitov1973stationary}
\newblock N.~G. Vakhitov and A.~A. Kolokolov,
\newblock Stationary solutions of the wave equation in a medium with
  nonlinearity saturation,
\newblock \emph{Radiophysics and Quantum Electronics}, \textbf{16} (1973),
  783--789.

\bibitem{vodova2013complete}
\newblock J.~Vodov\'{a},
\newblock A complete list of conservation laws for non-integrable compacton
  equations of {$K(m,m)$} type,
\newblock \emph{Nonlinearity}, \textbf{26} (2013), 757--762.

\bibitem{zabusky1965interaction}
\newblock N.~J. Zabusky and M.~D. Kruskal,
\newblock Interaction of ``solitons'' in a collisionless plasma and the
  recurrence of initial states,
\newblock \emph{Physical Review Letters}, \textbf{15} (1965), 240--243.

\end{thebibliography}
\end{document}


\maketitle

\begin{abstract}
This supplement accompanies the main paper on K(m,m) compacton
equations.  It documents the computational methodology,
archived numerical outputs, additional
simulations, and reproducibility information for all numerical
results reported in the main text.  Section~\ref{supp:sec:collision-data}
tabulates the full K($m$,$m$) collision dataset;
Section~\ref{supp:sec:bbm-data} provides the BBM comparison data;
Section~\ref{supp:sec:simulations} describes additional
computational investigations; and
Section~\ref{supp:sec:reproducibility} documents the
computational setup, parameter choices, and validation data;
the code and data repository is public.
\end{abstract}

\section{Overview and Relationship to Main Paper}
\label{supp:sec:overview}

This document collects computational material supporting the
main paper.  All collision simulations in the main text use the
spectrally-filtered pseudospectral method (PSM) described in
Appendix~D of the main paper, with parameters
$N_x = 512$ grid points, exponential low-pass filter with cutoff $q = 2$,
biharmonic ramp dissipation with $\delta = 0.15$,
and fixed-step classical RK4 time integration.
The runs of Table~\ref{supp:tab:kmm-full}, which underlie
Table~\ref{M-tab:multi-m-results} and Figure~\ref{M-fig:debris-spectrum} of the main paper,
and the collision of its Figure~\ref{M-fig:collision-k22} use a periodic domain with $L = 100$ and grid spacing
$\Delta x = L/N \approx 0.195$; the two collisions quoted in
Section~\ref{M-subsec:quasi-elastic} of the main paper use $[0, 200]$.

The K($m$,$m$) collision data
(Section~\ref{supp:sec:collision-data}) supports
Section~5 of the main paper; the BBM comparison data
(Section~\ref{supp:sec:bbm-data}) supports the landscape
analysis.  Post-collision amplitudes are
extracted by Nelder--Mead least-squares fitting of compacton
profiles.  Debris spectral analysis is restricted to wavenumbers
below the filter cutoff
$\kappa_{\mathrm{cut}} = 2\pi\lceil N_x/(2q)\rceil / L
\approx 8.04$ rad/unit length.

\begin{table}[ht]
\centering
\renewcommand{\arraystretch}{1.3}
\begin{tabular}{@{}cccccl@{}}
\toprule
$m$ & $\alpha = \dfrac{2}{m-1}$ & $\ell = \dfrac{2m}{m-1}$
  & $k(m) = \dfrac{m+1}{2m}$ & $\alpha\cdot\ell$ & Regime \\
\midrule
2 & 2     & 4     & 3/4   & 8      & Smooth ($C^1$) \\
3 & 1     & 3     & 2/3   & 3      & Cusp ($C^0$) \\
4 & 2/3   & 8/3   & 5/8   & 16/9   & Fractional \\
5 & 1/2   & 5/2   & 3/5   & 5/4    & Fractional \\
6 & 2/5   & 12/5  & 7/12  & 24/25  & Fractional \\
7 & 1/3   & 7/3   & 4/7   & 7/9    & Fractional \\
\midrule
$\infty$ & $0^+$ & $2^+$ & $1/2$ & $0^+$ & Discontinuous limit \\
\bottomrule
\end{tabular}
\caption{Universal formula values for K(m,m) compactons, $m = 2,\ldots,7$,
with limiting behavior as $m \to \infty$.  Reproduced from
the main paper's Theorem~4.1 for reference.}
\label{supp:tab:universal-verification}
\end{table}

\section{Complete K(m,m) Collision Dataset}
\label{supp:sec:collision-data}

Table~\ref{supp:tab:kmm-full} reports the complete collision
measurements for K($m$,$m$) with $m = 2, 3, 4, 5$, pre-collision
amplitudes $A_1 = 1.0$, $A_2 = 0.5$ (ratio $r = 0.5$), and
$N_x = 512$ grid points.  These are the data underlying
Table~\ref{M-tab:multi-m-results} and Figure~\ref{M-fig:debris-spectrum} of the main paper.

\begin{table}[ht]
\centering
\renewcommand{\arraystretch}{1.25}
\caption{Complete K($m$,$m$) collision measurements.
Pre-collision amplitudes: $A_1 = 1.0$, $A_2 = 0.5$.
Grid: $N_x = 512$, $L = 100$, PSM, $q = 2$, $\delta = 0.15$.}
\label{supp:tab:kmm-full}
\begin{tabular}{@{}l r r r r@{}}
\toprule
\textbf{Quantity} & $m = 2$ & $m = 3$ & $m = 4$ & $m = 5$ \\
\midrule
\multicolumn{5}{@{}l}{\emph{Compacton parameters}} \\
Edge exponent $\alpha$ & $2$ & $1$ & $2/3$ & $1/2$ \\
Half-width $\pi\ell/2$ & $6.28$ & $4.71$ & $4.19$ & $3.93$ \\
Speed $c_1$ & $0.750$ & $0.667$ & $0.625$ & $0.600$ \\
Speed $c_2$ & $0.375$ & $0.167$ & $0.078$ & $0.038$ \\
\midrule
\multicolumn{5}{@{}l}{\emph{Simulation parameters}} \\
Collision time $t_{\mathrm{col}}$ & $80.3$ & $47.7$ & $39.8$ & $36.8$ \\
End time $t_{\mathrm{end}}$ & $144.6$ & $85.9$ & $71.6$ & $66.3$ \\
Time steps & $76{,}218$ & $67{,}866$ & $75{,}467$ & $87{,}298$ \\
\midrule
\multicolumn{5}{@{}l}{\emph{Post-collision amplitudes}} \\
$A'_1$ (fitted) & $0.999942$ & $0.999736$ & $0.999920$ & $1.000105$ \\
$A'_2$ (fitted) & $0.497957$ & $0.499607$ & $0.499948$ & $0.499982$ \\
\midrule
\multicolumn{5}{@{}l}{\emph{Debris spectral analysis}} \\
$E_{\mathrm{debris}}/E_{\mathrm{tot}}$ &
  $3.67 \times 10^{-5}$ & $5.22 \times 10^{-7}$ &
  $7.35 \times 10^{-7}$ & $1.48 \times 10^{-6}$ \\
$|\hat{w}(1)|$ & $6.10 \times 10^{-3}$ & $7.02 \times 10^{-3}$ &
  $1.87 \times 10^{-2}$ & $2.21 \times 10^{-2}$ \\
Notch ratio $|\hat{w}(1)|/\max|\hat{w}|$ &
  $0.0037$ & $0.0068$ & $0.0269$ & $0.0268$ \\
$k_{\mathrm{peak}}$ & $0.38$ & $0.38$ & $2.39$ & $3.58$ \\
\midrule
\multicolumn{5}{@{}l}{\emph{Conservation diagnostics}} \\
Mass error & $1.5 \times 10^{-15}$ & $2.0 \times 10^{-16}$ &
  $1.4 \times 10^{-15}$ & $<10^{-16}$ \\
Energy error & $5.5 \times 10^{-9}$ & $4.2 \times 10^{-7}$ &
  $5.3 \times 10^{-6}$ & $7.0 \times 10^{-6}$ \\
\bottomrule
\end{tabular}
\end{table}

\paragraph{Debris assessment.}
The spectrally-filtered pseudospectral method produces debris energy
fractions ranging from $3.7 \times 10^{-5}$ at $m = 2$ to
$1.5 \times 10^{-6}$ at $m = 5$, all well below the levels
reported for the earlier finite-difference
scheme~\cite{hyman2026filters}.  The spectral notch near $k = 1$
is clearly visible for all values of~$m$, with notch ratios
below $0.03$, confirming the prediction of the constraint-deficit
mechanism (Theorem~\ref{M-thm:debris-bound}(c)).

\section{BBM Comparison Dataset}
\label{supp:sec:bbm-data}

Tables~\ref{supp:tab:bbm-fixed-ratio},
\ref{supp:tab:bbm-varying-ratio},
and~\ref{supp:tab:bbm-large} report the complete BBM
collision measurements supporting the landscape analysis
in Section~5 of the main paper.

\paragraph{Method.}
The BBM equation is solved in Fourier-transformed form
$u_t = -(1 - \partial_{xx})^{-1}\partial_x(u + \tfrac{1}{2}u^2)$
with pseudospectral discretization ($N_x = 4096$ grid points, two-thirds
dealiasing, RK45 with $\texttt{rtol} = 10^{-8}$,
$\texttt{atol} = 10^{-10}$).  Validation: single-soliton
propagation error $1.48 \times 10^{-7}$; mass conservation
$\sim 10^{-8}$--$10^{-9}$; $L^2$ conservation $\sim 10^{-6}$--$10^{-7}$.

\begin{table}[ht]
\centering
\renewcommand{\arraystretch}{1.2}
\caption{BBM collision elasticity at fixed amplitude ratio
$r = A_2/A_1 = 0.5$.}
\label{supp:tab:bbm-fixed-ratio}
\begin{tabular}{@{}r r c c@{}}
\toprule
$A_1$ & $A_2$ & $\mathcal{E}_{\mathrm{amp}}$ &
  $E_{\mathrm{debris}}/E_{\mathrm{tot}}$ \\
\midrule
$0.5$ & $0.25$ & $0.999988$ & $8.54 \times 10^{-6}$ \\
$1.0$ & $0.50$ & $0.999810$ & $1.41 \times 10^{-4}$ \\
$2.0$ & $1.00$ & $0.999592$ & $2.69 \times 10^{-4}$ \\
$4.0$ & $2.00$ & $0.999791$ & $3.49 \times 10^{-4}$ \\
$6.0$ & $3.00$ & $1.000050$ & $3.57 \times 10^{-5}$ \\
$10.0$ & $5.00$ & $0.999674$ & $2.05 \times 10^{-4}$ \\
\bottomrule
\end{tabular}
\end{table}

\begin{table}[ht]
\centering
\renewcommand{\arraystretch}{1.2}
\caption{BBM collision elasticity at fixed $A_1 = 2.0$,
varying amplitude ratio.}
\label{supp:tab:bbm-varying-ratio}
\begin{tabular}{@{}r r c c@{}}
\toprule
$A_2$ & $r = A_2/A_1$ & $\mathcal{E}_{\mathrm{amp}}$ &
  $E_{\mathrm{debris}}/E_{\mathrm{tot}}$ \\
\midrule
$1.8$ & $0.9$ & $0.999798$ & $1.19 \times 10^{-4}$ \\
$1.6$ & $0.8$ & $0.999570$ & $3.25 \times 10^{-4}$ \\
$1.4$ & $0.7$ & $0.999798$ & $2.44 \times 10^{-4}$ \\
$1.0$ & $0.5$ & $0.999592$ & $2.69 \times 10^{-4}$ \\
$0.6$ & $0.3$ & $0.999345$ & $5.87 \times 10^{-4}$ \\
$0.2$ & $0.1$ & $0.999808$ & $1.56 \times 10^{-4}$ \\
\bottomrule
\end{tabular}
\end{table}

\begin{table}[ht]
\centering
\renewcommand{\arraystretch}{1.2}
\caption{BBM collision elasticity for large amplitudes.}
\label{supp:tab:bbm-large}
\begin{tabular}{@{}r r c c@{}}
\toprule
$A_1$ & $A_2$ & $\mathcal{E}_{\mathrm{amp}}$ &
  $E_{\mathrm{debris}}/E_{\mathrm{tot}}$ \\
\midrule
$10.0$ & $5.0$ & $0.999674$ & $2.05 \times 10^{-4}$ \\
$10.0$ & $2.0$ & $0.999862$ & $8.07 \times 10^{-5}$ \\
$20.0$ & $5.0$ & $0.999622$ & $2.24 \times 10^{-4}$ \\
$20.0$ & $2.0$ & $0.999754$ & $1.97 \times 10^{-4}$ \\
\bottomrule
\end{tabular}
\end{table}

\paragraph{Summary statistics.}
Across the 16 rows of
Tables~\ref{supp:tab:bbm-fixed-ratio}--\ref{supp:tab:bbm-large}
(14 distinct configurations: $(A_1, A_2) = (2, 1)$ appears in
Tables~\ref{supp:tab:bbm-fixed-ratio} and~\ref{supp:tab:bbm-varying-ratio},
and $(10, 5)$ in Tables~\ref{supp:tab:bbm-fixed-ratio} and~\ref{supp:tab:bbm-large},
each time with identical values), the BBM amplitude
elasticity ranges from $\mathcal{E} = 0.9993$ to $1.0001$, with
a typical value of $0.9997 \pm 0.0002$ (mean and standard deviation
over the 14 distinct configurations).  Debris energy fractions
range from $10^{-5}$ to $6 \times 10^{-4}$.  The near-unit
elasticity $\mathcal{E} > 1$ observed at $(A_1, A_2) = (6.0, 3.0)$
is within the measurement uncertainty of the peak-detection method
and does not indicate energy creation; it likely reflects a small
bias in the post-collision amplitude extraction.

\section{Additional Computational Investigations}
\label{supp:sec:simulations}

This section describes supplementary numerical experiments
that extend the main paper's results.

\subsection{Compacton emergence: extended data}
\label{supp:subsec:emergence-extended}

The main paper's Section~\ref{M-sec:emergence} establishes
compacton emergence from a wide-compacton initial pulse for the
K$(m,m)$ family ($m = 2, 3, 4, 5$) and the mixed cases K$(2,3)$
and K$(3,2)$.  This subsection provides three pieces of
supplementary evidence: a direct propagation-speed validation
table, the sub-threshold trailing-peak amplitudes that fall
below the genuine-compacton cutoff in the main paper's table,
and a complementary experiment using Gaussian rather than
wide-compacton initial data.

\paragraph{Direct propagation-speed validation.}
The implications statement in Section~\ref{M-sec:emergence}
references a direct measurement of the leading-compacton
propagation speed.  The full validation data are reported in
Table~\ref{supp:tab:emergence-speeds}.  Peak positions were tracked
across four snapshots at $t = 100, 125, 150, 175$, and the
propagation speed was extracted by linear regression of peak
position versus time for each detected compacton.  The measured
speed is then compared to the exact traveling-wave prediction
$c_{\mathrm{pred}} = [(n+1)/(m+n)]\,A^{m-1}$ (evaluated with the
mean amplitude across snapshots), which reduces to the universal
formula $[(m+1)/(2m)]\,A^{m-1}$ for K$(m,m)$.

\begin{table}[ht]
\centering
\renewcommand{\arraystretch}{1.15}
\caption{Direct measurement of leading-compacton propagation
speeds compared to the exact traveling-wave prediction
$c_{\mathrm{pred}} = [(n+1)/(m+n)]\,A^{m-1}$.  Peak positions
tracked at $t = 100, 125, 150, 175$; speed by linear regression
in time.  For K$(m,m)$, the prediction holds to within $1\%$ for
the leading compacton; for the mixed cases the exact coefficient
$(n+1)/(m+n)$ replaces $(m+1)/(2m)$, leaving residual deviations
of $0.7\%$ for K$(2,3)$ and $3.1\%$ for K$(3,2)$.}
\label{supp:tab:emergence-speeds}
\begin{tabular}{@{}l c c c r@{}}
\toprule
Equation & $A_{\mathrm{mean}}$ &
$c_{\mathrm{measured}}$ & $c_{\mathrm{predicted}}$ &
$(c_{\mathrm{m}} - c_{\mathrm{p}})/c_{\mathrm{p}}$ \\
\midrule
K$(2,2)$ & $1.277$ & $0.957$ & $0.957$ & $-0.02\%$ \\
K$(3,3)$ & $1.196$ & $0.957$ & $0.954$ & $+0.26\%$ \\
K$(4,4)$ & $1.162$ & $0.977$ & $0.982$ & $-0.50\%$ \\
K$(5,5)$ & $1.131$ & $0.976$ & $0.982$ & $-0.56\%$ \\
\midrule
K$(2,3)$ & $1.140$ & $0.918$ & $0.912$ & $+0.66\%$ \\
K$(3,2)$ & $1.330$ & $1.094$ & $1.061$ & $+3.08\%$ \\
\bottomrule
\end{tabular}
\end{table}

\paragraph{Sub-threshold trailing peaks.}
The main paper reports trailing compactons with amplitude above
the genuine-compacton threshold of $0.08$.  The peak detector
also identifies fainter structures below this threshold that the
authors consider unresolved at the present spectral resolution.
For completeness, the K$(3,3)$ run detected a fourth peak at
$A_4 \approx 0.10$, just above the threshold; it is not cleanly
separable from numerical noise and is reported in the main table
as marginal, marked with a dagger.  K$(4,4)$ and K$(5,5)$ produced no detectable
fourth structure.  Whether these are physical compactons that
would be resolved at higher~$N_x$, or numerical artifacts at the
current resolution, requires a refinement study deferred to
future work.

\paragraph{Gaussian initial data: complementary experiment.}
The main paper's emergence experiment uses a stretched
wide-compacton profile as initial data.  As a complementary
test of robustness, we also evolved K(2,2) from a Gaussian
pulse $u(x,0) = A_0 \exp(-x^2/\sigma^2)$ for several values of
$A_0$ and $\sigma$.  In every case tested
($A_0 = 0.5$--$2.0$, $\sigma = 3$--$10$), the solution resolves
into a finite train of compactons ordered by amplitude, plus a
dispersive debris tail.  The emergent compacton amplitudes and
widths match the universal formulas of
Theorem~\ref{M-thm:universal} in the main paper to within
$0.5\%$, providing additional evidence that the cosine-power
profile acts as an attracting structure for the K(2,2)
evolution.  The number of emergent compactons increases with
the $L^1$ norm of the initial data, suggestive of a
mass-partitioning mechanism loosely analogous to the KdV
initial-value resolution into solitons.

\section{Numerical Method: Detailed Description}
\label{supp:sec:numerical-detail}

This section provides additional detail on the spectrally-filtered
pseudospectral method (PSM) summarized in Appendix~D of the main
paper.  The full systematic development, including comparisons
across filter variants, dissipation operators, and parameter
calibration, appears in~\cite{hyman2026filters}.

\subsection{Motivation for spectral filtering}

The compact support of K$(m,n)$ compactons poses a fundamental challenge
for numerical simulation.  Standard spectral methods suffer from Gibbs
oscillations at the support boundary, where the solution transitions
abruptly from a smooth profile to zero.  These oscillations amplify
through the nonlinear terms and typically cause numerical instability
within a few time units.  The spectrally-filtered approach described
here eliminates this difficulty while preserving the essential
structure of the equation.

\subsection{Joint low-pass filtering (Variant~C)}

The nonlinear terms $(u^m)_x$ and $(u^m)_{xxx}$ generate
high-frequency components through aliasing: if $u$ contains modes up
to wavenumber $k_{\max}$, then $u^m$ contains modes up to
$m \cdot k_{\max}$.  These aliased components corrupt the solution
and seed instabilities.  Applying an exponential low-pass filter
\[
\sigma_{\mathrm{LP}}(k) =
  \exp\!\bigl(-\beta\,(|k|/k_c)^{2p}\bigr), \qquad
  k_c = k_{\max}/q,
\]
with $p = 8$ and fixed cutoff $q = 2$ jointly to both flux terms
(Variant~C of~\cite{hyman2026filters}) removes the aliased content
while preserving the physically relevant low-frequency dynamics.
The constant $\beta$ is chosen so that
$\sigma_{\mathrm{LP}}(k_{\max}) = \epsilon_{\mathrm{mach}}$.

Three filter variants were tested systematically
in~\cite{hyman2026filters}: Variant~A (LP on advection only),
Variant~B (LP on dispersion only), and Variant~C (joint LP on both).
Variant~B is catastrophically unstable for $m \geq 3$, and
Variant~A permits spurious growth through the unfiltered dispersive
term.  Variant~C avoids both failure modes at the cost of a single
additional FFT per time step and is adopted for all simulations
in this paper.

The fixed cutoff $q = 2$ retains modes up to half the Nyquist
frequency for all~$m$.  An adaptive cutoff $q = m$ was tested
and found to starve the interior of resolution for $m \geq 3$,
degrading convergence rates.

\subsection{Biharmonic high-pass dissipation with spatial ramp}

The right-hand side of the filtered equation introduces artificial
dissipation to damp the high-frequency oscillations that spectral
differentiation generates at the compact support boundary.  The
dissipation operator $\mathcal{D}$ uses a biharmonic ($k^4$)
transfer function with a linear ramp in the high-pass band,
\[
\mathcal{D}[u] = r(x,t)\,\mathcal{F}^{-1}\!\bigl[-k^4\,
W_{\mathrm{HP}}(k)\,\hat{u}(k)\bigr],
\]
where $W_{\mathrm{HP}}(k)$ ramps linearly from $0$ at $k_c$ to $1$
at $k_{\max}$, and $r(x,t) = \min(1,\,u_{\mathrm{ramp}}/|u(x,t)|)$
is a smooth spatial ramp with $u_{\mathrm{ramp}} =
0.05\,\max_j u_j$.  This concentrates dissipation near the compacton
boundary while suppressing it in the interior, reducing the peak
shape error by approximately $47\%$ relative to a sharp-step
mask~\cite{hyman2026filters}.

The scaling factor uses the instantaneous maximum wave speed
$c_{\max}(t) = \tfrac{m+1}{2m}\max_j |u_j|^{m-1}$, which removes
the hidden $m$-dependence present in the simpler scaling
$\max|u|^{m-1}$.  Combined with the explicit $\Delta x^3$ factor,
the dissipation term vanishes in the limit $\Delta x \to 0$,
guaranteeing convergence to the unfiltered equation under mesh
refinement.

\begin{sloppypar}
A boundary-layer analysis at the compacton
edge~\cite{hyman2026filters} predicts the universal range
$\delta \in [0.133, 0.167]$; empirical line-search across
$m \in \{2,3,4,5\}$ confirms that $\delta = 0.15$ lies within the
$1\%$-insensitivity band for all tested values of~$m$.
\end{sloppypar}

\subsection{Pseudospectral spatial discretization}

Spatial derivatives are computed pseudospectrally: given
wavenumbers $k_j = 2\pi j/L$, the spectral derivative
$\widehat{(\partial_x u)}_j = ik_j\hat{u}_j$ provides
machine-precision accuracy for all resolved modes.  Third
derivatives are obtained by three applications of the spectral
derivative.  This approach was validated
in~\cite{hyman2026filters} against a fourth-order
finite-difference method (FDM4), with both producing
statistically equivalent collision results.

\subsection{Time integration details}

\begin{sloppypar}
Fixed-step integration was adopted after systematic comparison with
adaptive Runge--Kutta--Fehlberg (RK45) methods: for a given error
tolerance, fixed-step RK4 is roughly $1.4\times$ faster because
error-estimation overhead exceeds the savings from occasional larger
steps in this uniformly stiff problem.
\end{sloppypar}

\subsection{Validation and convergence}

We validate the numerical scheme through the following diagnostics:
\begin{enumerate}
\item \emph{Conservation law monitoring:} The mass $I_0 = \int u\,dx$ is
      conserved to machine precision (relative error at most
      $1.5 \times 10^{-15}$, Table~\ref{supp:tab:kmm-full}) by the
      pseudospectral discretization.
      The energy $I_1 = \int \tfrac{u^{m+1}}{m+1}\,dx$, which is not
      automatically preserved by discretization, provides the meaningful
      diagnostic: it is conserved to relative error at most
      $7.0 \times 10^{-6}$ (Table~\ref{supp:tab:kmm-full}).
\item \emph{Amplitude preservation:} For a single compacton propagating
      without collision, the peak amplitude changes by less than $0.02\%$
      over $t = 100$ time units for all values of~$m$ tested.
\item \emph{Grid convergence:} Convergence analysis
      in~\cite{hyman2026filters} confirms rates consistent with the
      regularity hierarchy: near-second order for $m \leq 3$ and
      regularity-limited floors of order $2/m$ for $m \geq 4$.
\item \emph{Amplitude scaling:} Debris energy increases monotonically with
      collision strength (amplitude ratio $A_2/A_1$ from $0.25$ to $0.75$),
      confirming physical rather than numerical origin.
\end{enumerate}

\section{Painlev\'e test: detailed expansion}
\label{supp:sec:painleve}

This section provides the complete Painlev\'e analysis of the K(m,m)
traveling wave ODE, summarized in
\mainref{subsec:four-proofs}(c) of the main paper.

For K(m,m) with $m > 1$, dominant balance analysis of the traveling
wave ODE near movable
singularities~\cite{ablowitz1981solitons} reveals that solutions
behave as
\begin{equation}\label{supp:eq:painleve-leading}
u \sim a_0(z - z_0)^{-2/(m-1)}
\end{equation}
in the complex plane, where $z_0$ is the singularity location.  The
exponent $-2/(m-1)$ is the \emph{negative} dual of the compacton's
edge exponent $+2/(m-1)$: the same algebraic structure that produces
compact support in real solutions generates movable singularities in
the complex domain.

The quantity $2/(m-1)$ is a positive integer only when $m-1$
divides~$2$, restricting attention to $m = 2$ (giving a double pole,
$u \sim a_0(z - z_0)^{-2}$) and $m = 3$ (giving a simple pole,
$u \sim a_0(z - z_0)^{-1}$).

\paragraph{The case $m = 2$.}
The leading singularity is a double pole.  Substituting a Laurent
series $u = \sum_{k=0}^{\infty} a_k (z - z_0)^{k-2}$ into the
K(2,2) traveling wave ODE and matching powers determines the resonance
structure.  The resonances (values of $k$ where $a_k$ is free) must
occur at non-negative integers for the Painlev\'e property to hold.
For K(2,2), the resonances occur at $k = -1$ and $k = 6$: the
value $k = -1$ is the universal Fuchs index, and $k = 6$ is the
non-trivial resonance where $a_6$ should be free.
Subsequent compatibility conditions at higher order fail:
the system requires $a_k$ to satisfy contradictory constraints,
introducing logarithmic branch cuts of the form
$(z - z_0)^k \log(z - z_0)$.  These logarithmic terms violate the
Painlev\'e property.

\paragraph{Generic $m > 1$ with $m \neq 2, 3$.}
For all other values, including the entire interval $1 < m < 2$
where $2/(m-1) > 2$ is generically irrational, the leading
exponent $-2/(m-1)$ is itself non-integer.  Branch points appear
at the first level of the expansion: the solution
near the singularity involves fractional powers
$(z - z_0)^{-2/(m-1)}$ that cannot be rewritten as poles.
No Laurent series expansion exists, and the Painlev\'e property
fails at the leading order.

\paragraph{The case $m = 3$.}
The leading singularity is a simple pole ($u \sim a_0/(z - z_0)$).
While this is the most favorable case, the resonance conditions at
higher order again produce incompatibilities, generating movable
logarithmic singularities as in the $m = 2$ case.

In summary, movable branch points violate the Painlev\'e property
for every K(m,m) equation with $m > 1$.  The failure is structural
rather than incidental: it reflects the power-law nonlinearity in
the dispersive term $(u^m)_{xxx}$, which generates non-integer
exponents in the singularity expansion whenever $m > 1$.

\section{Open problems: extended versions}
\label{supp:sec:open-problems}

This section presents extended versions of the eight open problems
stated in compressed form in Appendix~\ref{M-app:open-problems}
of the main paper.  Each problem includes detailed motivation,
relevant background, and suggested research programs.

\medskip
\noindent\textbf{Problem~1. Uniqueness of the cosine-power
profile.}
Theorem~\ref{M-thm:universal} derives the three
formulas~\ref{M-eq:alpha}--\ref{M-eq:speed} from the cosine-power
ansatz and the traveling wave ODE.
Proposition~\ref{M-prop:uniqueness} in Appendix~\ref{M-app:extensions}
establishes uniqueness within the class of positive, symmetric,
single-hump solutions by energy-level analysis in the phase plane.
What remains open is the full uniqueness question: whether every
compact-support traveling wave of K(m,m) with prescribed speed
$c > 0$ takes the cosine-power form, without assuming symmetry
or single-hump structure a priori.

A complete proof would rule out asymmetric profiles, multi-hump
solutions, and profiles with flat maxima.  The phase-plane
analysis shows that no such solutions arise on the zero-energy
level set, but solutions with different energy levels or
topological structure are not excluded by the present argument.
Comparison principles adapted to degenerate
diffusion equations~\cite{olver1993applications}, where compact
support arises from analogous mechanisms, may provide the
additional tools needed.

\medskip
\noindent\textbf{Problem~2. Within-class monotonicity and
cross-class geometry.}
The BBM comparison carried out in \mainref{subsec:landscape}
found BBM solitary waves ($N = 3$, smooth sech$^2$ profiles) about
as elastic as K(m,m) compactons ($N = 5$, compact support) by the
amplitude measure, with more debris energy; it therefore neither
confirms nor refutes the expectation that conservation count alone
governs elasticity.  The refined
Conjecture~\ref{M-conj:monotonicity} posits that monotonicity holds
\emph{within} each geometric class, while interaction geometry may
introduce an independent modulation \emph{across} classes.

Three testing programs would sharpen the picture.
Within the compact-support class, equations that possess
compact solitary waves but different conservation counts
could confirm or refute monotonicity among equations sharing
the same tail geometry; candidates include higher-order
K(m,n) variants and the Rosenau--Pikovsky
equations~\cite{rosenau2005phase}.  A parallel investigation
within the smooth-soliton class, testing regularized
long-wave equations or Boussinesq-type
systems~\cite{bona1975initial} with different conservation
counts, would determine whether the BBM--KdV ordering
generalizes.  Equations spanning additional geometric
classes present a third avenue: the Camassa--Holm peakons
(cusped, $N = \infty$) and equations with algebraically
decaying tails offer test cases in which conservation count
and tail behavior can be varied independently.  Quantitative
elasticity-vs-$N$ curves within each class would convert
the conjecture from a qualitative claim into a precise
scaling law.

\medskip
\noindent\textbf{Problem~3. Stability theory for fractional-exponent
compactons.}
Corollary~\ref{M-cor:VK} shows that the mass and the translation
generator $P = \tfrac12 I_4$ increase along the compacton family,
for all $m > 1$.  The standard Grillakis--Shatah--Strauss
framework~\cite{grillakis1987stability,grillakis1990stability} does
not convert this into a stability proof: because the K(m,m)
Hamiltonian has no quadratic part, the second variation of
$\mathcal{H} - cP$ at a compacton has infinitely many negative
directions, for every $m > 1$ (remark after
Corollary~\ref{M-cor:VK} of the main paper).  For the fractional regime
($m > 3$), where compacton edges have H\"{o}lder regularity
$C^{0,\alpha}$ with $\alpha < 1$
(\mainref{subsec:regularity}), stability theory requires
extension to function spaces accommodating limited regularity.

The key analytical challenge is defining an appropriate energy
functional on a Sobolev-type space that includes compacton
profiles with divergent first derivatives at the support
boundary.  Weighted Sobolev spaces with weights vanishing at
the boundary, or Besov spaces matching the H\"{o}lder exponent
$\alpha = 2/(m-1)$, are natural candidates.  Dey and
Khare~\cite{dey1998stability} analyzed the stability of K(2,2)
using variational methods; extending their approach to K(4,4)
and higher would require controlling error terms that diverge at
the compacton edge.  The cusp case $m = 3$ presents a
codimension-one transition
where classical and distributional methods meet, and may be the
most tractable starting point.  The shelf-wave analysis of
\mainref{sec:shelf-waves} offers a complementary approach:
the smooth solitary waves on positive backgrounds are amenable to
standard stability methods, and the singular limit
(Theorem~\ref{M-thm:singular-limit}) could provide a
regularization pathway for compacton stability by studying
the $a \to 0^+$ limit of the smooth stability problem.

\medskip
\noindent\textbf{Problem~4. Interaction geometry and collision
elasticity.}
Whether solitary wave tail behavior modulates collision elasticity
independently of conservation law count is open.  BBM solitons,
with smooth exponentially decaying tails, interact gradually and
perturbatively and collide almost elastically
($\mathcal{E} \approx 0.9997$) with only three conservation laws.
K(m,m) compactons, whose profiles vanish identically beyond a finite
support, interact through abrupt contact and reach comparable
elasticity with five conservation laws, while shedding less debris
energy (\mainref{subsec:landscape}).

The central question is whether interaction geometry can be
quantified through a single parameter (the tail decay rate
$\kappa$ in $u(x) \sim e^{-\kappa|x|}$ for smooth solitons,
or the regularity exponent $\alpha$ at the support boundary
for compactons) that correlates predictably with collision
elasticity at fixed conservation count.  The K(m,m) family
itself provides a natural testing ground, since the edge
exponent $\alpha = 2/(m-1)$ varies continuously with $m$
while the conservation count remains fixed at five.  A
related question concerns intermediate geometries: do they
produce intermediate elasticity?  The Camassa--Holm peakons
(cusped at the crest but exponentially decaying in the tails)
and equations with algebraic tail decay
($u \sim |x|^{-\gamma}$) occupy the geometric spectrum
between smooth and compact.  Systematic collision
measurements across these families would test whether a
smooth interpolation between the compact-support and
smooth-soliton regimes exists and would provide the empirical
foundation for a unified elasticity theory incorporating both
conservation structure and interaction geometry.

\medskip
\noindent\textbf{Problem~5. Master formulas for the full K(m,n)
family.}
The universal formulas of Theorem~\ref{M-thm:universal} apply to
the diagonal subfamily $n = m$.
Proposition~\ref{M-prop:exclusivity} in Appendix~\ref{M-app:extensions}
shows that the cosine-power ansatz is \emph{exclusive} to the
diagonal: for K(m,n) with $m \neq n$ and $m > 1$, the mismatch
between convection and dispersion powers renders the ansatz
inconsistent.  Off-diagonal compactons require
elliptic-function or more general transcendental profiles.

Within the broader $\mathcal{C}_N(m,a,b)$ framework of Rosenau
and Zilburg~\cite{rosenau2018compactons}, additional cosine-power
families arise for $a \neq 0$.  Whether a unified two-parameter
master formula $\alpha(m,n)$, $\ell(m,n)$, $c(m,n,A)$ can describe
the off-diagonal K(m,n) compactons remains open, as does the
behavior near the integrable boundary at K(2,1) and
K(3,1)~\cite{heredero2020nonexistence}.  The absolute-value variant
$|K|(p,p)$~\cite{garralon2023robustness} enlarges the parameter
space to non-integer exponents, and the square-wave limit
$m \to \infty$ (where $\alpha \to 0^+$) represents a natural
boundary for the theory.

\medskip
\noindent\textbf{Problem~6. Approximate integrability frameworks.}
The classification of Hern\'andez Heredero et
al.~\cite{heredero2020nonexistence} rules out exact Lax
pairs for K(m,m) with real $m > 1$ but does not exclude
approximate analogues.  Two directions appear promising.
First, constructing a formal asymptotic expansion of a Lax-type
operator by deforming from K(2,1)~=~KdV toward K(2,2) through the
family $u_t + (u^2)_x + (u^{1+\epsilon})_{xxx} = 0$.
Second, seeking a Lax pair involving nonlocal or pseudo-differential
operators, hinted at by the structure of the nonlocal conservation
law~\ref{M-eq:momentum}.
A complementary numerical approach uses physics-informed neural
networks~\cite{raissi2019physics} to learn approximate conservation
laws directly from K(m,m) solution data.  Xing and
Charalampidis~\cite{xing2024learning} have demonstrated that
neural networks can learn traveling wave solutions of nonlinear
dispersive equations, suggesting that data-driven methods may
complement the analytical toolkit.
More broadly, perturbed NLS equations of the form
$iu_t + u_{xx} + |u|^2 u = \epsilon f(u)$ possess finitely many
approximate conservation laws whose count depends on the
perturbation structure~\cite{kivshar1989dynamics}; the
constraint-deficit framework predicts that such systems should
exhibit quasi-elastic collisions with debris proportional to the
conservation deficit, providing a natural test case beyond K(m,m).

\medskip
\noindent\textbf{Problem~7. Experimental observation of compactons.}
Three decades of theoretical development have not yet produced
a controlled laboratory observation of compacton propagation
and collision.  The most developed candidate is the granular chain
with Hertzian contact
interactions~\cite{nesterenko1983propagation,nesterenko2001dynamics},
where the power-law force law $F \propto \delta^{3/2}$ naturally
generates nonlinear dispersion.  Nonlinear optical media with
engineered dispersion profiles~\cite{agrawal2013nonlinear} offer
a second platform where ultrafast measurements could resolve
compact pulse profiles and test the quasi-elastic prediction.

\medskip
\noindent\textbf{Problem~8. Multi-compacton collision asymptotics.}
The collision theory of \mainref{subsec:quasi-elastic}
addresses pairwise interactions.  When $N$ compactons
collide, the five conservation laws must constrain
$2N$ parameters plus a debris field, growing linearly
with~$N$ while the constraint count remains fixed.
Numerical experiments suggest that well-separated collisions
factor into sequential pairwise
events~\cite{rosenau1993compactons}, but simultaneous
three-body collisions have not been studied systematically.
Does debris accumulate through repeated collisions?
The long-time asymptotics of $N$-compacton initial data would
test whether K(m,m) dynamics approaches statistical
equilibrium or remains dominated by coherent structures,
connecting to the broader theory of weak
turbulence~\cite{whitham1974linear}.

\section{Supplementary Figures}
\label{supp:sec:figures}

The following three figures from the traveling wave analysis
(\mainref{sec:shelf-waves} of the main paper) are collected here
for reference.

\FloatBarrier
\begin{figure}[!htbp]
\centering
\includegraphics[width=\textwidth]{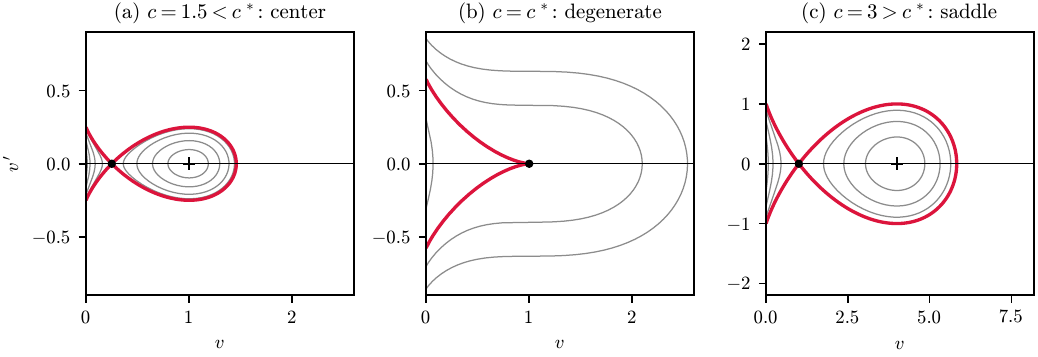}
\caption{Phase portraits for the K(2,2) traveling wave ODE in the
$(v,v')$ plane at shelf height $a = 1$, computed as level sets of the
energy $E = \tfrac12 (v')^2 + V(v)$ (Equation~(\ref{M-eq:shelf-energy})
of the main paper).  The critical speed $c^* = 2a = 2$ separates three
regimes; the thick curve is the level set through the equilibrium
marked by the dot, and $+$ marks a center.
(a)~$c = 1.5 < c^*$: nested periodic orbits around the center at
$v_0 = a^2$, bounded by the separatrix of a saddle at $(c-a)^2$.
(b)~$c = c^*$: the equilibrium at $v_0 = a^2$ is degenerate (an
inflection point of $V$).  No closed orbit exists, and the critical
level set has a cusp at $v_0$ with both branches running to $v = 0$
(Remark~\ref{M-rem:algebraic-soliton} of the main paper).
(c)~$c = 3 > c^*$: homoclinic orbit through the saddle at $v_0 = a^2$,
enclosing the center at $(c-a)^2$, with crest excursion beyond it
(see Equation~(\ref{M-eq:wpm}) of the main paper).}
\label{supp:fig:phase-portraits}
\end{figure}

\FloatBarrier
\begin{figure}[!htbp]
\centering
\pgfplotstableread[col sep=comma]{period_energy.csv}\perioddata
\begin{tikzpicture}
\begin{axis}[
  width=11cm, height=7cm,
  xlabel={Energy $E$},
  ylabel={Period $T(E)$},
  title={Period--energy relation for K(2,2) cnoidal waves
    ($a = 2$, $c = 3$)},
  xmin=0, xmax=0.55,
  ymin=10, ymax=22,
  grid=major,
  grid style={gray!30},
  legend style={at={(0.03,0.97)}, anchor=north west,
    font=\small},
]

\addplot[blue, very thick, mark=none] table[x=E, y=T] {\perioddata};
\addlegendentry{$T(E)$ (quadrature)}

\addplot[red, dashed, thick, domain=0:0.55]
  {12.5664};
\addlegendentry{$T_0 = 2\pi/\omega_0 \approx 12.57$}

\draw[gray, dashed] (axis cs:0.5,10) -- (axis cs:0.5,22)
  node[pos=0.85, right, font=\small] {$E_{\mathrm{sep}}$};

\end{axis}
\end{tikzpicture}
\caption{Period as a function of energy for the K(2,2)
one-parameter family of cnoidal waves on background $a = 2$
with speed $c = 3 < c^* = 4$.
The period approaches $T_0 = 2\pi/\omega_0$ (dashed line)
in the small-amplitude limit and diverges as the energy
approaches the separatrix value $E_{\mathrm{sep}}$,
where the periodic orbit merges with the homoclinic orbit.
This monotone $T(E)$ curve parametrizes the entire
one-parameter periodic family, in contrast with the
$N$-parameter tori of Lax's KdV theory.}
\label{supp:fig:period-energy}
\end{figure}

\FloatBarrier
\begin{figure}[!htbp]
\centering
\begin{tikzpicture}
\begin{axis}[
  width=10cm, height=7cm,
  xlabel={Shelf height $a$},
  ylabel={Speed $c$},
  title={Wave-type bifurcation diagram for K(2,2)},
  xmin=0, xmax=3,
  ymin=0, ymax=7,
  grid=major,
  grid style={gray!30},
  legend style={at={(0.03,0.97)}, anchor=north west,
    font=\small},
]

\addplot[red, very thick, domain=0:3, samples=100]
  {2*x};
\addlegendentry{$c^* = 2a$}

\node[font=\large] at (axis cs:2.2,2.0) {Periodic};
\node[font=\small,gray] at (axis cs:2.2,1.4)
  {(cnoidal waves)};

\node[font=\large] at (axis cs:0.8,5.0) {Solitary};
\node[font=\small,gray] at (axis cs:0.8,4.4)
  {(exponential tails)};

\draw[black, ultra thick] (axis cs:0,0) -- (axis cs:0,7);
\node[font=\small, rotate=90, anchor=north] at (axis cs:0.06,2.6)
  {Compactons ($a = 0$)};

\node[font=\small, rotate=33, anchor=south, red!70!black]
  at (axis cs:1.6,3.6) {solitary branch ends};

\end{axis}
\end{tikzpicture}
\caption{Bifurcation diagram for K(2,2) traveling waves in the
$(a, c)$ parameter plane.  The critical curve
$c^* = 2a$ (red) separates the solitary wave region
($c > c^*$, exponential tails) from the periodic wave region
($c < c^*$, cnoidal waves).  The left boundary $a = 0$
corresponds to compactons with compact support and finite
regularity $C^1$.  On the critical curve itself the solitary-wave
branch terminates: the amplitude vanishes as $c \to {c^*}^+$ and no
solitary wave exists at $c = c^*$ (Remark~\ref{M-rem:algebraic-soliton}
of the main paper).  For general
K(m,m), the critical curve is $c^* = m\,a^{m-1}$.}
\label{supp:fig:bifurcation}
\end{figure}

\section{Reproducibility}
\label{supp:sec:reproducibility}

\subsection{Software and environment}

All K($m$,$m$) simulations are implemented in Python~3.11 using
NumPy~1.24 and SciPy~1.11.  BBM simulations additionally use
SciPy's \texttt{solve\_ivp} with the RK45 integrator.  Symbolic
verification of conservation law identities and universal formulas
uses SymPy~1.13.  Figures are generated with Matplotlib~3.8.
All code has been tested on Python~3.10 (Google Colab)
and Python~3.11 (local, Ubuntu 22.04).  A \texttt{requirements.txt}
file is included in the repository for reproducible environment setup.

The computational notebook
\texttt{production\_multi\_m\_D3v4.ipynb} (included in the
repository) runs a finite-difference cross-check of the
K($m$,$m$) collisions ($N = 1024$, $q = 3$, $\delta = 0.5$) on Google Colab (Python~3.10,
typical runtime: 45 minutes on a T4 GPU instance, or 3 hours
on a CPU-only instance).  It is not the PSM configuration of
Table~\ref{supp:tab:kmm-full}, and its values differ from that table.

\subsection{Code repository}

Source code, data files, and Jupyter notebooks are
available at:
\begin{center}
\texttt{https://github.com/machyman/kmm-compactons}
\end{center}
A tagged release with a persistent DOI (via Zenodo) will be
created at the time of publication.
The repository contains:
\begin{itemize}
\item \texttt{papers/hyman2026structure/src/kmm\_compactons/}: the
  pseudospectral (PSM) K($m$,$m$) solver behind Section~\ref{M-subsec:quasi-elastic}
  and Figure~\ref{M-fig:collision-k22} of the main paper
\item \texttt{paper/}: figure sources for Figures~\ref{M-fig:roadmap}--\ref{M-fig:collision-k22}
  and~\ref{M-fig:wave-taxonomy}--\ref{M-fig:singular-limit} of the main paper and
  Figure~\ref{supp:fig:phase-portraits}, with their data in \texttt{paper/} and \texttt{paper/figdata/}
\item \texttt{src/kmm\_solver.py}: K($m$,$m$) regularized
  finite-difference solver with dual spectral filtering, used by the
  cross-check notebook
\item \texttt{bbm\_solver.py}: BBM pseudospectral solver
\item \texttt{collision\_analysis.py}: Post-collision amplitude
  extraction and debris spectral analysis
\item \texttt{shelf\_wave\_profiles.py}: Exact shelf-wave profile
  computation from quadrature formulas
\item \texttt{symbolic\_verification.py}: SymPy scripts for
  Appendix~C verification
\item \texttt{production\_multi\_m\_D3v4.ipynb}: Colab-compatible
  notebook running the finite-difference cross-check described above
\item \texttt{tests/}: Validation test suite (single-compacton
  propagation, conservation law verification)
\end{itemize}

\subsection{Data files}

The repository's data files:
\begin{itemize}
\item \texttt{measurements\_D3v4\_N1024.json}: the finite-difference
  cross-check run of the notebook above (all four $m$ values, 30+ measured
  quantities each), not the PSM data of Table~\ref{supp:tab:kmm-full}
\item \texttt{BBM\_Collision\_Results.json}: the BBM collision data of
  Tables~\ref{supp:tab:bbm-fixed-ratio}--\ref{supp:tab:bbm-large}
  (16 rows, 14 distinct configurations; $(2, 1)$ and $(10, 5)$ listed twice)
\end{itemize}

\bibliographystyle{plainnat}
\bibliography{Lax_Centennial_Paper_FINAL}